\newtheorem{theorem}{Theorem}
\newtheorem*{theorem*}{Theorem}
\newtheorem{lemma}[theorem]{Lemma}
\newtheorem{proposition}[theorem]{Proposition}
\newtheorem{claim}[theorem]{Claim}
\newtheorem{corollary}[theorem]{Corollary}
\newtheorem{maintheorem}{Theorem}
\theoremstyle{exercise}
\theoremstyle{definition}
\newtheorem*{definition*}{Definition}
\newtheorem*{lemma*}{Lemma}
\numberwithin{equation}{section}
\numberwithin{theorem}{section}
\newcommand{\intav}[1]{\mathchoice {\mathop{\vrule width 6pt height 3 pt depth  -2.5pt
\kern -8pt \intop}\nolimits_{\kern -6pt#1}} {\mathop{\vrule width
5pt height 3  pt depth -2.6pt \kern -6pt \intop}\nolimits_{#1}}
{\mathop{\vrule width 5pt height 3 pt depth -2.6pt \kern -6pt
\intop}\nolimits_{#1}} {\mathop{\vrule width 5pt height 3 pt depth
-2.6pt \kern -6pt \intop}\nolimits_{#1}}}
\newcommand{\intavl}[1]{\mathchoice {\mathop{\vrule width 6pt height 3 pt depth  -2.5pt
\kern -8pt \intop}\limits_{\kern -6pt#1}} {\mathop{\vrule width 5pt
height 3  pt depth -2.6pt \kern -6pt \intop}\nolimits_{#1}}
{\mathop{\vrule width 5pt height 3 pt depth -2.6pt \kern -6pt
\intop}\nolimits_{#1}} {\mathop{\vrule width 5pt height 3 pt depth
-2.6pt \kern -6pt \intop}\nolimits_{#1}}}
\newcommand{\R}{\mathbb{R}}
\newcommand{\N}{\mathbb{N}}
\newcommand{\Z}{\mathbb{Z}}
\newcommand{\eps}{\varepsilon}
\renewcommand{\P}[1]{{\mathbb{P}}\left[{#1}\right]}
\newcommand{\CondP}[2]{{\mathbb{P}}\left[{#1}\middle\vert{#2}\right]}
\newcommand{\CondE}[2]{{\mathbb{E}}\left[{#1}\middle\vert{#2}\right]}
\newcommand{\Esub}[2]{{\mathbb{E}}_{#1}\left[{#2}\right]}
\newcommand{\E}[1]{{\mathbb{E}}\left[{#1}\right]}
\newcommand{\symdiff}{\triangle}
\newcommand{\bnd}{\mathrm{{\bf bnd}}}
\newcommand{\pb}{\Pi}
\newcommand{\sub}[1]{\mathrm{Sub}_{#1}}
\newcommand{\lampbnd}{{\mathrm{\bf conf}}}
\newcommand{\finconf}{C_C}
\newcommand{\lamps}{{\mathrm{conf}}}
\newcommand{\cA}{{\mathcal{A}}}
\newcommand{\Ent}[1]{H\left({#1}\right)}
\newcommand{\mup}{\eta}
\newcommand{\half}{{\textstyle \frac12}}
\newcommand{\mS}{\mathcal{S}}
\newcommand{\irs}{\mathcal{M}}
\DeclareMathOperator{\supp}{supp}
\begin{document}

\title[]{Furstenberg entropy realizations for virtually free groups
  and lamplighter groups}

\author[Yair Hartman]{Yair Hartman}
\address{Weizmann Institute of Science, Faculty of Mathematics and Computer Science, POB 26, 76100, Rehovot, Israel.}
\email{yair.hartman, omer.tamuz@weizmann.ac.il}

\author[Omer Tamuz]{Omer Tamuz}

\thanks{Y.\ Hartman is supported by the European Research Council,
  grant 239885.  O.\ Tamuz is supported by ISF grant 1300/08, and is a
  recipient of the Google Europe Fellowship in Social Computing. This
  research is supported in part by this Google Fellowship.}
\date{\today}

\begin{abstract}
  Let $(G,\mu)$ be a discrete group with a generating probability
  measure. Nevo shows that if $G$ has property (T) then there exists
  an $\eps>0$ such that the Furstenberg entropy of any
  $(G,\mu)$-stationary ergodic space is either zero or larger than
  $\eps$.

  Virtually free groups, such as $SL_2(\Z)$, do not have property (T),
  and neither do their extensions, such as surface groups. For these,
  we construct stationary actions with arbitrarily small, positive
  entropy. This construction involves building and lifting spaces of
  lamplighter groups. For some classical lamplighters, these spaces
  realize a dense set of entropies between zero and the Poisson
  boundary entropy.
\end{abstract}

\maketitle
\tableofcontents
\section{Introduction}
Let $G$ be a countable discrete group, and let $\mu$ be a generating probability
measure. A $G$-space $X$ with a probability measure $\nu$ is called a
$(G,\mu)$-stationary space if $\sum_g\mu(g)g\nu = \nu$. Hence a
stationary measure $\nu$ is not in general $G$-invariant, but it is
invariant ``on average'', when the average is taken over $\mu$. An
important invariant of stationary spaces is the Furstenberg
entropy~\cite{furstenberg1963noncommuting}, given by
\begin{align*}
  h_\mu(X,\nu) = \sum_{g \in G}\mu(g)\int_X-\log\frac{d\nu}{dg\nu}(x)dg\nu(x).
\end{align*}

Despite the fact that stationary spaces have been studied for several
decades now, few examples are known, and the theory of their structure
and properties is still far from
complete~\cite{furstenberg2009stationary}. For example, it is in
general not known which Furstenberg entropy values they may take; this
problem is called the {\em Furstenberg entropy realization
  problem}~\cite{nevo2000rigidity, bowen2010random}. More
specifically, it is not known which groups have an entropy gap:
\begin{definition*}
  $(G,\mu)$ has an {\em entropy gap} if it admits stationary spaces of
  positive Furstenberg entropy, and if there exists an $\eps>0$ such
  that the Furstenberg entropy of any ergodic $(G,\mu)$-stationary
  space is either zero or greater than $\eps$.

  A group $G$ has an {\em entropy gap} if $(G,\mu)$ has an entropy gap
  for every generating measure $\mu$ with finite entropy.
\end{definition*}

Nevo~\cite{nevo2003spectral} shows that any group with Kazhdan's
property (T) has an entropy gap.  We show that a large class of
discrete groups without property (T) do not have an entropy gap.
\begin{maintheorem}
  \label{thm:main-realization}
  Let $G$ be a finitely generated virtually free group. Let $\mu$ be a
  generating measure on $G$, with finite first moment. Then $(G,\mu)$
  does not have an entropy gap.

  In particular, any finitely generated virtually free group does not
  have an entropy gap.
\end{maintheorem}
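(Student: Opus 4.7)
My plan is to exhibit, for every $\eps>0$, an ergodic $(G,\mu)$-stationary space whose Furstenberg entropy lies in $(0,\eps)$. The argument proceeds in two stages: first building stationary spaces of arbitrarily small positive entropy for a lamplighter group $L = A \wr \Z$, with $A$ a finite abelian group, and then transferring these to $G$ via a surjection. The transfer stage rests on the elementary fact that if $\pi\colon G \to H$ is a surjection and $\nu$ is $(H,\pi_*\mu)$-stationary, then $\nu$ is also $(G,\mu)$-stationary for the action through $\pi$, and the Furstenberg entropies agree because the Radon--Nikodym derivatives $d\nu/dg\nu$ depend only on $\pi(g)$.

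For the building stage, the classical random-walk boundary of $(L,\mu_L)$ records the asymptotic lamp configuration in $A^\Z$ and is a $(L,\mu_L)$-stationary space of a fixed positive entropy. To obtain smaller entropies I would introduce a one-parameter family of $L$-equivariant coarsenings of this boundary, for example by observing each lamp only with probability $p \in (0,1]$, or by passing each lamp state through a symmetric $A$-valued noisy channel of parameter $p$. Equivariance preserves stationarity, and a direct computation of $h_\mu$ from the definition should give continuous dependence on $p$, tending to $0$ as $p\to 0$ while remaining positive for $p>0$. Choosing a dense sequence of parameters then realizes a dense set of entropies in $(0, h_{\mathrm{PB}})$, matching the ``classical lamplighter'' claim of the abstract.

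For the transfer stage, a free group $F_k$ with $k \geq 2$ surjects onto $L$, with $\pi_*\mu$ generating and of finite first moment (since $\pi$ is Lipschitz on Cayley graphs), so the theorem follows immediately in that case. The main obstacle is that a general virtually free $G$, such as $SL_2(\Z)$, need not surject onto any $A\wr\Z$, since its abelianization can be finite. To handle these groups I would replace $\Z$ by the Bass--Serre tree $T$ on which $G$ acts with finite vertex stabilizers, and build a ``tree-lamplighter'' configuration space $A^{E(T)}$ on which $G$ acts by permuting edges and acting on lamps. The stationary measure comes from the asymptotic lamp configuration of the $\mu$-random walk, which settles at infinity because the induced walk on $T$ has positive drift under the finite first moment hypothesis. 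Applying the same $p$-coarsening as in the first stage should then tune the entropy continuously down to zero. The delicate point, which I expect to be the main technical obstacle, is verifying continuity of $h_\mu$ in $p$ and ergodicity of each coarsened action, together with the claim that the construction still produces nontrivial stationary spaces once one uses the full virtually free symmetry and not merely a free subgroup.
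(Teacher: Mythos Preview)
Your proposal has two genuine gaps. First, the target $A\wr\Z$ can fail outright: if $\mu$ is symmetric on $F_k$, then through any surjection $\pi$ the projection of $\pi_*\mu$ to $\Z$ is a centered finite-first-moment walk, hence recurrent; each lamp is visited infinitely often, the limit configuration never stabilizes, and the boundary you propose to coarsen is trivial (indeed for such measures $A\wr\Z$ is Liouville, so \emph{every} $(A\wr\Z,\pi_*\mu)$-stationary space has entropy zero and nothing can be transferred). The paper surjects onto $(\Z/2\Z)\wr\Z^3$ precisely because any generating walk on $\Z^3$ is transient, guaranteeing a nontrivial limit configuration boundary for every admissible $\mu$; this forces the free subgroup to have rank at least $4$, which is easily arranged. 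Second, and more seriously, your Bass--Serre ``tree-lamplighter'' plan for the general virtually free case is not a proof but a programme: defining the space, producing a nontrivial stationary measure on it, and tuning the entropy while preserving ergodicity are each substantial and you offer no mechanism for any of them. The paper bypasses this entirely via a different device. It passes to a \emph{normal} finite-index free subgroup $\Gamma\lhd G$ (the normal core of any finite-index free subgroup, again free of rank $\geq 4$), shows the hitting measure $\theta$ on $\Gamma$ still has finite first moment (Lemma~\ref{thm:theta-is-finite-first-moment}), builds the small-entropy spaces for $(\Gamma,\theta)$ as Bowen spaces attached to $\Gamma$-invariant random subgroups $\lambda_{p,m}$ pulled back from the lamplighter quotient, and then lifts each $\Gamma$-IRS to a $G$-IRS $\eta*\lambda_{p,m}$ by averaging over $G/\Gamma$. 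An Abramov-type identity $h_\theta=[G:\Gamma]\,h_\mu$ together with a decomposition over the conjugate hitting measures $\theta_g$ (Lemma~\ref{lemma:lifting}) then controls the $\mu$-entropy of the lifted Bowen space; the reason the paper proves the uniform Proposition~\ref{lem:lamplighter-uniform-realization} rather than just Theorem~\ref{thm:lamplighter-realization} is exactly that the \emph{same} IRS family must realize small entropy for all the $\theta_g$ simultaneously.

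As a side note, your ``observe each lamp with probability $p$'' coarsening, once made precise, is the Bowen space for the i.i.d.\ percolation IRS at density $1-p$; the paper explicitly remarks that it cannot control the entropy of that construction and instead uses a long-range correlated percolation (Lemma~\ref{lemma:long-range-percolation}) designed so that large finite sets are, with high probability, either entirely open or entirely closed.
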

A virtually free group is a group that has a free group as a finite
index subgroup. In particular, $SL_2(\Z)$ is virtually free, and so
does not have an entropy gap.

L.\ Bowen~\cite{bowen2010random} introduces a new example of
stationary spaces based on invariant random subgroups (see also
earlier work by Kaimanovich~\cite{kaimanovich2005amenability}). We
shall refer to these spaces as {\em Bowen spaces}.  He uses some
insights into their entropy to show that, for free groups with the
uniform measure over the generators, any entropy between zero and the
Poisson boundary entropy can be realized. We also realize entropies
using Bowen spaces: to prove Theorem~\ref{thm:main-realization}, we
construct Bowen spaces of lamplighter groups, and lift them to Bowen
spaces of virtually free groups. Using a recent result of Hartman,
Lima and Tamuz~\cite{hartman2012abramov} (see
also~\cite{kaimanovich1992discretization}) that relates the entropies
of the actions of groups and their finite index subgroups, we control
the entropies of the lifted spaces, and show that they can be made
arbitrarily small.

A natural stationary space of lamplighter groups is the limit
configuration boundary (see Section~\ref{sec:lamp-config}), which, in
some classical lamplighters, has been shown to coincide with the
Poisson boundary (for example, this is known for $(\Z/2\Z \wr
\Z^d,\mu)$ for $d\geq 5$ and $\mu$ with finite third moment;
see~\cite{erschler2011poisson,kaimanovich2000poisson,karlsson2007poisson}).
We denote by $h_\lamps(G,\mu)$ the Furstenberg entropy of the
configuration boundary.  Our construction of Bowen spaces for
lamplighter groups yields the following realization result.
\begin{maintheorem}
  \label{thm:lamplighter-realization}
  Let $G=L \wr \Gamma$ be a finitely generated discrete lamplighter
  with base group $\Gamma$ and lamps in $L$. Let $\mu$ be a generating
  measure on $G$ with finite entropy, and such that its projected
  random walk on $\Gamma$ has a trivial Poisson boundary. Then there
  exists a dense set $H \subseteq [0,h_\lamps(G,\mu)]$ such that for
  each $h \in H$ there exists an ergodic $(G,\mu)$-stationary space
  with Furstenberg entropy $h$.
\end{maintheorem}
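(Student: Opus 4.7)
The plan is to construct, for each rational $\beta \in [0,1]$, an ergodic $(G,\mu)$-stationary space whose Furstenberg entropy equals $\beta \cdot h_\lamps(G,\mu)$. Since rationals of the form $k/n$ are dense in $[0,1]$, the set $H$ of realized entropies will be dense in $[0,h_\lamps(G,\mu)]$, proving the theorem. Every space in my family will arise as a Bowen space attached to an invariant random subgroup (IRS) of $G$ built from a finite-index subgroup of $\Gamma$ together with the lamp structure of $G$.

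Concretely, fix $\Lambda \leq \Gamma$ of index $n$ and a subset $T_0 \subseteq \Gamma/\Lambda$ of size $k$, and let $\tilde T_0 \subseteq \Gamma$ be the union of the corresponding $k$ cosets (a set of density $k/n$ in $\Gamma$). Define the random subgroup
\[
H_\sigma \;=\; \bigoplus_{t \in \Gamma \setminus \sigma \tilde T_0} L \;\leq\; \bigoplus_\Gamma L \;\leq\; G,
\]
where $\sigma$ is uniform in $\Gamma/\Lambda$. A direct computation in the wreath product shows that conjugation by $(f,\tau) \in G$ sends $H_\sigma$ to $H_{\tau\sigma}$, and since the uniform law on $\Gamma/\Lambda$ is invariant under left translation by $\Gamma$, the law $\theta_{\Lambda,T_0}$ of $H_\sigma$ is $G$-conjugation invariant, i.e.\ an IRS. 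Ergodicity of $\theta_{\Lambda,T_0}$ follows from transitivity of $\Gamma$ on $\Gamma/\Lambda$.

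I then apply Bowen's construction to $\theta_{\Lambda,T_0}$ to obtain a $(G,\mu)$-stationary Bowen space $X_{\Lambda,T_0}$: a Poisson bundle whose fiber over $H \in \sub{G}$ is the Poisson boundary of the $\mu$-walk projected to $H \backslash G$. For $H = H_\sigma$, the coset space $H_\sigma \backslash G$ is naturally parametrized by pairs (lamp configuration on $\sigma \tilde T_0$, element of $\Gamma$), and the projected walk records only the $\Gamma$-position together with the lamp changes inside $\sigma \tilde T_0$. Under the hypothesis that the projected walk on $\Gamma$ has trivial Poisson boundary, the Poisson boundary of this quotient walk is the limit configuration boundary restricted to $\sigma \tilde T_0$. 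Its Furstenberg entropy equals $\frac{k}{n} h_\lamps(G,\mu)$, obtained by repeating on the density-$k/n$ set $\sigma \tilde T_0$ the computation that defines $h_\lamps$ on all of $\Gamma$. Averaging over the finitely many $\sigma$ preserves this value, so Bowen's entropy formula yields $h_\mu(X_{\Lambda,T_0}) = \frac{k}{n} h_\lamps(G,\mu)$.

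The principal obstacle is the entropy identification in the previous paragraph: proving that the Poisson boundary of the quotient walk on $H_\sigma \backslash G$ is exactly the partial configuration boundary on $\sigma \tilde T_0$ and that its Furstenberg entropy scales linearly with $k/n$, rather than being only bounded by it. This adapts the proof that $h_\lamps$ is realized by the full limit configuration boundary to the partial setting; the triviality of the projected walk on $\Gamma$ is precisely what ensures that the boundary behavior of the quotient walk is governed by the asymptotic configuration on $\sigma \tilde T_0$ alone, with no extra contribution from the $\Gamma$-coordinate. Once this is in place, letting $k/n$ range over the rationals in $[0,1]$ by varying $\Lambda$ and $T_0$ produces the required dense set of realized entropies.
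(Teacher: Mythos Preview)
Your proposal has a genuine gap at precisely the step you flag as the ``principal obstacle,'' and that obstacle is real rather than a routine adaptation. Two issues:

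\textbf{The entropy of the fiber is not the partial configuration entropy.} The fiber over $H_\sigma$ in Bowen's construction is the Poisson boundary of the induced Markov chain on $H_\sigma\backslash G$, not the partial limit configuration boundary on $\sigma\tilde T_0$. The latter is certainly a factor of the former, but equality is not known in this generality. This is exactly why the paper introduces the auxiliary boundary $(B_\ell,\nu_\ell)$, defined as the join of all these quotient Poisson boundaries, and works with $h_\ell(G,\mu)\geq h_\lamps(G,\mu)$ rather than with $h_\lamps$ directly. Even granting your identification, the claim that the Furstenberg entropy of the partial configuration on a density-$k/n$ set equals $\tfrac{k}{n}\,h_\lamps(G,\mu)$ is not a matter of ``repeating the computation'': $h_\lamps$ is a Kullback--Leibler divergence of measures on $L^\Gamma$, and KL divergence is not additive over marginals. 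The paper in fact explicitly remarks (end of Section~\ref{sec:lamplighter-intro}) that it could not establish such a linear scaling even in the i.i.d.\ percolation case, and this is why it resorts to the limiting construction via $\lambda_{p,m}$ and Lemmas~\ref{lem:close-to-full}--\ref{lem:close-to-base}.

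\textbf{The finite-index subgroups may not exist.} Your IRS is built from a finite-index subgroup $\Lambda\leq\Gamma$, but the only hypothesis on $\Gamma$ is that the projected walk has trivial Poisson boundary, i.e.\ that $\Gamma$ is amenable. There are finitely generated infinite simple amenable groups (e.g.\ the commutator subgroups of topological full groups of minimal subshifts), which have no proper finite-index subgroups at all; your construction produces nothing there. The paper avoids this by building its percolations from F{\o}lner sets (Lemma~\ref{lemma:long-range-percolation}), which exist in any amenable group.

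In short, the paper's route is not an unnecessarily elaborate detour: it is designed to sidestep exactly the two unproved assertions your argument rests on.
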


The property of not having an entropy gap is closed under group
extensions in the following sense. If $G\xrightarrow{\varphi}Q$ is a
surjective group homomorphism, and if $(Q,\varphi_*\mu)$ does not have
an entropy gap for some generating measure $\mu$ on $G$, then
$(G,\mu)$ does not have an entropy gap. Indeed, if $(X,\nu)$ is a
$(Q,\varphi_*\mu)$-stationary space, then it is also
$(G,\mu)$-stationary for the $G$ action factored through $\varphi$,
and furthermore
\begin{align*}
  h_{\mu}(X,\nu) = h_{\varphi_*\mu}(X,\nu).
\end{align*}
The following result is therefore a consequence of
Theorems~\ref{thm:main-realization}
and~\ref{thm:lamplighter-realization}.
\begin{maintheorem}
  Let $(G,\mu)$ be a discrete group with a generating measure such
  that there exists a surjective group homomorphism
  $G\xrightarrow{\varphi}Q$, where $(Q,\varphi_*\mu)$ is either
  virtually free with finite first moment, or a lamplighter satisfying
  the conditions of Theorem~\ref{thm:lamplighter-realization} and such
  that $h_\lamps(Q,\varphi_*,\mu)>0$. Then $(G,\mu)$ has no entropy
  gap.
\end{maintheorem}
In particular, this holds for surface groups with finite
first moment measures, as these are extensions of free groups, and
since finite first moment measures are pushed forward to finite first
moment measures (Claim~\ref{lem:quotient-preserves-ffm}). 

\subsection{Related results}
We conclude this introduction with a short survey of previous work on
Furstenberg entropy realization.

The Furstenberg entropy of any stationary space is bounded from above
by the entropy of the Poisson
boundary~\cite{furstenberg1971random}. Furthermore, Kaimanovich and
Vershik~\cite{kaimanovich1983random} show that when $\Ent{\mu}$, the
entropy of $\mu$, is finite, then the Furstenberg entropy of the
Poisson boundary is equal to $h_{RW}(G,\mu)$, the random walk entropy
of $\mu$, defined by
\begin{align*}
  h_{RW}(G,\mu) =  \lim_{n \to \infty}\frac{1}{n}\Ent{\mu^n}.
\end{align*}

Little is known about which entropy values between $0$ and
$h_{RW}(G,\mu)$ can be realized by ergodic stationary spaces. Note
that any entropy in this range can be realized with a non-ergodic
space that is a convex linear combination of the Poisson boundary and
a trivial space.

When $G$ is abelian, or more generally, virtually nilpotent, then the
entropy of the Poisson boundary, and hence of any stationary space,
vanishes for any $\mu$~\cite{kaimanovich1983random}. Furstenberg shows
that a group $G$ is amenable if and only if there exists a generating
measure $\mu$ such that the entropy of the Poisson boundary vanishes.

Nevo and Zimmer~\cite{nevo2000rigidity} show that $PSL_2(\R)$, and
more generally, any simple Lie group with $\R$-rank $\geq 2$ with a
parabolic subgroup that maps onto $PSL_2(\R)$, has infinitely many
distinct realizable entropy values, for any admissible measure. No
other guarantees are given regarding these values.

Finally, as we mentioned above, Nevo~\cite{nevo2003spectral} shows
that when $G$ has Kazhdan's property (T) then it has an entropy gap,
and Bowen~\cite{bowen2010random} shows that any entropy between $0$
and $h_{RW}(G,\mu)$ can be realized when $G$ is a free group of rank
$2 \leq n < \infty$ and $\mu$ is the uniform measure on its
generators.

\subsection{Acknowledgments}
We are grateful to Yuri Lima for many useful discussions. We would
also like to thank Uri Bader and Amos Nevo for motivating
conversations, to thank Lewis Bowen and Vadim Kaimanovich for
commenting on the first draft of this paper, and to thank the referee
for many insightful comments and suggestions.

\vspace{12 pt}

The remainder of the paper is organized as follows. In
Section~\ref{sec:preliminaries} we give general definitions and
notation, and in particular elaborate on Bowen spaces. In addition, we
show that, for the purpose of entropy realization, it is possible to
assume without loss of generality that $\mu$ is supported
everywhere. In Section~\ref{sec:lamplighter} we prove our realization
result for lamplighters, Theorem~\ref{thm:lamplighter-realization},
and in Section~\ref{sec:lifting} we prove our main result,
Theorem~\ref{thm:main-realization}.

\section{Preliminaries and notation}
\label{sec:preliminaries}
\subsection{Random walks on groups}
Let $G$ be a discrete group, and let $\mathcal{P}(G)$ be the space of
probability measures on $G$. $\mu \in \mathcal{P}(G)$ is a {\em
  generating measure} if $G$ is the semigroup generated by its
support. We assume henceforth that $\mu$ is generating.

Consider the case that $G$ is finitely generated, and let
$\mathcal{S}$ be a finite symmetric generating set of $G$. Define the
{\em word length metric} of $G$ w.r.t.\ $\mS$ to be
$|g|_\mS=\min\left\{ n|s_{1}\cdots
  s_{n}=g,s_{i}\in\mS\right\} $.  The measure $\mu$ has {\em
  finite first moment} if
\begin{align*}
  \sum_{g\in G}\mu(g)|g|_\mS<\infty.  
\end{align*}
Since word length metrics induced by different finite generating sets are
bi-lipschitz equivalent, the property of having finite first moment does
not depend on the choice of $\mS$.

If $\mu$ has finite first moment then it has finite entropy
$\Ent{\mu}$, given by
\begin{align*}
  \Ent{\mu} = \sum_{g \in G}-\mu(g)\log\mu(g).
\end{align*}

For $n \in \N$, let $X_n$ be i.i.d.\ random variables taking values in
$G$, with law $\mu$, and let $Z_n = X_1\cdots X_n$.  A $\mu$ random
walk on $G$ is a measure $\mathbb{P}$ on $\Omega =
G^\N$, such that $(Z_1,Z_2,\ldots) \sim \mathbb{P}$.

\subsection{Stationary spaces and the Poisson boundary}
\label{sec:stationary}
Let $(X,\nu)$ be a Lebesgue probability space, equipped with a
measurable $G$-action $G \times X \xrightarrow{a} X$. We denote by
$g\nu$ the measure defined by $(g\nu)(E) = \nu\left(g^{-1}E\right)$.
$(X,\nu)$ is a $(G,\mu)$-{\em stationary space} if
\begin{align*}
  \mu * \nu = \sum_{g \in G}\mu(g)g\nu = \nu,
\end{align*}
where $\mu * \nu$, the convolution of $\mu$ with $\nu$, is the image
of $\mu \times \nu$ under the action $a$. It follows from stationarity
and the fact that $\mu$ is generating that $\nu$ and $g\nu$ are
mutually absolutely continuous for all $g \in G$.

Let $(X,\nu)$ be a $(G,\mu)$-stationary space, and let $(Y,\eta)$ be a
$G$-space. A measurable map $\pi : X \to Y$ is a $G$-{\em factor} if it is
$G$-equivariant (i.e., if $\pi$ commutes with the $G$-actions) and if
$\pi_*\nu = \eta$. In this case $(Y,\eta)$ is called a $G$-{\em
  factor} of $(X,\nu)$, and it follows that $(Y,\eta)$ is also
$(G,\mu)$-stationary. If, in addition, $\pi$ is an isomorphism of the
probability spaces, then $\pi$ is a $G$-{\em isomorphism}.

An important $(G,\mu)$-stationary space is $\pb(G,\mu)$, the {\em
  Poisson boundary} of $(G,\mu)$. The Poisson boundary can be defined
as the Mackey realization~\cite{mackey1962point} of the shift
invariant sigma-algebra of the space of random walks
$(\Omega,\mathbb{P})$~\cite{zimmer1978amenable,kaimanovich1983random},
also known as the space of shift ergodic components of
$\Omega$. Furstenberg's original
definition~\cite{furstenberg1971random} used the Gelfand
representation of the algebra of bounded $\mu$-harmonic functions on
$G$.  For formal definitions see also Furstenberg and
Glasner~\cite{furstenberg2009stationary}, or a survey by
Furman~\cite{furman2002random}.

$G$-factors of the Poisson boundary are stationary spaces called
$(G,\mu)$-{\em boundaries}; the Mackey realization of each
$G$-invariant, shift invariant sigma-algebra is a $(G,\mu)$-boundary.
A different perspective is that a compact $G$-space $(X,\nu)$ is a
$(G,\mu)$-boundary if it is a $(G,\mu)$-stationary space such that
$\lim_nZ_n\nu$, in the weak* topology, is almost surely a point mass
measure $\delta_x\in\mathcal{P}(X)$. The map $\bnd_X :\Omega \to X$
that assigns to $(Z_1,Z_2,\ldots)$ the point $x$ is called the {\em
  boundary map} of $(X,\nu)$.  For further discussion and a definition
of boundaries that is independent of topology see Bader and
Shalom~\cite{bader2006factor}.

We shall also consider the Poisson boundary of a general Markov chain,
defined again as the space of ergodic components of the shift
invariant sigma-algebra~\cite{kaimanovich1992measure}.

%
\subsection{Furstenberg entropy}

The {\em Furstenberg entropy} of a $(G,\mu)$-stationary space
$(X,\nu)$ is given by
\begin{align*}
  h_\mu(X,\nu) = \sum_{g \in G}\mu(g)\int_X-\log\frac{d\nu}{dg\nu}(x)dg\nu(x).
\end{align*}
Alternatively, it can be written as
\begin{align*}
  h_\mu(X,\nu) = \E{D_{KL}(Z_1\nu||\nu)},
\end{align*}
where $D_{KL}$ denotes the {\em Kullback-Leibler divergence}. Since
the latter decreases under factors, it follows that if $(Y,\eta)$ is a
$G$-factor of $(X,\nu)$, then $h_\mu(X,\nu) \geq h_\mu(Y,\eta)$.

A space $(X,\nu)$ is $G$-invariant if and only if $h_\mu(X,\nu)=0$,
and, in general, the Furstenberg entropy can be thought of as
quantifying the $\mu$-average deformation of $\nu$ by the $G$-action.

\subsection{The induced walk on finite index subgroups}
\label{sec:subgroups}
Let $\Gamma$ be a finite index subgroup of $G$, and let $\tau =
\min_n\{Z_n \in \Gamma\}$ be the $\Gamma$ hitting time of the $\mu$
random walk. $\tau$ is almost surely finite, and so it is possible to
define the hitting measure $\theta \in \mathcal{P}(\Gamma)$ as the law
of $Z_\tau$. The $\theta$ random walk on $\Gamma$ is intimately
related to the $\mu$ random walk on $G$. In particular, any
$(G,\mu)$-stationary space is also a $(\Gamma,\theta)$-stationary
space. Furthermore, Furstenberg~\cite{furstenberg1971random} shows
that the Poisson boundaries of the two walks are identical.

It is shown in~\cite{hartman2012abramov} that $\E{\tau} = [G:\Gamma]$,
and that for any $(G,\mu)$-stationary space $(X,\nu)$ it holds that
\begin{align}
  \label{eq:abramov}
  h_\theta(X,\nu) = [G:\Gamma] \cdot h_\mu(X,\nu).
\end{align}

\subsection{Bowen spaces}
\label{sec:bowen-spaces}
In~\cite{bowen2010random}, Bowen introduces a novel example of stationary
spaces, which we refer to as Bowen spaces. As we make extensive use of
these spaces, we would like to motivate their definition and elaborate
on it.

Let $(B,\nu)=\pb(G,\mu)$ be the Poisson boundary of $(G,\mu)$, let
$(Z_1,Z_2,\ldots)$ be a $\mu$ random walk on $G$, and let $K$ be a
normal subgroup of $G$ with $G\xrightarrow{\varphi} K \backslash
G$. Then $(KZ_1,KZ_2,\ldots)$ is a $\varphi_*\mu$ random walk on the
group $K \backslash G$, which we call the induced random walk. $\pb(K
\backslash G,\varphi_*\mu)$, the Poisson boundary of $(K \backslash
G,\varphi_*\mu)$, is a factor of $\pb(G,\mu)$; the former is
isomorphic to the space of ergodic components of the $K$ action on the
latter. $\pb(K \backslash G,\varphi_*\mu)$ is therefore also a
$G$-space, and, furthermore, a
$(G,\mu)$-boundary~\cite{bader2006factor}.

When $K$ is not normal, we can still consider the induced Markov chain
$(KZ_1, KZ_2, \ldots)$, which is, however, no longer a random walk on a
group. The action of $g \in G$ on the $\mu$ random walk descends to
\begin{align}
  \label{eq:mc-g-action}
  g(KZ_1,KZ_2,\ldots) = (gKZ_1,gKZ_2,\ldots) = (K^ggZ_1,K^ggZ_2,\ldots),
\end{align}
which maps the Markov chain on $K \backslash G$ starting from $K$ to
the chain on $K^g \backslash G$ starting from $K^gg$, where
$K^g=gKg^{-1}$. Denote by $P_K^n(Kg,Kh)$ the transition probability
from $Kg$ to $Kh$ in $n$ steps of the induced chain.

Even though the induced Markov chain on $K \backslash G$ is not a
random walk on a group, we can still consider its Poisson boundary,
$(B_K,\nu_K)$, and a boundary map $(K \backslash G)^\N \to B_K$. As in
the normal case, it is a factor of $\pb(G,\mu)$. However, in this case
$(B_K,\nu_K)$ does not admit a natural $G$-action; the induced action
of $g \in G$ on $(B_K,\nu_K)$ maps it to $(B_{K^g},\nu_{K^gg})$, where
$\nu_{K^gg}$ is the measure on the Poisson boundary of the Markov
chain $K^g \backslash G$ that starts at $K^gg$.

To build a $G$-space, Bowen considers a larger space, namely that of
all Poisson boundaries of the form $B_K$.  Denote by $\sub{G}$ the
space of all subgroups of $G$ equipped with the topology of
convergence on finite subsets, and denote
\begin{align*}
  B(\sub{G}) = \{(K,x)\,:\,K \in \sub{G}, x \in B_K\}.
\end{align*}
This space can be thought of as the Mackey realization of
$\sub{G}\times G^\N$ with the Borel sub-sigma-algebra generated by the
shift $(K,g_1,g_2,\ldots) \mapsto (K,g_2,g_3,\ldots)$ and the quotient
$(K,g_1,g_2,\ldots) \mapsto (K,Kg_1,Kg_2,\ldots)$. As such it is
equipped with the derived Borel structure.

The $G$-action of Eq.~\ref{eq:mc-g-action} on Markov chains descends,
via composition with the boundary map, to a $G$-action on
$B(\sub{G})$.

To construct a stationary measure over $B(\sub{G})$, let $\lambda \in
\mathcal{P}(\sub{G})$ be an invariant random subgroup (IRS) measure -
a measure on $\sub{G}$ that is invariant to conjugation. Let
$\nu_\lambda \in \mathcal{P}(B(\sub{G}))$ be given by
$d\nu_\lambda(K,x) = d\nu_K(x) d\lambda(K)$. This is the measure that
gives the fiber above $K$ the measure $\nu_K$, with measure $\lambda$
over the fibers.

Bowen shows that $(B(\sub{G}), \nu_\lambda)$ is $(G,\mu)$-stationary,
and is furthermore ergodic if $\lambda$ is ergodic. We refer to this
space as the {\em Bowen space} associated with $\lambda$.

By definition, the Furstenberg entropy of a Bowen space is given by
\begin{align*}
  h_\mu(B(\sub{G}),\nu_\lambda) = \sum_{g \in
    G}\mu(g)\int_{B(\sub{G})}-\log\frac{d\nu_\lambda}{dg\nu_\lambda}(K,x)
  dg\nu_\lambda(K,x).
\end{align*}
Using $d\nu_\lambda(K,x) = d\nu_K(x)  d\lambda(K)$ and the fact
that $g\lambda = \lambda$ and $g\nu_K=\nu_{K^gg}$, 
\begin{align*}
   = \sum_{g \in
    G}\mu(g)\int_{\sub{G}}\int_{B_K}-\log\frac{d\nu_K}{d\nu_{Kg}}(x)d\nu_{Kg}(x)
  d\lambda(K).
\end{align*}
Even though $(B_K,\nu_K)$ is not a $(G, \mu)$-stationary space - in
fact, not even a $G$-space - it will help us to define its $(G,\mu)$
Furstenberg entropy by
\begin{align}
  \label{eq:fiber-ent}
  h_\mu(B_K,\nu_K) = \sum_{g \in
    G}\mu(g)\int_{B_K}-\log\frac{d\nu_K}{d\nu_{Kg}}(x)d\nu_{Kg}(x),
\end{align}
so that
\begin{align}
  \label{eq:bowen-space-entropy}
  h_\mu(B(\sub{G}),\nu_\lambda) = \int_{\sub{G}}h_\mu(B_K,\nu_K) d\lambda(K).
\end{align}

An alternative way to understand Eq.~\ref{eq:bowen-space-entropy} is
to regard $(\sub{G},\lambda)$ as a $G$-factor of
$(B(\sub{G}),\nu_\lambda)$. In general, if $(Y,\lambda)$ is a
$G$-factor of $(X,\nu)$, then it is possible to express the entropy of
$X$ as a sum of the entropy of $Y$ and the average entropy of the
fibers $X_y = \pi^{-1}(y)$:
\begin{align}
  \label{eq:rochlin1}
  h_\mu(X,\nu) = h_\mu(Y,\lambda) + \int_Yh_\mu(X_y, \nu_y)d\lambda(y),
\end{align}
where
\begin{align}
  \label{eq:rochlin2} 
 h_\mu(X_y, \nu_y) = \sum_{g \in G}\mu(g)\int_X-\log\frac{d\nu_{gy}}{dg\nu_y}(x)dg\nu_y(x).
\end{align}
Here the measures on the fibers $\nu_y$ are defined by the
disintegration $\nu=\int_Y\nu_yd\lambda(y)$.  In our case, the fiber
above $K \in \sub{G}$ is the Poisson boundary $B_K$, and so
Eq.~\ref{eq:rochlin2} becomes Eq.~\ref{eq:fiber-ent}. Since $\lambda$
is $G$-invariant, the entropy of $(\sub{G},\lambda)$ vanishes, and so
Eq.~\ref{eq:rochlin1} becomes Eq.~\ref{eq:bowen-space-entropy}.

A useful property of $h_\mu(B_K,\nu_K)$ is that it is monotone in $K$:
if $K \le H$ then
\begin{align}
  \label{eq:monotonicity}
  h_\mu(B_K,\nu_K) \geq h_\mu(B_H,\nu_H).
\end{align}
This follows from the fact that $(B_H,\nu_H)$ is, in this case, a
factor (as a probability space) of $(B_K,\nu_K)$, and from the
monotonicity of Kullback-Leibler divergence; $h_\mu(B_K,\nu_K)$ is the
$\mu$-expectation of $D_{KL}(\nu_{Kg}||\nu_K)$.

Kaimanovich and Vershik~\cite{kaimanovich1983random} show that the
Furstenberg entropy of the Poisson boundary is equal to the {\em
  random walk entropy}, given by
\begin{align*}
  h_{RW}(G,\mu) = \lim_{n \to \infty}\frac{1}{n}\Ent{Z_n},
\end{align*}
where
\begin{align}
  \label{eq:k-v}
  \Ent{Z_n} =  -\sum_{g \in G}\P{Z_n=g}\log\P{Z_n=g} = \Ent{\mu^n}.
\end{align}
In this spirit, Bowen shows that the entropy of a Bowen space can also
be written as
\begin{align}
  \label{eq:bowen-ent}
  h_\mu(B(\sub{G}), \nu_\lambda) = \lim_{n \to \infty}\frac{1}{n}\int
  \Ent{KZ_n}d\lambda(K) = \inf_n\frac{1}{n}\int \Ent{KZ_n}d\lambda(K),
\end{align}
where
\begin{align*}
  \Ent{KZ_n} = -\sum_{Kg \in K \backslash G}\P{KZ_n=Kg}\log\P{KZ_n=Kg}.
\end{align*}
By the second equality of Eq.~\ref{eq:bowen-ent}, the map $\lambda
\mapsto h_\mu(B(\sub{G}), \nu_\lambda)$ is upper semi-continuous. It
is not, however, continuous in general.

In the next section, in which we discuss lamplighter groups, we give
some examples of Bowen spaces.

\subsubsection{A general bound on the Radon-Nikodym derivatives of the
  Poisson boundaries of induced Markov chains}

The following general lemma, resembling one from Kaimanovich and
Vershik~\cite{kaimanovich1983random}, will be useful below.
\begin{lemma}
  \label{lemma:mc-pb-rn-bound}
  For every $K \in \sub{G}$, $\nu_K$-almost every $x \in B_K$ and
  every $g \in G$ such that $g,g^{-1} \in \supp \mu$ it holds that
  \begin{align*}
    \frac{1}{\mu(g)} \geq \frac{d\nu_{Kg}}{d\nu_K}(x) \geq P_K(Kg, K)
    \geq \mu(g^{-1}).
  \end{align*}
\end{lemma}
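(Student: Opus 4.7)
The plan is to deduce both bounds from the Markov property of the induced chain on $K\backslash G$ by conditioning on a single step. The key identity is that for any coset $Kh$, first-step conditioning gives the equality of measures on $B_K$
\begin{align*}
  \nu_{Kh} = \sum_{Kh' \in K\backslash G} P_K(Kh, Kh')\, \nu_{Kh'},
\end{align*}
which is just the assertion that the function $Kh \mapsto \int f\, d\nu_{Kh}$ is harmonic for every bounded measurable $f$ on $B_K$. Both sides of the chain of inequalities in the lemma will follow by keeping only a single summand in this decomposition.

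For the middle inequality, I would apply the identity at $Kh = Kg$ and drop all summands except the one landing at $K$, obtaining $\nu_{Kg} \geq P_K(Kg, K)\, \nu_K$ as measures on $B_K$, and hence $\frac{d\nu_{Kg}}{d\nu_K}(x) \geq P_K(Kg, K)$ for $\nu_K$-almost every $x$. The rightmost inequality is then a direct computation of the one-step transition probability: from $Kg$, one step lands in $K$ iff the $\mu$-distributed increment $h'$ satisfies $gh' \in K$, i.e.\ $h' \in g^{-1}K$; since $e \in K$, we have $g^{-1} \in g^{-1}K$, so $P_K(Kg, K) = \mu(g^{-1}K) \geq \mu(g^{-1})$.

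For the leftmost inequality, I would apply the same decomposition at $Kh = K$ and keep only the term reaching $Kg$, giving $\nu_K \geq P_K(K, Kg)\, \nu_{Kg}$ and thus $\frac{d\nu_{Kg}}{d\nu_K}(x) \leq \frac{1}{P_K(K, Kg)}$; the analogous computation $P_K(K, Kg) = \mu(Kg) \geq \mu(g)$ closes the bound.

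I do not see any real obstacle here; the one thing to verify is that $\nu_{Kg} \ll \nu_K$ so the Radon--Nikodym derivative is well defined, but this is already built into the same decomposition, since $\nu_K \geq P_K(K, Kg)\, \nu_{Kg}$ with $P_K(K, Kg) \geq \mu(g) > 0$ under the hypothesis $g \in \supp \mu$.
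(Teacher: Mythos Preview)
Your proof is correct and is essentially the same argument as the paper's: both use the one-step decomposition $\nu_{Kh} = \sum_{Kh'} P_K(Kh,Kh')\,\nu_{Kh'}$ (applied once at $Kg$ and once at $K$) and keep a single summand to obtain each inequality. The only cosmetic difference is that the paper phrases the decomposition in terms of Radon--Nikodym derivatives summing to $1$, while you phrase it as an inequality of measures before passing to derivatives; your explicit remark on absolute continuity $\nu_{Kg} \ll \nu_K$ is a nice addition.
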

\begin{proof}
  Condition on the location of the Markov chain after taking the first
  step, starting at $Kg$. We get for $\nu_K$-almost every $x\in B_K$,
  \begin{align}
    \label{eq:stationary-usual}
    1 = \frac{d\nu_{Kg}}{d\nu_{Kg}}(x)
    = \frac{d\sum_{Kh\in
        K\backslash G}P_K(Kg,Kh)\nu_{Kh}}{d\nu_{Kg}}(x)
    = \sum_{Kh\in
      K\backslash G}P_K(Kg,Kh)\frac{d\nu_{Kh}}{d\nu_{Kg}}(x).
  \end{align}
  Since each summand is positive, it follows that for all $g,h\in G$,
  \begin{align*}
    1\ge P_K(Kg,Kh)\frac{d\nu_{Kh}}{d\nu_{Kg}}(x),
  \end{align*}
  and in particular
  \begin{align*}
    \frac{d\nu_{Kg}}{d\nu_K}(x) \geq P_K(Kg, K).
  \end{align*}
  Note that $P_K(Kg, K) \geq \mu\left(g^{-1}\right)$, by the
  definition of $P_K$.

  Rewriting Eq.~\ref{eq:stationary-usual} as a sum over $G$, we get that
  \begin{align*}
    1 = \frac{d\nu_{Kg}}{d\nu_{Kg}}(x)
    = \frac{d\sum_{g \in G}\mu(g)\nu_{Kg}}{d\nu_{K}}(x)
    = \sum_{g \in G}\mu(g)\frac{d\nu_{Kg}}{d\nu_{K}}(x).
  \end{align*}
  By the same argument above, it follows that
  \begin{align*}
    \frac{1}{\mu(g)} \geq \frac{d\nu_{Kg}}{d\nu_K}(x).
  \end{align*}
  
\end{proof}

\subsection{Lamplighter groups}
\label{sec:lamplighter-intro}
Let $\Gamma$ and $L$ be discrete groups. Let the compact
configurations $\finconf(L, \Gamma)$ be the group of all finitely
supported functions $\Gamma \to L$; that is, if $f \in \finconf(L,
\Gamma)$ then $f$ is equal to the identity of $L$ for all but a finite
number of elements of $\Gamma$. The group operation is pointwise
multiplication:
\begin{align*}
  [f_1f_2](\gamma) = f_1(\gamma)f_2(\gamma),
\end{align*}
and $\Gamma$ acts on $\finconf(L, \Gamma)$ by shifting: 
\begin{align*}
  [\gamma f](\gamma') = f(\gamma^{-1} \gamma').
\end{align*}

The lamplighter group $G=L \wr \Gamma$ is equal to the semidirect
product $\finconf(L, \Gamma) \rtimes \Gamma$, so that the operation is
\begin{align*}
  (f_1, \gamma_1) \cdot (f_2, \gamma_2) = (f_1 (\gamma_1f_2),
  \gamma_1\gamma_2).
\end{align*}
It follows that
\begin{align*}
  (f, \gamma)^{-1} = (\gamma^{-1}f^{-1}, \gamma^{-1}).
\end{align*}
We say that $G$ has {\em base} $\Gamma$ and {\em lamps} in $L$. We
think of the first coordinate as the ``lamp configuration'' and of the
second coordinate as the ``position of the lighter''.

\subsubsection{The limit configuration boundary}
\label{sec:lamp-config}
There exists a natural group homomorphism $\pi : L \wr \Gamma \to
\Gamma$ defined by $\pi(f,\gamma) = \gamma$. We denote $\overline{g} =
\pi(g)$, and $\overline{\mu} = \pi_*\mu$. Thus the $\mu$ random walk
on $L \wr \Gamma$ induces a $\overline{\mu}$ random walk on
$\Gamma$. When $\mu$ has finite first moment, and when the
$\overline{\mu}$ random walk on $\Gamma$ is transient,
Kaimanovich~\cite{kaimanovich1991poisson} shows that the ``value of each
lamp stabilizes'':
\begin{theorem*}[Kaimanovich]
  Let $(Z_1,Z_2,\ldots)$ be a $\mu$ random walk on a finitely
  generated $G=L \wr \Gamma$, let $\mu$ have finite first moment, and
  let the $\overline{\mu}$ random walk on $\Gamma$ be
  transient. Denote $\omega_n=(f_n, \gamma_n)$. Then there exists a
  map $\lampbnd : \Omega \to L^\Gamma$ such that for every $\gamma \in
  \Gamma$
  \begin{align*}
    \lampbnd(\omega_1,\omega_2,\ldots)(\gamma) = \lim_{n \to \infty}f_n(\gamma)
  \end{align*}
  $\mathbb{P}$-almost everywhere.
\end{theorem*}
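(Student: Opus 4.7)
My plan is a Borel--Cantelli argument: for each fixed $\gamma\in\Gamma$ I will show that the lamp value $f_n(\gamma)$ changes only finitely often almost surely, and then take a countable intersection over $\gamma$ to define $\lampbnd$. Let $X_k=(u_k,s_k)$ be the i.i.d.\ $\mu$-distributed increments, so that $Z_n=X_1\cdots X_n=(f_n,\gamma_n)$ with $\gamma_n=s_1\cdots s_n$. Unwinding $Z_n=Z_{n-1}X_n$ in the semidirect product gives
\[
f_n(\gamma)=f_{n-1}(\gamma)\cdot u_n(\gamma\gamma_{n-1}),
\]
so $f_n(\gamma)\ne f_{n-1}(\gamma)$ precisely when $\gamma\gamma_{n-1}\in\mathrm{supp}(u_n)$. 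Set $q(\beta)=\mu(\{(u,s):u(\beta)\ne e_L\})$. Since $X_n$ is independent of $\gamma_{n-1}$,
\[
\mathbb{P}(f_n(\gamma)\ne f_{n-1}(\gamma))=\sum_{\alpha\in\Gamma}\mathbb{P}(\gamma_{n-1}=\alpha)\,q(\gamma\alpha).
\]

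Summing over $n$, the expected number of lamp changes at position $\gamma$ equals $\sum_{\alpha\in\Gamma}G(\alpha)\,q(\gamma\alpha)$, where $G(\alpha)=\sum_{n\ge 0}\mathbb{P}(\gamma_n=\alpha)$ is the Green function of the $\overline{\mu}$ random walk on $\Gamma$. I would bound this by combining two ingredients. First, by a standard last-visit decomposition together with translation invariance on the group, $G(\alpha)=\mathbb{P}(\overline{\mu}\text{-walk ever visits }\alpha)\cdot G(e)\le G(e)$, and $G(e)<\infty$ is precisely transience. Second, fix a natural lamplighter generating set $\mS$ for $G$, consisting of finitely many $\Gamma$-generators together with lamp switches at the identity; any expression of $(u,s)$ as a word in $\mS$ must contain at least one lamp-switch letter for each $\beta\in\mathrm{supp}(u)$, so $|(u,s)|_\mS\ge|\mathrm{supp}(u)|$ and therefore
\[
\sum_{\beta\in\Gamma}q(\beta)=\mathbb{E}[|\mathrm{supp}(u)|]\le\mathbb{E}[|X_1|_\mS]<\infty
\]
by the finite first moment hypothesis; bi-Lipschitz equivalence of finite generating sets ensures this conclusion does not depend on which finite generating set actually witnesses the assumption.

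Combining, $\sum_n\mathbb{P}(f_n(\gamma)\ne f_{n-1}(\gamma))\le G(e)\cdot\mathbb{E}[|\mathrm{supp}(u)|]<\infty$, and Borel--Cantelli gives that $f_n(\gamma)$ is almost surely eventually constant. I would then define $\lampbnd(\omega_1,\omega_2,\ldots)(\gamma)$ as this eventual value on the intersection over $\gamma\in\Gamma$ of the corresponding conull events; countability of $\Gamma$ keeps this intersection conull and produces the map of the statement, defined simultaneously at every $\gamma$. I expect the delicate point to be the interplay of the two hypotheses rather than any individual step: transience alone only gives pointwise finiteness $G(\alpha)<\infty$, whereas the argument requires the \emph{uniform} bound $G\le G(e)$ (where the group structure of $\Gamma$ enters crucially), and conversely without the finite first moment the counting function $q$ need not be summable, so summability against $G$ would fail.
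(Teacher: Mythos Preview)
The paper does not supply its own proof of this statement: it is quoted as a result of Erschler with a citation to~\cite{erschler2011poisson}, and then used as a black box. So there is no ``paper's proof'' to compare against.

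Your argument is correct and is essentially the standard Borel--Cantelli proof one finds in the literature. The key steps --- the recursion $f_n(\gamma)=f_{n-1}(\gamma)\,u_n(\gamma\gamma_{n-1})$, the Green-function bound $G(\alpha)\le G(e)<\infty$ coming from transience plus translation invariance, and the summability $\sum_\beta q(\beta)=\E{|\mathrm{supp}(u_1)|}\le \E{|X_1|_\mS}<\infty$ from the first-moment hypothesis --- are all sound. Two minor remarks: the decomposition $G(e,\alpha)=F(e,\alpha)\,G(e,e)$ you invoke is the \emph{first}-visit (not last-visit) decomposition, though this is purely terminological; and the inequality $|\mathrm{supp}(u)|\le|(u,s)|_\mS$ genuinely depends on choosing a ``switch-at-the-origin'' style generating set, which you do note and handle correctly via bi-Lipschitz equivalence. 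Your closing comment about why \emph{both} hypotheses are needed is accurate and well put.
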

In particular, each limit $\lim_nf_n(\gamma)$ exists almost surely.

The space of functions $L^\Gamma$ admits the natural left
$\Gamma$-action. As such, it is a $(G,\mu)$-stationary space when
equipped with the measure $\lampbnd_*\mathbb{P}$. Since $\lampbnd$ is
shift invariant, $(L^\Gamma,\lampbnd_*\mathbb{P})$ is a
$(G,\mu)$-boundary, which we shall refer to as the {\em limit
  configuration boundary}. Kaimanovich's theorem equivalently implies
that this boundary has positive entropy, which we denote by
$h_\lamps(G,\mu)$.

\subsubsection{Some Bowen spaces of lamplighters}
In accordance with the definition of $\finconf(L, \Gamma)$ as the set
of finitely supported functions from $\Gamma$ to $L$, let $\finconf(L,
S)$ be the set of finitely supported functions from $\Gamma$ to $L$,
which are supported on $S \subseteq \Gamma$.  Let $K_S$ be the
subgroup of $G$ defined by
\begin{align*}
  K_S = \{(f,e_\Gamma) \in G\,:\, f \in \finconf(L, S)\},
\end{align*}
where $e_\Gamma$ is the identity of $\Gamma$.  The conjugation of
$K_S$ by an element $g=(f,\gamma)$ of $G$ amounts to a shift of $S$ by
$\gamma$, as we show in the next claim.
\begin{claim}
  \label{thm:K-conj}
  Let $(f,\gamma) \in G$. Then $K_S^{(f, \gamma)} = K_{\gamma S}$.
\end{claim}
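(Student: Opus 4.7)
The plan is a direct calculation using the semidirect product structure of $G = L \wr \Gamma$. Given an arbitrary element $(h, e_\Gamma) \in K_S$ (so $h$ is supported on $S$) and $(f, \gamma) \in G$, I would compute
\[
(f, \gamma) (h, e_\Gamma) (f, \gamma)^{-1}
\]
explicitly, using the formulas for the group operation and for the inverse $(f,\gamma)^{-1} = (\gamma^{-1} f^{-1}, \gamma^{-1})$ given in the paper. Applying the semidirect product law once gives $(f \cdot (\gamma h), \gamma)$; multiplying on the right by $(\gamma^{-1} f^{-1}, \gamma^{-1})$ returns the $\Gamma$-coordinate to $e_\Gamma$, while the $\finconf(L,\Gamma)$-coordinate becomes $f \cdot (\gamma h) \cdot \gamma(\gamma^{-1} f^{-1}) = f \cdot (\gamma h) \cdot f^{-1}$, where the last simplification uses that the $\Gamma$-action is a left action.

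The next step is to identify the support of $\tilde h := f \cdot (\gamma h) \cdot f^{-1}$. Because the group operation in $\finconf(L,\Gamma)$ is pointwise in $L$, at each $\gamma' \in \Gamma$ we have $\tilde h(\gamma') = f(\gamma')\,[\gamma h](\gamma')\,f(\gamma')^{-1}$, which is the identity of $L$ if and only if $[\gamma h](\gamma')$ is; so $\tilde h$ has exactly the same support as $\gamma h$. A short computation using the defining formula for the shift action then shows that the support of $\gamma h$ is the $\gamma$-translate of $\supp(h)$, hence contained in $\gamma S$. This gives $(f, \gamma) K_S (f, \gamma)^{-1} \subseteq K_{\gamma S}$.

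For the reverse inclusion, I would observe that the map $(h, e_\Gamma) \mapsto (f \cdot (\gamma h) \cdot f^{-1}, e_\Gamma)$ is a bijection of $\finconf(L,\Gamma) \times \{e_\Gamma\}$ to itself, since both pointwise conjugation by $f$ and the shift action by $\gamma$ are bijections of $\finconf(L,\Gamma)$; applying the same support computation to any $(h', e_\Gamma) \in K_{\gamma S}$ shows that its preimage lies in $K_S$, so the inclusion is an equality. There is no real obstacle in this proof --- it is a bookkeeping exercise --- but care is required with the shift convention and with noting that pointwise conjugation in the non-abelian case preserves supports precisely because $\ell \cdot m \cdot \ell^{-1} = e_L$ if and only if $m = e_L$.
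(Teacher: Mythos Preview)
Your proposal is correct and follows essentially the same direct computation as the paper's proof: conjugate a generic element of $K_S$ by $(f,\gamma)$, simplify to $(f\,(\gamma h)\,f^{-1}, e_\Gamma)$, and read off the support. You are somewhat more careful than the paper in two places --- explicitly noting that pointwise conjugation by $f$ preserves supports even when $L$ is non-abelian, and spelling out the reverse inclusion via bijectivity --- but the underlying argument is identical.
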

\begin{proof}
  By definition
  \begin{align*}
    K_S^{(f,\gamma)} = \{(f,\gamma)(g,e_\Gamma)(f,\gamma)^{-1}\,:\, \supp g \subseteq S\}.
  \end{align*}
  Since
  \begin{align*}
    (f,\gamma)(g,e_\Gamma)(f,\gamma)^{-1} = (f (\gamma g),
    \gamma)(\gamma^{-1}f^{-1},\gamma^{-1}) = (f (\gamma g) f^{-1}, e_\Gamma),
  \end{align*}
  it follows that
  \begin{align*}
    K_S^{(f, \gamma)} = \{g\,:\, \supp g \subseteq \gamma S\} =
    K_{\gamma S}.
  \end{align*}
\end{proof}

We call a measure on the subsets of $\Gamma$ a {\em percolation}
measure. The map that assigns the subgroup $K_S < G$ to each $S
\subseteq \Gamma$ maps percolation measures to measures on $\sub{G}$.
It follows from Claim~\ref{thm:K-conj} above that if a percolation
measure $\lambda$ is $\Gamma$-invariant, then the associated measure
on $\sub{G}$ is an IRS measure and therefore $G$-invariant. Likewise, if
$\lambda$ is $\Gamma$-ergodic, then the associated IRS measure, which we also
call $\lambda$, is $G$-ergodic.

Recall that given an ergodic IRS measure $\lambda$, the associated Bowen space
$(B(\sub{G}),\nu_\lambda)$ is also ergodic. We shall use, for our
purposes of entropy realization, Bowen spaces built from ergodic
percolations on $\Gamma$.

As a motivating example, consider the canonical lamplighter $G=(\Z/2\Z)
\wr \Z^d$.  Let $E$ be the set of even elements in $\Z^d$, and let $O$
be its complement, or the set of odd elements. The subgroups $K_E$ and
$K_O$ are, respectively, the finite configurations supported on the
even positions and on the odd positions. By Claim~\ref{thm:K-conj},
conjugation of either of these groups by any element of $G$ either
leaves it invariant or maps it to the other. It follows that $\lambda
= \half\delta_{K_E}+\half\delta_{K_O}$ is an ergodic IRS measure, and that
$(B(\sub{G}), \nu_\lambda)$ is an ergodic Bowen space. This space
consists of two fibers, which are the Poisson boundaries of the
induced Markov chains on $K_E \backslash G$ and on $K_O \backslash G$.

Informally, Eq.~\ref{eq:k-v} states that the entropy of the Poisson
boundary of the random walk on $G$ is equal to the exponential growth
rate of the support of $Z_n$. Intuitively, the growth rate of the
support of $K_EZ_n$, which ``mods out'' the even lamps, should be half
that of the support of $Z_n$, since the random walk entropy of the
projected random walk on $\Gamma$ vanishes. Therefore, by
Eq.~\ref{eq:bowen-ent}, the entropy of $(B(\sub{G}),\nu_\lambda)$ can
be expected to equal half that of the Poisson boundary. By the same
intuition, if we choose an IRS measure in which $K$ includes each lamp
independently with probability $1-p$, then we expect that the entropy
of the associated Bowen space would be $p$ times $h_{RW}(G,\mu)$, the
entropy of the Poisson boundary, and that therefore any entropy in
$[0,h_{RW}(G,\mu)]$ can be realized. We are not able to show this, and
instead resort to a more elaborate construction which only realizes a
dense set of entropies (see Section~\ref{sec:lamplighter}).

For more on invariant random subgroups of lamplighters
see~\cite{bowen2012invariant}.

\subsection{Digression: the Radon-Nikodym compact is not
  necessarily a boundary}
The Radon-Nikodym factor $rn : X \to \R^G$ assigns to almost every
point $x$ in a $(G,\mu)$-stationary space $(X,\nu)$ the function
$f_x(g) = \frac{dg\nu}{d\nu}(x)$. Since this factor commutes with $G$,
its image, called the Radon-Nikodym compact of $(X,\nu)$, is also a
stationary space, which Kaimanovich and Vershik show to have the same
entropy as $(X,\nu)$~\cite{kaimanovich1983random}. An equivalent
definition is given by Nevo and Zimmer~\cite{nevo2000rigidity}.

Consider the example above of the Bowen space
$(B(\sub{G}),\nu_\lambda)$ associated with $\lambda =
\half\delta_{K_E}+\half\delta_{K_O}$. This space is not a boundary,
since it has a factor onto the non-trivial measure preserving space
$(\sub{G},\lambda)$. Furthermore, the map from
$(B(\sub{G}),\nu_\lambda)$ into $(\sub{G},\lambda)$ factors through
the Radon-Nikodym compact, and therefore the compact is also not a
boundary. This is in apparent contradiction to Proposition 3.6
in~\cite{kaimanovich1983random}. Note that counterexamples in Lie
groups appear in~\cite{nevo2000rigidity}, but these do not contradict
the statement of the said proposition, since it is made for discrete
groups only.

\subsection{The support of $\mu$}
Given a generating measure $\mu \in \mathcal{P}(G)$, we construct in
this section a measure $\mup$ supported everywhere on $G$ (and hence
also generating) such that any $G$-space $(X,\nu)$ is
$(G,\mu)$-stationary if and only if it is $(G,\mup)$-stationary, and furthermore
$h_{\mup}(X,\nu) = h_\mu(X,\nu)$. For our purposes of entropy
realization, this will allow us to assume, without loss of generality,
that $\mu$ has full support, which will simplify our proofs.

We first show that if the Poisson boundaries of $(G,\mu)$ and
$(G,\mup)$ coincide then so do their stationary spaces. For this, we
use the characterization of the Poisson boundary via harmonic
functions. Indeed, $\pb(G,\mu)=\pb(G,\mup)$ if and only if every
bounded $\mu$-harmonic function is $\mup$-harmonic, and vice versa. We
next construct a measure $\mup$ that has the same Poisson boundary as
$\mu$.
\begin{lemma}
  \label{lemma:harmonic-stationary}
  Let $\mu, \mup \in \mathcal{P}(G)$ be two generating measures such a
  bounded function $h : G \to \R$ is $\mu$-harmonic if and only if it
  is $\mup$-harmonic. Then a $G$-space $(X,\nu)$ is $\mu$-stationary
  if and only if it is $\mup$-stationary.
\end{lemma}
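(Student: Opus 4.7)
My plan is to characterize stationarity of $(X,\nu)$ in terms of a family of bounded harmonic functions on $G$, so that the hypothesis on harmonic functions can be applied directly. Given any bounded measurable $f \in L^\infty(X,\nu)$, I would define
\begin{align*}
  h_f(\gamma) = \int_X f\, d(\gamma\nu), \qquad \gamma \in G.
\end{align*}
This is bounded by $\|f\|_\infty$, and hence belongs to $\ell^\infty(G)$.

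The key intermediate claim is that $\nu$ is $(G,\mu)$-stationary if and only if $h_f$ is $\mu$-harmonic (that is, $h_f(\gamma) = \sum_g \mu(g)\, h_f(\gamma g)$ for every $\gamma$) for every $f \in L^\infty(X,\nu)$. For the forward direction: if $\mu * \nu = \nu$, then applying $\gamma$ to both sides gives $\gamma \nu = \sum_g \mu(g)\, (\gamma g)\nu$ as measures on $X$, and integrating $f$ yields exactly $h_f(\gamma) = \sum_g \mu(g)\, h_f(\gamma g)$. Conversely, specializing this identity to $\gamma = e$ gives $\int f\, d\nu = \sum_g \mu(g) \int f\, d(g\nu) = \int f\, d(\mu * \nu)$; since this holds for every bounded measurable $f$, we obtain $\nu = \mu * \nu$.

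Once this characterization is in hand, the lemma follows immediately. Under the hypothesis, the classes of bounded $\mu$-harmonic and bounded $\mup$-harmonic functions on $G$ coincide, so for every $f \in L^\infty(X,\nu)$ the function $h_f$ is $\mu$-harmonic if and only if it is $\mup$-harmonic. Applying the characterization in both directions then yields that $(X,\nu)$ is $(G,\mu)$-stationary if and only if it is $(G,\mup)$-stationary.

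I do not expect a serious obstacle: the argument is essentially formal, and the only point that requires any attention is confirming the pushforward identity $\gamma(\mu*\nu) = \sum_g \mu(g)\,(\gamma g)\nu$, which is a direct consequence of the definition $(g\nu)(E) = \nu(g^{-1}E)$ and the associativity of the $G$-action on $X$. Measurability issues are trivial because $G$ is countable.
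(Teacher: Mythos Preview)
Your proposal is correct and follows essentially the same approach as the paper's own proof. The only cosmetic difference is that the paper tests against indicator functions of measurable sets (defining $h(g)=g\nu(A)$) rather than general bounded measurable $f$, and it skips the explicit two-way characterization, passing directly from $\mu$-harmonicity of $h$ to $\mup$-harmonicity and then evaluating at $e$; your version simply packages the same computation more symmetrically.
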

\begin{proof}
  Let $(X,\nu)$ be $\mu$-stationary, and let $A$ be an arbitrary
  $\nu$-measurable set. Then $h(g) = g\nu(A)$ is $\mu$-harmonic. By
  the claim hypothesis it is also $\mup$-harmonic, and therefore
  \begin{align*}
    \sum_{g \in G}\mup(g)h(g) = h(e).
  \end{align*}
  Hence
  \begin{align*}
    \sum_{g \in G}\mup(g)g\nu(A) = \nu(A),
  \end{align*}
  and since this holds for any $A$ we have that $\mup * \nu = \nu$,
  and $(X,\nu)$ is $\mup$-stationary. The other direction follows by
  symmetry.
\end{proof}

Let $\alpha$ be a measure over the non-negative integers such that
$\alpha(1) \neq 0$. Let
\begin{align*}
  \mup = \sum_{n=0}^\infty\alpha(n)\mu^n,
\end{align*}
where $\mu^n$ denotes the convolution of $\mu$ with itself $n$ times, or
the distribution of $n$ steps of a $\mu$ random walk. Since $\alpha(1)
\neq 0$ then $\mup$ is also generating.

\begin{claim}
  A bounded function $h : G \to \R$ is $\mu$-harmonic if and only if
  it is $\mup$-harmonic.
\end{claim}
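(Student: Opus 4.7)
The forward direction (from $\mu$-harmonic to $\mup$-harmonic) is a routine computation: by induction on $n$, using $\mu$-harmonicity of $h$ at each step, every bounded $\mu$-harmonic $h$ satisfies
\begin{align*}
  \sum_{g' \in G} \mu^n(g') h(g g') = h(g)
\end{align*}
for all $g \in G$ and all $n \ge 0$ (with $\mu^0 = \delta_e$). Multiplying by $\alpha(n)$, summing over $n$, and exchanging the two absolutely convergent summations (justified by $h$ bounded and $\sum_n \alpha(n) = 1$) yields the $\mup$-harmonic identity.

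The reverse direction is the substantive one, and I would prove it using the time-change coupling foreshadowed in the paragraph preceding the claim. Concretely, if $(Z_n)_{n \ge 0}$ is a $\mu$-random walk from $g$ and $T_1, T_2, \ldots$ are i.i.d.\ $\alpha$-distributed and independent of the walk, then with $\tau_n = T_1 + \cdots + T_n$ the process $(Z_{\tau_n})_{n \ge 0}$ is a $\mup$-random walk from $g$. Since $\alpha(1) > 0$ forces $\E{T_1} > 0$, the strong law of large numbers gives $\tau_n \to \infty$ almost surely, so $(\tau_n)$ is almost surely cofinal in $\N$. Given a bounded $\mup$-harmonic $h$, the process $M_n := h(Z_{\tau_n})$ is a bounded martingale with respect to the joint filtration, hence converges almost surely and in $L^1$ to a limit $M_\infty$ satisfying $\EE{g}{M_\infty} = h(g)$. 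Cofinality of $(\tau_n)$ ensures $M_\infty$ is measurable with respect to the $\mu$-shift-invariant $\sigma$-algebra of $(Z_n)$, so by the Poisson representation the function $g \mapsto \EE{g}{M_\infty}$ is bounded $\mu$-harmonic; it equals $h$ by construction, establishing $\mu$-harmonicity.

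The main obstacle is justifying that $M_\infty$ is indeed measurable with respect to the $\mu$-shift-invariant $\sigma$-algebra, which amounts to identifying the Poisson boundaries $\pb(G, \mup)$ and $\pb(G, \mu)$ as $G$-measure spaces with matched harmonic measures. This Mackey-theoretic identification follows from cofinality of $\{\tau_n\}$ in $\N$ together with the factor relationship supplied by the forward direction, and is routine in the framework of Section~\ref{sec:stationary}, but it constitutes the only non-trivial step of the proof.
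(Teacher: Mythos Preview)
Your overall strategy matches the paper's: the forward direction is the routine convolution computation, and for the converse you couple a $\mu$-walk $(Z_n)$ with independent $\alpha$-times to produce a $\mup$-walk $(Z_{\tau_n})$, form the bounded martingale limit $M=\lim_n h(Z_{\tau_n})$, and argue that $M$ lands in the right sigma-algebra so that its Poisson transform is a $\mu$-harmonic function equal to $h$.

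The gap is in your justification of that measurability step. You assert that cofinality of $(\tau_n)$ in $\N$, together with the factor map furnished by the forward direction, places $M$ in the $\mu$-shift-invariant $\sigma$-algebra of $(Z_n)$, and you call the residual Mackey identification routine. But cofinality alone cannot do this, and your argument never invokes the hypothesis $\alpha(1)\neq 0$. Take $G=\Z$ with $\mu$ uniform on $\{\pm 1\}$ and $\alpha=\delta_2$: then $\tau_n=2n$ is cofinal and $\mup=\mu^2$, yet $h(n)=(-1)^n$ is bounded $\mup$-harmonic but not $\mu$-harmonic, and $M=\lim_n h(Z_{2n})$ flips sign under the $Z$-shift, so it is not in the $\mu$-invariant algebra. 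Appealing to the forward-direction factor and then declaring the reverse identification routine is circular, since equality of the two Poisson boundaries is precisely the claim. What the paper actually uses is that $M$ is invariant under the shift $(\tau_1,\tau_2,\ldots)\mapsto(\tau_2,\tau_3,\ldots)$, while $(\tau_n)$ is an aperiodic irreducible random walk on $\Z^+$ (aperiodicity coming from $\alpha(1)\neq 0$) and hence has trivial shift-invariant sigma-algebra; this forces $M$ to be, almost surely, a function of $(Z_n)$ alone, after which one checks that $h'(Z_n):=\CondE{M}{Z_n}$ is $\mu$-harmonic and agrees with $h$ along the $\mup$-walk. The triviality of the time-walk's boundary is the ingredient your proposal is missing.
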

\begin{proof}
  It is easy to show that any bounded $\mu$-harmonic function is also
  $\mu^n$-harmonic, and therefore any $\mu$-harmonic function is also
  $\mup$-harmonic, since $\mup$ is a linear combination of convolution
  powers of $\mu$ (see, e.g.,~\cite{kaimanovich1983random}).

  To see the converse, let $h$ be a bounded $\mup$-harmonic function
  on $G$. Let $(Z_1,Z_2,\ldots)$ be a $\mu$ random walk on $G$
  starting from $g$, and denote by $\Esub{g}{\cdot}$ the expectation
  on its probability space. Let $\{\tau_n\}_{n=1}^\infty$ be a random
  walk on $\Z^+$ with transition probabilities $\alpha$. Then
  $(Z_{\tau_1},Z_{\tau_2},\ldots)$ is a coupled $\mup$ random walk on
  $G$, also starting from $g$. Let $M = \lim_nh(Z_{\tau_n})$; note
  that $h(Z_{\tau_n})$ is a bounded martingale w.r.t.\ the filtration
  $\mathcal{F}_n=\sigma(Z_{\tau_1},\ldots,Z_{\tau_n})$, and therefore
  $M$ is well defined.

  To see that $h$ is also $\mu$-harmonic, note that $M$ is measurable
  in the sigma-algebra generated by the union of the following two
  sigma-algebras: $\sigma(Z_1,Z_2,\ldots)$ and the shift-invariant
  sigma-algebra of $\sigma(\tau_1,\tau_2,\ldots)$. However, the latter
  is trivial, as it is the shift-invariant sigma-algebra of an
  aperiodic, irreducible random walk on $\Z^+$. Hence $M$ is
  measurable in $\sigma(Z_1,Z_2,\ldots)$, and so $h'(Z_n) =
  \CondE{M}{Z_n}$ is $\mu$-harmonic. But
  $h'(Z_{\tau_n})=\CondE{M}{Z_{\tau_n}}=h(Z_{\tau_n})$, so $h$ is
  $\mu$-harmonic.
\end{proof}

We have thus, by Lemma~\ref{lemma:harmonic-stationary}, shown that a
$G$-space $(X,\nu)$ is $\mu$-stationary if and only if it is
$\mup$-stationary.  Furthermore, it is easy to
show~\cite{kaimanovich1983random} that
\begin{align*}
  h_{\mu^n}(X,\nu) = n \cdot h_\mu(X,\nu),
\end{align*}
and therefore
\begin{align*}
  h_{\mup}(X,\nu) = h_\mu(X,\nu)\sum_{n=0}^\infty n \alpha(n).
\end{align*}
If we choose $\alpha$ so that $\sum_{n=0}^\infty n \alpha(n) =1$, then
we have that $h_{\mup}(X,\nu) = h_\mu(X,\nu)$.

To summarize, we have shown that $\mu$, $\mup$ share the same
stationary spaces, and that furthermore the set of entropies that can
be realized using ($G$-ergodic) stationary spaces for $\mu$ and $\mup$
are identical. Additionally, it is straightforward to show that if
$\mu$ has finite first moment then so does $\mup$. Therefore, for
the purposes of entropy realization for finite first moment measures,
$\mu$ and $\mup$ are equivalent.

The advantage of $\mup$ is that it is supported everywhere on $G$. We
will henceforth assume, without loss of generality, that $\mu$ is
supported everywhere, which will simplify our proofs.  Note also that
if $\mu$ is supported everywhere then so are its hitting measures on
finite index subgroups, which we discuss in
Section~\ref{sec:subgroups}. Hence all the measures we will concern
ourselves with will be assumed to be supported everywhere.

\section{Entropy realization for lamplighter groups}
\label{sec:lamplighter}
In this section we prove Theorem~\ref{thm:lamplighter-realization}.
To this end, we will prove the following more general proposition, of
which the theorem will be a direct consequence. This proposition will
also be useful to us later.

In Section~\ref{sec:b-ell} we introduce the boundary
$(B_\ell,\nu_\ell)$ of lamplighter groups, which is an extension of
the limit configuration boundary. We denote its entropy by
$h_\ell(G,\mu)$, and so
\begin{align*}
  h_\lamps(G,\mu) \le h_\ell(G,\mu)\le h_{RW}(G,\mu).
\end{align*}
An interesting question is to understand when these numbers are all
equal. In some cases this is known to be true (see
Section~\ref{sec:lamplighter-dense-realization}), and furthermore, the
authors are not aware of any counterexample.

\begin{proposition}
  \label{lem:lamplighter-uniform-realization}
  Let $G=L \wr \Gamma$ be a finitely generated discrete lamplighter
  with base group $\Gamma$ and lamps in $L$. Then there exists a
  family of $G$-ergodic invariant random subgroups
  $\{\lambda_{p,m}\,:\,p \in(0,1),m \in \N\}$, such that, for every
  generating measure $\mu \in \mathcal{P}(G)$ with finite entropy, and
  such that the projected random walk on $\Gamma$ has a trivial
  Poisson boundary, it holds that
  \begin{align*}
    \lim_{m \to \infty} h_\mu(B(\sub{G}),\nu_{\lambda_{p,m}}) =
    p \cdot h_\ell(G,\mu).
  \end{align*}
\end{proposition}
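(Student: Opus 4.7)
The plan is to construct $\lambda_{p,m}$ via a $\Gamma$-invariant ``block percolation'' on $2^\Gamma$, push it forward via $S \mapsto K_S$ to an IRS on $\sub{G}$, and analyze the Bowen-space entropy using~\eqref{eq:bowen-ent} in the limit $m\to\infty$.

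\textbf{Construction.} The hypothesis that the projected walk on $\Gamma$ has trivial Poisson boundary forces $\Gamma$ to be amenable. I would fix a F\o{}lner sequence $\{F_m\}$ with $|F_m| \approx m$ and, for each $m$, construct a $\Gamma$-invariant, $\Gamma$-ergodic random partition $\mathcal{P}_m$ of $\Gamma$ into pieces each essentially a translate of $F_m$ (for instance via Ornstein--Weiss quasi-tilings). Independently label each piece ``visible'' with probability $p$ and ``hidden'' with probability $1-p$, and let $S$ be the union of the hidden pieces. The law $\pi_{p,m}$ of $S$ on $2^\Gamma$ is $\Gamma$-ergodic, and by Claim~\ref{thm:K-conj} its pushforward $\lambda_{p,m}$ via $S \mapsto K_S$ is a $G$-ergodic IRS on $\sub{G}$.

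\textbf{Reducing to lamp entropy and evaluating the limit.} Writing $Z_n = (f_n, \gamma_n)$, the coset $K_S Z_n$ is in bijection with $(\gamma_n, f_n|_{\Gamma\setminus S})$, so by Bowen's formula
\begin{align*}
h_\mu(B(\sub{G}),\nu_{\lambda_{p,m}}) = \lim_{n\to\infty}\frac{1}{n}\int \Ent{\gamma_n,\,f_n|_{\Gamma\setminus S}}\,d\pi_{p,m}(S).
\end{align*}
Triviality of the projected Poisson boundary gives $\Ent{\gamma_n}/n \to 0$, so only the lamp contribution $\Ent{f_n|_{\Gamma\setminus S}}/n$ matters. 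The upper bound $\le p\cdot h_\ell(G,\mu)$ holds for every $m$ by subadditivity of entropy over the independent pieces of $\mathcal{P}_m$: only a $p$-fraction of pieces are visible, and each contributes no more than its share of the full lamplighter-boundary entropy. For the lower bound, as $m \to \infty$ each visible piece is typically visited enough times by the walker that the partial observation $f_n|_{\Gamma\setminus S}$ captures essentially all the lamplighter entropy associated with the visible sites, yielding the asymptotic $p\cdot h_\ell(G,\mu)$.

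\textbf{Main obstacle.} The delicate step is the lower bound. The concern is that masking a $(1-p)$-fraction of lamps might destroy more than a $(1-p)$-fraction of the entropy because correlations among lamps across block boundaries would be lost. Large blocks control this: within a visible block of size $m$, the error from ignorance of adjacent hidden blocks is of order $|\partial F_m|/|F_m|$, which vanishes as $m\to\infty$ by the F\o{}lner property. Quantifying this boundary loss rigorously---likely combining Lemma~\ref{lemma:mc-pb-rn-bound} with an Abramov/Shannon-type decomposition of $\Ent{f_n|_{\Gamma\setminus S}}$ into block contributions---is the main technical work.
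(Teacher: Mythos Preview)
Your construction and analysis differ substantially from the paper's, and the upper-bound step as you state it does not go through.

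\textbf{The subadditivity issue.} You claim the bound $\le p\cdot h_\ell(G,\mu)$ holds for every $m$ by subadditivity over blocks. Subadditivity gives
\[
\int \Ent{f_n|_{\Gamma\setminus S}}\,d\pi_{p,m}(S)\;\le\; p\sum_{\text{all blocks }B}\Ent{f_n|_B},
\]
since each block is visible with probability $p$. But $\sum_B\Ent{f_n|_B}$ is an \emph{over}-count of $\Ent{f_n}$ (again by subadditivity), so you end up with an upper bound of the form $p\cdot(\text{something}\ge \Ent{f_n})$, which does not yield $p\cdot h_\ell$. The lamp values $f_n|_B$ across different blocks $B$ are correlated through the trajectory $\gamma_1,\dots,\gamma_n$, and that correlation is exactly what you need to control---it does not disappear for any fixed $m$. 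The paper itself remarks that even for i.i.d.\ Bernoulli$(p)$ percolation the expected identity $p\cdot h_{RW}$ is \emph{not} established, which is why a different route is taken.

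\textbf{How the paper avoids this.} The paper never decomposes entropy block-by-block. Instead it constructs $\lambda_{p,m}$ as a long-range-correlated percolation (site $\gamma$ is open iff an entire Følner translate $\gamma F_m$ is open under an i.i.d.\ field) with the property that, for any \emph{fixed} finite $S\subset\Gamma$, as $m\to\infty$ the set $S$ is almost surely either entirely open or entirely closed (Lemma~\ref{lemma:long-range-percolation}). This reduces the integral $\int h_{\mu^n}(B_K,\nu_K)\,d\lambda_{p,m}$ to three pieces: the event ``$S$ open'' (probability $\approx 1-p$), ``$S$ closed'' (probability $\approx p$), and a remainder of vanishing measure. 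On each piece the paper uses only the \emph{monotonicity} $K\le H\Rightarrow h_\mu(B_K,\nu_K)\ge h_\mu(B_H,\nu_H)$, comparing to the two reference subgroups $K_S$ and $K_{S^c}$. Two separate lemmas (Lemmas~\ref{lem:close-to-full} and~\ref{lem:close-to-base}) show that for $S$ large, $h_{\mu^n}(B_{K_S},\nu_{K_S})/n$ is small while $h_{\mu^n}(B_{K_{S^c}},\nu_{K_{S^c}})/n$ is close to $h_\ell$. Both the upper and lower bounds then follow from the same monotonicity-plus-two-reference-points argument, with no need to quantify block-boundary losses. Your block-tiling construction could in principle be analyzed by the same monotonicity trick, but the percolation you build does not have the needed ``all-or-nothing on a fixed finite window'' property as $m\to\infty$---every window straddles a block boundary with positive probability regardless of $m$.
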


We proceed by deducing Theorem~\ref{thm:lamplighter-realization} from
Proposition~\ref{lem:lamplighter-uniform-realization}, before proving
the proposition itself.  
\begin{proof}[Proof of Theorem~\ref{thm:lamplighter-realization}]
  The statement of
  Proposition~\ref{lem:lamplighter-uniform-realization} is stronger
  than that of the theorem, since $h_\lamps(G,\mu) \leq
  h_\ell(G,\mu)$, and since the family $\left\{\lambda_{p,m}\right\}$
  is universal, in the sense that it can be used to realize entropy
  densely for any finite entropy generating measure on $G$.
\end{proof}

\subsection{Proof of Proposition~\ref{lem:lamplighter-uniform-realization}}

Let $G = L \wr \Gamma$ be a lamplighter group, and let $\mu$ be a
generating measure. Recall that we denote by $\pi$ the projection $G
\to \Gamma$ defined by $\pi(f, \gamma) = \gamma$, and denote
$\overline{\mu} = \pi_*\mu$. We assume that the $\overline{\mu}$
random walk on $\Gamma$ has a trivial Poisson boundary. It follows
that $\Gamma$ is amenable.

Recall (Section~\ref{sec:lamplighter-intro}) the definition of $G$ as
the group $\finconf(L, \Gamma) \rtimes \Gamma$, where $\finconf(L,
\Gamma)$ is the group of finitely supported functions from $\Gamma$ to
$L$, and recall that for $S$ a subset of $\Gamma$, $\finconf(L, S)$ is
the group of finitely supported functions supported on $S$.  Finally
$K_S < G$ is the subgroup of finite configurations supported on $S$,
with the walker in the origin:
\begin{align*}
  K_S = \{(f,e_\Gamma) \in G\,:\, f \in \finconf(L, S)\}.
\end{align*}
A percolation measure $\lambda$ on $\Gamma$ is a measure on subsets of
$\Gamma$. In Section~\ref{sec:lamplighter-intro} above we showed how any 
$G$-invariant ergodic $\lambda$ can be associated, via the map that
assigns the subgroup $K_S$ to the set $S$, with an ergodic Bowen space
$(B(\sub{G}), \nu_\lambda)$.

To prove Proposition~\ref{lem:lamplighter-uniform-realization} we
first, for any $p \in [0,1]$, construct an ergodic percolation measure
$\lambda$ on subsets $S$ of $\Gamma$ such that, with probability close
to $p$, $S$ excludes a large neighborhood of the origin, and with
probability close to $1-p$, $S$ includes a large neighborhood of the
origin.  Hence, the associated IRS measure has the property that
$K_S$, with high probability, either includes or excludes all the
lamps in a large neighborhood of the origin.

Given a percolation measure $\lambda$ on $\Gamma$, we say that
``$\gamma \in \Gamma$ is open'' (or closed) to signify the event that
$\gamma$ is (or is not) an element of the subset drawn from
$\lambda$. Likewise, we say that ``$S \subseteq \Gamma$ is open'' when
all $\gamma \in S$ are open, and that ``$S$ is closed'' when all
$\gamma \in S$ are closed.
\begin{lemma}
  \label{lemma:long-range-percolation}
  Let $\Gamma$ be a discrete amenable group. Then there exists a
  family of percolation measures $\{\lambda_{p,m}\,:\, p \in [0,1],m
  \in \N\}$ that satisfy the following conditions.
  \begin{enumerate}
  \item $\lambda_{p,m}$ is a $\Gamma$-invariant ergodic measure for
    all $p \in [0,1]$ and $m \in \N$.
  \item
    For all $\gamma \in \Gamma$, $p \in [0,1]$ and $m \in \N$ it holds that
    \begin{align*}
      \lambda_{p,m}(\mbox{$\gamma$ is closed}) = p
    \end{align*}
  \item For any finite $S \subset \Gamma$ and all $p \in (0,1)$ it
    holds that
    \begin{align*}
      \lim_{m \to \infty}\lambda_{p,m}(\mbox{$S$ is open}\, \cup \,\mbox{$S$ is closed}) = 1.
    \end{align*}
  \end{enumerate}
\end{lemma}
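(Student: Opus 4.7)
The plan is to construct $\lambda_{p,m}$ via a two-stage random process: first, an invariant random partition of $\Gamma$ into large finite ``tiles''; second, an independent Bernoulli coloring that marks each tile closed with probability $p$ and open with probability $1-p$.

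For the partition stage, I would use the amenability of $\Gamma$ to invoke the Ornstein--Weiss quasi-tiling theorem. This produces, for each $m$, a $\Gamma$-invariant random partition $\mathcal{P}_m$ of $\Gamma$ into finite cells, each a translate of one of finitely many F\o lner-like shapes of diameter at least $m$. Invariance in distribution can be arranged by randomizing a deterministic quasi-tiling via translations sampled through an invariant mean on $\Gamma$, or, more cleanly, by appealing to the exact invariant F\o lner tilings of Downarowicz--Huczek--Zhang. Passing to an ergodic component if necessary, we may assume $\mathcal{P}_m$ is $\Gamma$-ergodic, and we secure the key feature that for every finite $S \subset \Gamma$ the $\mathcal{P}_m$-probability that $S$ lies in a single cell tends to $1$ as $m \to \infty$. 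Now define $\lambda_{p,m}$ as the law of the random subset $S \subset \Gamma$ obtained by sampling $\mathcal{P}_m$ and, independently for each cell $T$, including $T \subset S$ with probability $p$ (and declaring $T$ ``open'' otherwise).

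Verification of the three properties is now routine. The $\Gamma$-invariance of $\lambda_{p,m}$ follows from that of $\mathcal{P}_m$ together with the fact that independent Bernoulli coloring is preserved under relabeling of cells by the $\Gamma$-action; ergodicity follows because a Bernoulli fiber over an ergodic base is itself ergodic (indeed mixing relative to the base). Property (2) is immediate: each $\gamma \in \Gamma$ lies in a unique cell, which is closed with probability $p$. Property (3) follows since, with probability tending to $1$, a finite $S$ is contained in a single cell, in which case its coloring is monochromatic by construction; hence $\lambda_{p,m}(S \text{ is open} \cup S \text{ is closed}) \to 1$. The main obstacle is thus the first step: producing an invariant random tiling with cells of arbitrarily large diameter for a general amenable group. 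This is the substantive input, and the rest of the argument is essentially bookkeeping — though a secondary technical point is to confirm that the ergodic decomposition of the tiling does not destroy the ``big cells'' property, which follows from a Fubini-type argument picking a generic ergodic component.
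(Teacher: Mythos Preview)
Your approach is essentially correct in outline, but it takes a substantially heavier route than the paper's, and a couple of the steps you flag as routine are where the actual work would lie.

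The paper avoids tilings entirely. It takes a F{\o}lner sequence $\{F_m\}$, sets $q=(1-p)^{1/|F_m|}$, lets $Q$ be i.i.d.\ Bernoulli($q$) percolation on $\Gamma$, and declares $\gamma$ open under $\lambda_{p,m}$ if and only if $\gamma F_m \subseteq Q$. This is a local equivariant factor of an i.i.d.\ process, so invariance and ergodicity are immediate; property~(2) is the identity $q^{|F_m|}=1-p$; and property~(3) follows because for $\gamma,\gamma'\in S$ the events $\gamma F_m\subseteq Q$ and $\gamma' F_m\subseteq Q$ involve sets that nearly coincide (by the F{\o}lner condition), hence are highly correlated. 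No tiling theorem, no ergodic decomposition, no relative Bernoulli analysis.

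By contrast, your construction front-loads the difficulty into producing an invariant ergodic random F{\o}lner tiling. Neither Ornstein--Weiss nor Downarowicz--Huczek--Zhang gives this directly: the former gives quasi-tilings with overlap and leftover, the latter gives a \emph{deterministic} exact tiling. To get an invariant \emph{random} tiling you must pass to the subshift of all tilings by the DHZ shapes, invoke amenability (or compactness) to get an invariant measure, and then take an ergodic component --- none of which you spell out. Your remark about ``randomizing via an invariant mean'' does not work as stated, since means are only finitely additive. Second, your key claim that a fixed finite $S$ lies in a single tile with high probability is not a consequence of ``diameter at least $m$''; it needs the tiles to be $(S^{-1},\epsilon)$-invariant in the right (two-sided) sense, and then a short counting argument over points of a tile. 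Once these are in place your argument goes through, and the ergodicity of Bernoulli-over-ergodic is indeed standard; but the paper's construction delivers the same conclusion in a few lines with nothing beyond the definition of a F{\o}lner sequence.
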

We prove this lemma in Appendix~\ref{sec:long-range-percolation}
below. For a related result see~\cite{connes1980property}.

The limit $\lim_m\lambda_{p,m}$ is the non-ergodic percolation
$\lambda_p=p\delta_\emptyset+(1-p)\delta_\Gamma$. Clearly, the
Furstenberg entropy of the associated Bowen space is $p\cdot
h_{RW}(G,\mu)$. This is the basic intuition behind
Proposition~\ref{lem:lamplighter-uniform-realization}. However,
$\lambda_p$ is not ergodic, and the map $\lambda \mapsto
h_\mu(B(\sub{G}),\nu_\lambda)$ is not continuous, and therefore the
proof of Proposition~\ref{lem:lamplighter-uniform-realization}
requires some additional work.

Consider two events, namely that $K_S$ either includes or excludes all
the lamps in a large neighborhood of the origin; by
Lemma~\ref{lemma:long-range-percolation} above, the union of these
events nearly covers the probability space.

Consider first the case that $K_S$ includes all the lamps in a large
neighborhood of the origin. Then in the $K_S \backslash G$ Markov
chain, we ``mod out by the lamps of $S$'', so that the states of the
Markov chain do not includes the lamp configuration around the
origin. Therefore, this Markov chain resembles, for the first few
steps, the projected $\overline{\mu}$ random walk on the base group
$\Gamma$, and therefore the entropy $h_\mu(B_{K},\nu_{K})$ could be
expected to be low. Conversely, when $K_S$ excludes all the lamps in a
large neighborhood of the origin, the $K_S \backslash G$ chain
includes all the information about the lamps around the origin, and
therefore, in the first few steps, resembles the $\mu$ random walk on
$G$, and thus $h_\mu(B_{K},\nu_{K})$ could be expected to have entropy
that is close to that of the Poisson boundary, or at least that of the
limit configuration boundary.  This intuition is formalized in the
following two lemmas, which we prove below.
\begin{lemma}
  \label{lem:close-to-full}
  Let $\{S_r\}_{r=1}^\infty$ be a sequence of cofinite subsets of
  $\Gamma$ such that $\lim_rS_r = \emptyset$, in the topology of
  convergence on finite sets. Then there exists an $h_\ell(G,\mu) > 0$
  such that
  \begin{align*}
    \frac{1}{n} \lim_{r\to\infty}h_{\mu^n}\left(B_{K_{S_r}},\nu_{K_{S_r}}\right)=
    h_\ell(G,\mu),
  \end{align*}
  for all $n \ge 1$.
\end{lemma}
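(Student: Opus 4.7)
The plan is to establish existence of the limit via monotonicity of fiber entropy (eq.~\ref{eq:monotonicity}), derive the factor of $n$ from a scaling relation, and identify the $n=1$ limit with the Furstenberg entropy $h_\ell(G,\mu):=h_\mu(B_\ell,\nu_\ell)$ of the boundary constructed in Section~\ref{sec:b-ell}. First, since $S\subseteq S'$ implies $K_S\subseteq K_{S'}$, monotonicity gives $h_{\mu^n}(B_{K_S},\nu_{K_S})\ge h_{\mu^n}(B_{K_{S'}},\nu_{K_{S'}})$. The hypothesis $\lim_r S_r=\emptyset$ in the topology of finite-set convergence says that for each fixed $r_0$, the finiteness of $S_{r_0}^c$ forces $S_r\subseteq S_{r_0}$ for all sufficiently large $r$, and hence $\liminf_r h_{\mu^n}(B_{K_{S_r}},\nu_{K_{S_r}})\ge h_{\mu^n}(B_{K_{S_{r_0}}},\nu_{K_{S_{r_0}}})$ for every $r_0$. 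Taking the supremum over $r_0$ dominates $\limsup_r$ from above by $\liminf_r$, so the limit exists and equals $\sup_r h_{\mu^n}(B_{K_{S_r}},\nu_{K_{S_r}})$; applying the same argument pairwise to two valid sequences shows their suprema coincide, so the limit depends only on $G,\mu,n$.

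The factor-of-$n$ reduction uses the scaling $h_{\mu^n}(B_K,\nu_K)=n\cdot h_\mu(B_K,\nu_K)$ for each fiber, established via the chain-rule decomposition $\log(d\nu_{KZ_n}/d\nu_K)=\sum_{k=1}^n\log(d\nu_{KZ_k}/d\nu_{KZ_{k-1}})$ together with the Markov property of the induced chain on $K\backslash G$ and the definition in eq.~\ref{eq:fiber-ent}, analogously to the Kaimanovich--Vershik argument. To identify $\sup_r h_\mu(B_{K_{S_r}},\nu_{K_{S_r}})$ with $h_\mu(B_\ell,\nu_\ell)$, I would use the structure of $B_\ell$ from Section~\ref{sec:b-ell} to exhibit a compatible system of $G$-equivariant factor maps $B_\ell\to B_{K_{S_r}}$ and realise $B_\ell$ as their inverse limit; combined with monotonicity this gives $h_\mu(B_\ell,\nu_\ell)\ge\sup_r h_\mu(B_{K_{S_r}},\nu_{K_{S_r}})$, while the reverse inequality follows because $B_\ell$ is constructed from the asymptotic data already captured by the family $\{B_{K_{S_r}}\}$.

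Positivity of $h_\ell(G,\mu)$ is then inherited from the positivity of $h_\mu(B_\ell,\nu_\ell)$ to be established in Section~\ref{sec:b-ell}, where $B_\ell$ is built as an extension of the limit configuration boundary with strictly positive Furstenberg entropy under the triviality hypothesis on the projected Poisson boundary; alternatively, specialising to $S_r=\Gamma\setminus F_r$ with $\{F_r\}$ an exhaustion of $\Gamma$ by finite sets shows that $B_{K_{S_r}}$ captures nontrivial asymptotic lamp information for $r$ large, forcing $\sup_r h_\mu(B_{K_{S_r}},\nu_{K_{S_r}})>0$. The main obstacle is the identification step: constructing the compatible factor maps $B_\ell\to B_{K_{S_r}}$ and verifying the inverse-limit property require a careful analysis of how asymptotic lamp behavior along the Markov chain on $K_{S_r}\backslash G$ corresponds to the $G$-action on $B_\ell$, particularly since the induced chains are not themselves random walks on groups and so standard Poisson boundary machinery must be adapted to the Markov-chain setting.
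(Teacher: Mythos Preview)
Your strategy—existence of the limit via monotonicity, reduction to $n=1$ via a per-fiber scaling, and identification of the limit with $h_\mu(B_\ell,\nu_\ell)$ via an inverse-limit picture—is close in spirit to the paper's approach, but the per-fiber scaling step has a genuine gap.

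The identity $h_{\mu^n}(B_K,\nu_K)=n\,h_\mu(B_K,\nu_K)$ for each individual $K$ does not follow from your chain-rule decomposition. Telescoping and the martingale property $\E{\nu_{KZ_n}\mid Z_k}=\nu_{KZ_k}$ correctly yield
\[
h_{\mu^n}(B_K,\nu_K)=\sum_{k=0}^{n-1}\E{\phi(KZ_k)},\qquad \phi(Kg):=\sum_{h}\mu(h)\,D_{KL}(\nu_{Kgh}||\nu_{Kg}),
\]
but the one-step quantity $\phi$ generally depends on the coset $Kg$: the induced chain on $K\backslash G$ has no translation symmetry sending $K$ to $Kg$, so there is no reason for $\phi$ to be constant (or even $\mu$-harmonic) along trajectories. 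The Kaimanovich--Vershik argument you invoke relies on the cocycle identity for a genuine $G$-action together with $\mu$-stationarity of $\nu$, neither of which is available on $B_K$. The paper sidesteps this entirely: it proves $\lim_r h_{\mu^n}(B_{K_{S_r}},\nu_{K_{S_r}})=h_{\mu^n}(B_\ell,\nu_\ell)$ separately for each fixed $n$, and only then invokes the standard scaling $h_{\mu^n}(B_\ell,\nu_\ell)=n\,h_\ell(G,\mu)$ for the honest $(G,\mu)$-stationary space $B_\ell$.

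For the identification itself, your inverse-limit sketch is the right picture, but note first that the factor maps $\pi_r:B_\ell\to B_{K_{S_r}}$ cannot be $G$-equivariant since the $B_{K_{S_r}}$ are not $G$-spaces; what holds instead is $\pi_{r*}(g\nu_\ell)=\nu_{K_rg}$. The paper makes the passage from ``$\mathcal{F}_\ell=\sigma(\cup_r\mathcal{F}_r)$'' (Claim~\ref{claim:sigma-algebras}) to ``entropies converge'' precise in two steps: martingale convergence gives pointwise convergence of the Radon--Nikodym derivatives $d\nu_{K_rg}/d\nu_{K_r}\to dg\nu_\ell/d\nu_\ell$ (Claim~\ref{thm:rn-converges}), and then two applications of dominated convergence upgrade this to convergence of $D_{KL}$ and of $h_\mu$. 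The domination is the substantive ingredient you do not mention: it is supplied by the uniform Radon--Nikodym bounds of Lemma~\ref{lemma:mc-pb-rn-bound}, which in turn rely on the standing full-support assumption on $\mu$.
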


\begin{lemma}
  \label{lem:close-to-base}
  Let $\{S_r\}_{r=1}^\infty$ be a sequence of finite subsets of
  $\Gamma$ such that $\lim_rS_r = \Gamma$, in the topology of
  convergence on finite sets. Then
  \begin{align*}
    \lim_{n\to\infty}\frac{1}{n}\lim_{r\to\infty}h_{\mu^{n}}\left(B_{K_{S_r}},\nu_{K_{S_r}}\right)=0.
  \end{align*}
\end{lemma}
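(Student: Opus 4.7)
The plan is to upper bound $h_{\mu^n}(B_{K_{S_r}},\nu_{K_{S_r}})$ via Bowen's formula (Eq.~\ref{eq:bowen-ent}) applied to the $\mu^n$-walk: taking only the $k=1$ term of the infimum gives
\[
h_{\mu^n}(B_{K_{S_r}},\nu_{K_{S_r}}) \le \Ent{K_{S_r} Z_n},
\]
where $Z_n=(f_n,\gamma_n)$ is the $\mu$-walk at time $n$. I would then use the lamplighter multiplication (as in the proof of Claim~\ref{thm:K-conj}) to check that two elements of $G$ lie in the same left $K_{S_r}$-coset if and only if their $\Gamma$-coordinates agree and their lamp configurations agree on $\Gamma\setminus S_r$. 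Hence the coset $K_{S_r}Z_n$ is determined by the pair $(\gamma_n,\,f_n|_{\Gamma\setminus S_r})$, and so
\[
\Ent{K_{S_r}Z_n} \le \Ent{\gamma_n} + \Ent{f_n|_{\Gamma\setminus S_r}} = \Ent{\overline{\mu}^n} + \Ent{f_n|_{\Gamma\setminus S_r}}.
\]

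The heart of the proof is to show that, for each fixed $n$, $\lim_{r\to\infty}\Ent{f_n|_{\Gamma\setminus S_r}}=0$. Since $\mu$ has finite entropy, $\Ent{f_n}\le \Ent{\mu^n}\le n\,\Ent{\mu}<\infty$. Because $f_n$ has almost-surely finite support and $S_r$ eventually contains any finite subset of $\Gamma$, the random variable $V_r:=f_n|_{\Gamma\setminus S_r}$ converges almost surely to the identity configuration $e$. Writing
\[
\Ent{V_r} = \Ent{\mathbf{1}_{\{V_r\neq e\}}} + \P{V_r\neq e}\cdot \CondEnt{V_r}{V_r\neq e}
\]
and bounding $\CondEnt{V_r}{V_r\neq e}\le \CondEnt{f_n}{V_r\neq e}$ reduces matters to showing that $\sum_{f\in A_r}\P{f_n=f}\log(1/\P{f_n=f})\to 0$, where $A_r=\{V_r\neq e\}$ has $\P{A_r}\to 0$. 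This is a general tail fact for countably-supported distributions with finite entropy: for any fixed $N$, atoms of probability $\geq 1/N$ form a finite set which $A_r$ must eventually miss entirely, while atoms of probability $<1/N$ contribute at most the entropy tail $\sum_{\P{f_n=f}<1/N}\P{f_n=f}\log(1/\P{f_n=f})$, which can be made arbitrarily small by choosing $N$ large.

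Combining the above yields $\limsup_r \Ent{K_{S_r}Z_n}\le \Ent{\overline{\mu}^n}$, and therefore
\[
\limsup_{r\to\infty}\frac{1}{n}\,h_{\mu^n}(B_{K_{S_r}},\nu_{K_{S_r}}) \le \frac{1}{n}\Ent{\overline{\mu}^n}.
\]
The hypothesis that $\overline{\mu}$ has a trivial Poisson boundary on $\Gamma$ gives, via Kaimanovich--Vershik, $\frac{1}{n}\Ent{\overline{\mu}^n}\to h_{RW}(\Gamma,\overline{\mu})=0$ as $n\to\infty$, and since Furstenberg entropy is non-negative the double limit in the statement equals $0$ (interpreting the inner limit as a $\limsup$, or by passing to a nested exhausting subsequence along which the monotonicity in~(\ref{eq:monotonicity}) forces genuine convergence). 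The main obstacle is the entropy-convergence step, where $\Ent{f_n}$ itself does not vanish and one must invoke the finite-entropy tail lemma.
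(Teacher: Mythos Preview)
Your argument is correct and lands on the same endpoint as the paper's, but the middle of the proof is executed differently; there is also one citation to patch.

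The patch: Eq.~\ref{eq:bowen-ent} is stated for an IRS $\lambda$, and $\delta_{K_{S_r}}$ is not conjugation-invariant (by Claim~\ref{thm:K-conj}, $K_{S_r}$ is normal only when $S_r=\emptyset$ or $\Gamma$), so the infimum characterization is not available off the shelf for a single subgroup. The inequality $h_{\mu^n}(B_K,\nu_K)\le \Ent{KZ_n}$ you need is nonetheless true, and it is exactly what the paper extracts from Lemma~\ref{lemma:mc-pb-rn-bound}: the bound $\frac{d\nu_K}{d\nu_{Kg}}\ge P_K^n(K,Kg)$ gives $h_{\mu^n}(B_K,\nu_K)\le\sum_g\mu^n(g)\bigl(-\log P_K^n(K,Kg)\bigr)=\Ent{KZ_n}$. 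So replace your appeal to Bowen's formula by this Radon--Nikodym estimate.

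The genuine difference in execution: having arrived at $\Ent{K_{S_r}Z_n}$, the paper keeps it in the form $\sum_g\mu^n(g)\bigl(-\log P_{K_r}^n(K_r,K_rg)\bigr)$, proves the pointwise convergence $P_{K_r}^n(K_r,K_rg)\to\overline{\mu}^n(\overline{g})$ as a separate lemma (Claim~\ref{thm:transition-probs}), and passes to the limit by dominated convergence with dominating function $-\log\mu^n(g)$. You instead identify $K_{S_r}Z_n$ with the pair $(\gamma_n,\,f_n|_{\Gamma\setminus S_r})$, split $\Ent{K_{S_r}Z_n}\le \Ent{\overline{\mu}^n}+\Ent{f_n|_{\Gamma\setminus S_r}}$, and kill the second summand with your finite-entropy tail lemma. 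Your route is more overtly information-theoretic and bypasses the auxiliary transition-probability claim; the paper's route is more analytic and perhaps a line shorter. Both finish identically via Kaimanovich--Vershik and the hypothesis that $(\Gamma,\overline{\mu})$ has trivial Poisson boundary.
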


The following corollary is a direct consequence of these two lemmas.
\begin{corollary}
  \label{cor:2-lemmas}

  For every $\eps>0$ there exists a finite set $S\subset \Gamma$ and
  $n \in \N$ such that both
  \begin{align*}
    \frac{1}{n} h_{\mu^n}\left(B_{K_{S}},\nu_{K_{S}}\right) \leq \eps
     \quad\quad\mbox{and}\quad\quad
     \frac{1}{n} h_{\mu^n}\left(B_{K_{S^c}},\nu_{K_{S^c}}\right) \geq h_\ell(G,\mu) - \eps,
  \end{align*}
  where $S^c$ is the complement of $S$ in $\Gamma$.

\end{corollary}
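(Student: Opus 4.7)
The plan is to deduce the corollary as a direct simultaneous application of Lemma~\ref{lem:close-to-full} and Lemma~\ref{lem:close-to-base}, using a single exhaustion of $\Gamma$ by finite subsets together with its complementary sequence.

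First, I would fix an exhaustion $\{S_r\}_{r=1}^\infty$ of $\Gamma$ by finite subsets --- for instance, balls of growing radius in a Cayley graph of $\Gamma$ (noting that $\Gamma$ is finitely generated as a quotient of $G$), or simply initial segments of an enumeration of $\Gamma$. This sequence satisfies $\lim_r S_r = \Gamma$ in the topology of convergence on finite sets, so it falls under the hypotheses of Lemma~\ref{lem:close-to-base}. The complementary sequence $\{S_r^c\}$ then consists of cofinite subsets of $\Gamma$ with $\lim_r S_r^c = \emptyset$ in the same topology, since every fixed $\gamma \in \Gamma$ eventually belongs to $S_r$; hence $\{S_r^c\}$ satisfies the hypotheses of Lemma~\ref{lem:close-to-full}.

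Now fix $\eps > 0$. Lemma~\ref{lem:close-to-base} provides
\[
\lim_{n \to \infty} \lim_{r \to \infty} \frac{1}{n}\, h_{\mu^n}\!\left(B_{K_{S_r}},\nu_{K_{S_r}}\right) = 0,
\]
so I can first pick an $n$ large enough that $\lim_{r\to\infty} (1/n)\, h_{\mu^n}(B_{K_{S_r}},\nu_{K_{S_r}}) < \eps$, and then an index $r_1$ beyond which $(1/n)\, h_{\mu^n}(B_{K_{S_r}},\nu_{K_{S_r}}) \le \eps$. For the \emph{same} $n$, Lemma~\ref{lem:close-to-full} applied to the cofinite sequence $\{S_r^c\}$ gives
\[
\lim_{r \to \infty} \frac{1}{n}\, h_{\mu^n}\!\left(B_{K_{S_r^c}},\nu_{K_{S_r^c}}\right) = h_\ell(G,\mu),
\]
so I can choose $r_2$ beyond which $(1/n)\, h_{\mu^n}(B_{K_{S_r^c}},\nu_{K_{S_r^c}}) \ge h_\ell(G,\mu) - \eps$. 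Setting $r = \max(r_1, r_2)$ and $S = S_r$ then yields both desired inequalities at once.

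The only real subtlety, hardly an obstacle, is the order of limits in Lemma~\ref{lem:close-to-base}: since the limit is iterated, one must commit to a specific $n$ \emph{before} choosing how far along the exhaustion to go. The same $n$ is then forced on the application of Lemma~\ref{lem:close-to-full}, but since that lemma's conclusion holds for every fixed $n \ge 1$, there is no tension. All genuine difficulty is contained in the two preceding lemmas; the corollary is merely the observation that a single exhaustion does double duty, its complements delivering the high-entropy side and the sets themselves delivering the low-entropy side.
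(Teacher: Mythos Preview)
Your proposal is correct and matches the paper's own treatment: the paper simply states that the corollary is ``a direct consequence of these two lemmas'' without giving further detail, and your argument spells out exactly the intended deduction --- use a single increasing exhaustion $\{S_r\}$ for Lemma~\ref{lem:close-to-base} and its cofinite complements for Lemma~\ref{lem:close-to-full}, choose $n$ first, then $r$ large enough for both inequalities.
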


We are now ready to prove
Proposition~\ref{lem:lamplighter-uniform-realization}. The idea of the
proof is as follows.  By Corollary~\ref{cor:2-lemmas}, for finite $S$
large enough, the entropy of $(B_{K_S},\nu_{K_S})$, the Poisson
boundary of the induced Markov chain on $K_S \backslash G$, is close
to zero whenever the event ``$S$ is open'' occurs.  On the other hand,
if the event ``$S$ is closed'' occurs, then the entropy is close to
$h_\ell(G,\mu)$. Now, using Lemma~\ref{lemma:long-range-percolation},
we can find a percolation measure such that the event ``$S$ is open''
occurs with probability almost $1-p$ and ``$S$ is closed'' occurs with
probability almost $p$. It follows that the entropy is close to $p
\cdot h_\ell(G,\mu)$.

\begin{proof}[Proof of Proposition~\ref{lem:lamplighter-uniform-realization}]
  Fix $p\in(0,1)$. Let $\{\lambda_{p,m}\}$ be the set of measures
  defined in Lemma~\ref{lemma:long-range-percolation}, and let
  $(B(\sub{G}),\nu_{\lambda_{p,m}})$ be the Bowen space associated
  with $\lambda_{p,m}$; we here identify the percolation measure
  $\lambda_{p,m}$ with the associated IRS measure, and denote the
  latter too by $\lambda_{p,m}$.

  We shall prove the claim by showing that for every $\eps>0$, and for
  every $\mu\in\mathcal{P}(G)$ that satisfies the conditions of the
  claim, it holds that for $m$ large enough
  \begin{align*}
    p \cdot h_\ell(G,\mu)-\eps \leq h_\mu(B(Sub_G),\nu_{\lambda_{p,m}}) \leq
    p \cdot h_\ell(G,\mu)+\eps.
  \end{align*}  
  
  Let $\eps>0$ and let $\mu \in \mathcal{P}(G)$ satisfy the conditions
  of the claim. Denote $h_\ell = h_\ell(G,\mu)$, and let $S$ be a
  finite subset of $\Gamma$ such that
  \begin{align*}
    \frac{1}{n} h_{\mu^n}\left(B_{K_{S}},\nu_{K_{S}}\right) \leq
    \eps
    \quad\quad \mbox{and}\quad\quad
    \frac{1}{n} h_{\mu^n}\left(B_{K_{S^c}},\nu_{K_{S^c}}\right)
    \geq h_\ell(G,\mu) - \eps,
  \end{align*}
  where $S^c$ is the complement of $S$ in $\Gamma$.  The existence of
  this set is guaranteed by Corollary~\ref{cor:2-lemmas}. Let $\eps'$
  be determined later, and apply
  Lemma~\ref{lemma:long-range-percolation} to $S$ to find $m\in \N$
  large enough such that
  \begin{align*}
     (1-p)-\eps' \leq \lambda_{p,m} (\mbox{$S$ is open}) \leq 1-p
  \end{align*}
  and
  \begin{align*}
    p-\eps' \leq \lambda_{p,m} (\mbox{$S$ is closed}) \leq p.
  \end{align*}

  Recall that the $\mu^n$ Furstenberg entropy of
  $(B(\sub{G}),\nu_{\lambda_{p,m}})$ is given by
  \begin{align*}
    h_{\mu^n}\left(B(\sub{G}),\nu_{\lambda_{p,m}}\right) =
    \int_{\sub{G}}h_{\mu^n}(B_K,\nu_K) d\lambda_{p,m}(K).
  \end{align*}
  We shall integrate separately over the event that $S$ is open
  ($\cA_1$), the event that $S$ is closed ($\cA_2$) and the
  complements of these events ($\cA_3$) and bound the integral over
  $\cA_1$ and $\cA_3$ from above and over $\cA_2$ from both above and
  below.

  By the definitions of these events, for every $K_{S_1} \in \cA_1$ it
  holds that $S$ is a subset of $S_1$, and for every $K_{S_2} \in
  \cA_2$ it holds that the complement of $S$ is a superset of
  $S_2$. Hence, by Corollary~\ref{cor:2-lemmas}, and by the
  monotonicity of $h_\mu(B_K,\nu_K)$ (Eq.~\ref{eq:monotonicity}), we
  have that
  \begin{align}
    \label{eq:A1}
    \frac{1}{n} \int_{\cA_1}h_{\mu^n}(B_K,\nu_K)d\lambda_{p,m}(K)
    &\le \frac{1}{n} \int_{\cA_1}h_{\mu^n}(B_{K_S},\nu_{K_S})d\lambda_{p,m}(K) \notag \\
    &= \frac{1}{n}
    h_{\mu^n}(B_{K_S},\nu_{K_S})\int_{\cA_1}d\lambda_{p,m}(K) \notag \\
    &\le \eps \cdot \lambda_{p,m}(\cA_1)\notag\\
    &\le \eps \cdot (1-p).
  \end{align}
  and
  \begin{align}
    \label{eq:A2-b}
    \frac{1}{n} \int_{\cA_2}h_{\mu^n}(B_K,\nu_K) d\lambda_{p,m}(K)
    &\geq
    \frac{1}{n} \int_{\cA_2}h_{\mu^n}(B_{K_{S^c}},\nu_{K_{S^c}}) d\lambda_{p,m}(K) \notag\\
    &=
    \frac{1}{n}h_{\mu^n}(B_{K_{S^c}},\nu_{K_{S^c}}) \int_{\cA_2}
    d\lambda_{p,m}(K) \notag \\
    &\geq (h_{\ell}-\eps)
    \cdot \lambda_{p,m}(\cA_2)  \notag\\
    &\ge (h_{\ell}-\eps) \cdot (p-\eps').
  \end{align}

  By Claim~\ref{claim:sigma-algebras}, and using the monotonicity of
  Kullback-Leibler divergence, $h_{\mu^n}(B_K,\nu_K)$ is uniformly
  bounded by $n\cdot h_\ell$.  Hence
  \begin{align}
    \label{eq:A3}
    \frac{1}{n}\int_{\cA_3}h_{\mu^n}(B_K,\nu_K) d\lambda_{p,m}(K) \le
    h_\ell \cdot \lambda_{p,m}(\cA_3) \leq h_{\ell} \cdot 2\eps',
  \end{align}
  as $\lambda_{p,m}(\cA_3) \leq 2\eps'$. Likewise
  \begin{align}
    \label{eq:A2-a}
    \frac{1}{n}\int_{\cA_2}h_{\mu^n}(B_K,\nu_K)
    d\lambda_{p,m}(K) \le h_{\ell} \cdot \lambda_{p,m}(\cA_2) \le h_{\ell} \cdot p.
  \end{align}
    
  Collecting terms and substituting $\eps' = \eps\cdot p \cdot
  (1-p)/(2h_\ell)$, we get that
  \begin{align*}
    \frac{1}{n}\int_{Sub_G}h_{\mu^n}(B_K,\nu_K)
    d\lambda_{p,m}(K) \in \left[ p \cdot h_\ell -\eps,
    p \cdot h_\ell + \eps\right].
  \end{align*}  
  But the left hand side is equal to $h_\mu(B(\sub{G}),\lambda_{p,m})$, and so
  the proof is complete.
\end{proof}

\subsection{The boundary $(B_\ell,\nu_\ell)$ and a proof of
  Lemma~\ref{lem:close-to-full}}
\label{sec:b-ell}

To prove Lemma~\ref{lem:close-to-full}, we introduce the boundary
$(B_\ell,\nu_\ell)$ of the $(G,\mu)$ random walk. We show that this
boundary is an extension of the limit configuration boundary, and as
such has entropy $h_\ell(G,\mu) = h_\mu(B_\ell,\nu_\ell) \geq
h_\lamps(G,\mu)$. We then show that when $S$ excludes a large neighborhood of
lamps around the origin then $h_\mu(B_{K_S},\nu_{K_S})$ is close to
$h_\ell(G,\mu)$.  While it may be the case that $(B_\ell,\nu_\ell)$ is, in
fact, equal to $\pb(G,\mu)$, the Poisson boundary of $(G,\mu)$, we are
not able to show this in full generality (see the discussion below in
Section~\ref{sec:lamplighter-dense-realization}).

Let $S$ be a cofinite subset of $\Gamma$.  A state of the $\mu$ Markov
chain $K_S \backslash G$ can be thought of as the pair consisting of
the finite configuration of the lamps outside $S$, and the position of
the lighter.

Let $(B_{K_S},\nu_{K_S})$ be the Poisson boundary of the $\mu$ Markov
chain on $K_S \backslash G$. Note that $B_{K_S}$ is not a $G$-space,
but it is a factor of $\pb(G,\mu)$, and as such we can think of it as
a sub-sigma-algebra $\mathcal{F}_S$ of the shift invariant
sigma-algebra on $\Omega$, which, however, is not $G$-invariant. We
define $\mathcal{F}_\ell$ as the sigma-algebra generated by the union
of all these sigma-algebras:
\begin{align*}
  \mathcal{F}_\ell = \sigma\left(\bigcup_{|S^C| < \infty}\mathcal{F}_S\right).
\end{align*}
Note that while $\mathcal{F}_S$ is not $G$-invariant,
$\mathcal{F}_\ell$ is, since $g=(f,\gamma)$ acts on $\mathcal{F}_S$ by
shifting it to $\mathcal{F}_{\gamma S}$. Therefore $\mathcal{F}_\ell$
is $G$-invariant and shift invariant, and therefore its Mackey
realization, which we denote by $(B_\ell,\nu_\ell)$, is a
$(G,\mu)$-boundary. As such, it is a factor of $\pb(G,\mu)$. Denote
its entropy by $h_\ell(G,\mu) = h_\mu(B_\ell,\nu_\ell)$.

Since the limit configuration boundary is generated by cylinders of the
final states of finite sets of lamps, then it is a factor of
$B_\ell$. The following claim is a consequence of these definitions.
\begin{claim}
  \label{thm:h-ell-h-lamps}
  \begin{align*}
    h_\ell(G,\mu) \geq h_\lamps(G,\mu).
  \end{align*}
\end{claim}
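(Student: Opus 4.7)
The plan is to show that the limit configuration boundary $(L^\Gamma,\lampbnd_*\mathbb{P})$ is a $G$-factor of $(B_\ell,\nu_\ell)$; the claim will then follow from the general fact that Furstenberg entropy is monotone under $G$-factors.

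First I would identify the sigma-algebra on $\Omega$ that corresponds, via the Mackey construction, to the limit configuration boundary. By Erschler's theorem, for each $\gamma \in \Gamma$ the limit $\lampbnd(\omega_1,\omega_2,\ldots)(\gamma) = \lim_n f_n(\gamma)$ exists almost surely, and the limit configuration factor is generated (up to null sets) by the cylinder sets that prescribe the terminal values of the lamps on any finite set $F \subset \Gamma$. Call this shift-invariant, $G$-invariant sigma-algebra $\mathcal{F}_\lamps$.

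Next I would show that $\mathcal{F}_\lamps \subseteq \mathcal{F}_\ell$. For a finite $F \subset \Gamma$, set $S = \Gamma \setminus F$, which is cofinite. The state of the induced Markov chain on $K_S \backslash G$ at time $n$ records the lamp configuration on $F$ together with the walker's position, so the terminal lamp values $\{\lim_n f_n(\gamma)\}_{\gamma \in F}$ are measurable with respect to the tail of this chain, and hence lie in $\mathcal{F}_S$. Taking the union over all finite $F$, every generator of $\mathcal{F}_\lamps$ lies in some $\mathcal{F}_S$ with $S$ cofinite, so $\mathcal{F}_\lamps \subseteq \mathcal{F}_\ell$.

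Passing to Mackey realizations, this inclusion of $G$-invariant shift-invariant sigma-algebras gives a $G$-equivariant factor map $(B_\ell,\nu_\ell) \to (L^\Gamma,\lampbnd_*\mathbb{P})$. Since Furstenberg entropy is monotone under $G$-factors (the Kullback--Leibler divergence decreases under factors, as recalled in the preliminaries), we conclude
\begin{align*}
  h_\ell(G,\mu) = h_\mu(B_\ell,\nu_\ell) \geq h_\mu(L^\Gamma,\lampbnd_*\mathbb{P}) = h_\lamps(G,\mu).
\end{align*}
The only subtle point, which I would check carefully, is the measurability step: one needs to know that although each individual $\mathcal{F}_S$ fails to be $G$-invariant, the terminal lamp-value functions on a fixed finite set $F$ really are tail-measurable for the $K_{\Gamma \setminus F} \backslash G$ chain, which is exactly what Erschler's almost-sure stabilization provides.
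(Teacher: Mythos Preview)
Your proposal is correct and follows essentially the same approach as the paper: the paper simply notes that the limit configuration boundary is generated by cylinders of the final states of finite sets of lamps and is therefore a factor of $B_\ell$, from which the entropy inequality follows by monotonicity. Your argument fills in exactly these details, identifying for each finite $F$ the cofinite $S = \Gamma \setminus F$ and observing that the terminal lamp values on $F$ are $\mathcal{F}_S$-measurable.
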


Let $S_r\subset\Gamma$ be a sequence of cofinite subsets such that
$\lim_rS_r=\emptyset$. Consider the sequence of subgroups
$K_r=K_{S_r}$, and let $\mathcal{F}_r$ be the sigma-algebra of the
Poisson boundary of the $\mu$ Markov chain on $K_r \backslash G$.
\begin{claim}
  \label{claim:sigma-algebras}
  \begin{align*}
    \mathcal{F}_\ell = \sigma\left(\bigcup_{r=1}^\infty\mathcal{F}_r\right).
  \end{align*}
\end{claim}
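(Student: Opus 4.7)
The plan is to prove the two inclusions separately. The direction $\sigma(\bigcup_r \mathcal{F}_r) \subseteq \mathcal{F}_\ell$ is immediate from the definitions: each $S_r$ is cofinite by hypothesis, so each $\mathcal{F}_r = \mathcal{F}_{S_r}$ is one of the sigma-algebras appearing in the union $\bigcup_{\text{cofinite } S}\mathcal{F}_S$ that generates $\mathcal{F}_\ell$.

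For the reverse inclusion it suffices to show that $\mathcal{F}_S \subseteq \sigma(\bigcup_r \mathcal{F}_r)$ for every cofinite $S \subseteq \Gamma$. The key ingredient is a monotonicity statement: if $S' \subseteq S$, then $K_{S'} \le K_S$, because any configuration supported on $S'$ is in particular supported on $S$. The quotient map $K_{S'} \backslash G \to K_S \backslash G$ then intertwines the induced Markov chains, so the chain on $K_S \backslash G$ is a measurable factor of the chain on $K_{S'} \backslash G$. By functoriality of the Poisson boundary, $(B_{K_S}, \nu_{K_S})$ is a factor of $(B_{K_{S'}}, \nu_{K_{S'}})$, and correspondingly the shift-invariant sigma-algebras satisfy $\mathcal{F}_S \subseteq \mathcal{F}_{S'}$.

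To conclude, I would use the hypothesis that $\lim_r S_r = \emptyset$ in the topology of convergence on finite sets. Since $S$ is cofinite, its complement $S^c$ is finite, and so for all sufficiently large $r$ we have $S^c \cap S_r = \emptyset$, i.e., $S_r \subseteq S$. By the monotonicity just established, $\mathcal{F}_S \subseteq \mathcal{F}_{S_r}$ for all such $r$, and in particular $\mathcal{F}_S \subseteq \sigma(\bigcup_r \mathcal{F}_r)$. Since this holds for every cofinite $S$, we obtain $\mathcal{F}_\ell \subseteq \sigma(\bigcup_r \mathcal{F}_r)$.

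The only nontrivial step is the monotonicity $S' \subseteq S \Rightarrow \mathcal{F}_S \subseteq \mathcal{F}_{S'}$, but this is a formal consequence of the functoriality of Poisson boundaries under Markov-chain factor maps and presents no real obstacle; the rest is a direct unwinding of the definition of convergence on finite sets.
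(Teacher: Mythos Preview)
Your proof is correct and follows essentially the same approach as the paper: both arguments establish the two inclusions separately, with the forward inclusion immediate from cofiniteness of each $S_r$, and the reverse inclusion obtained by observing that any fixed cofinite $S$ eventually contains $S_r$ once $r$ is large enough (since $S^c$ is finite and $S_r \to \emptyset$). You are actually more explicit than the paper about the monotonicity step $S' \subseteq S \Rightarrow \mathcal{F}_S \subseteq \mathcal{F}_{S'}$, which the paper uses implicitly.
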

\begin{proof}
  Denote
  $\mathcal{F}_\infty=\sigma\left(\cup_{r=1}^\infty\mathcal{F}_r\right)$,
  and recall that $\mathcal{F}_\ell = \sigma\left(\cup_{|S^C| < \infty}\mathcal{F}_S\right)$.  Since
  $\mathcal{F}_r=\mathcal{F}_{S_r}$, where $S_r$ is finite, it follows
  that $\mathcal{F}_r \subseteq \mathcal{F}_\ell$, and so
  $\cup_{r=1}^\infty\mathcal{F}_r \subseteq \mathcal{F}_\ell$ and
  \begin{align*}
    \mathcal{F}_\infty =
    \sigma\left(\cup_{r=1}^\infty\mathcal{F}_{S_r}\right) \subseteq
    \mathcal{F}_\ell.
  \end{align*}

  Conversely, note that for each finite $S \subset \Gamma$ there
  exists an $r$ such that $S$ is a subset of the complement in
  $\Gamma$ of $S_r$, since $\lim_rS_r \to \emptyset$. Hence
  $\mathcal{F}_S \subseteq \mathcal{F}_{S_r} \subseteq
  \mathcal{F}_\infty$, $\cup_{|S^C| < \infty}\mathcal{F}_S \subseteq
  \mathcal{F}_\infty$, and the claim follows.
\end{proof}

Let $\bnd_\ell : \Omega \to B_\ell$ be the boundary map associated
with $B_\ell$, let $\Omega_r=(K_r\backslash G)^\N$, let
$\bnd_r:\Omega_r\to B_{K_r}$ be the boundary map of the induced Markov
chain on $K_r \backslash G$, and let $\pi_r:B_\ell \to B_{K_r}$ be the
natural factor. Then $\pi_r$ is similar to $G$-equivariant maps, in
the sense that $\pi_{r*}g\nu_\ell = \nu_{K_rg}$ for all $g \in G$.
\begin{claim}
  \label{thm:rn-converges}
  For a fixed $g \in G$, and $\nu_\ell$-almost every $b \in B_\ell$,
  \begin{align*}
    \lim_{r\to\infty}\frac{d\nu_{K_r}}{d\nu_{K_rg}}(\pi_r(b))=\frac{d\nu_\ell}{dg\nu_\ell}(b).
  \end{align*}
\end{claim}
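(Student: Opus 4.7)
The plan is to reinterpret the left-hand side as a conditional expectation on $B_\ell$ and then apply martingale convergence. Set $f := \frac{dg\nu_\ell}{d\nu_\ell}$, a nonnegative $\nu_\ell$-integrable function on $B_\ell$ with $\int f\,d\nu_\ell = 1$. The first step is to establish the identity
\[
\frac{d\nu_{K_r g}}{d\nu_{K_r}}(\pi_r(b)) = \CondE{f}{\mathcal{F}_r}(b) \quad \text{for $\nu_\ell$-a.e.\ $b$,}
\]
where $\mathcal{F}_r \subseteq \mathcal{F}_\ell$ denotes the sub-sigma-algebra generated by $\pi_r$. This rests on the general measure-theoretic fact that Radon-Nikodym derivatives behave as conditional expectations under push-forwards: for any measurable $p:(Y,\mu)\to(Y',p_*\mu)$ and measure $\mu'$ equivalent to $\mu$, one has $\frac{dp_*\mu'}{dp_*\mu}\circ p = \CondE{\frac{d\mu'}{d\mu}}{p^{-1}\mathcal{B}(Y')}$ $\mu$-almost surely, which one verifies by testing against bounded $p^{-1}\mathcal{B}(Y')$-measurable functions. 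I would apply this with $p = \pi_r$, $\mu = \nu_\ell$, $\mu' = g\nu_\ell$, using the identity $\pi_{r*}(g\nu_\ell) = \nu_{K_r g}$ recorded in the paragraph immediately preceding the claim.

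The second step is an application of Levy's upward martingale convergence theorem. By Claim~\ref{claim:sigma-algebras}, $\sigma\bigl(\bigcup_r \mathcal{F}_r\bigr) = \mathcal{F}_\ell$. Since each $S_r^c$ is a finite subset of $\Gamma$ and $\lim_r S_r = \emptyset$ in the topology of convergence on finite sets, I would pass to a subsequence along which $\{S_r^c\}$ is monotonically increasing; then $\{K_{S_r}\}$ becomes a decreasing chain of subgroups, so $\{\mathcal{F}_r\}$ becomes an increasing filtration whose union still generates $\mathcal{F}_\ell$. Levy's upward theorem then delivers $\CondE{f}{\mathcal{F}_r} \to \CondE{f}{\mathcal{F}_\ell} = f$ $\nu_\ell$-almost surely, which combined with the identity from the first step is exactly the claim.

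The main point requiring care is the conditional-expectation identification of the first step, which is a standard but subtle change-of-variables calculation; the martingale step is routine once the filtration has been arranged to be monotone. Monotonicity is not automatic from the hypotheses on $\{S_r\}$, but passing to a nested subsequence is harmless for the downstream use of the claim (in the proof of Lemma~\ref{lem:close-to-full}), where only \emph{some} sequence $S_r \to \emptyset$ is needed. The uniform two-sided bounds of Lemma~\ref{lemma:mc-pb-rn-bound} furthermore guarantee that the Radon-Nikodym derivatives under consideration are bounded above and below, which is convenient for any subsequent dominated-convergence argument but is not needed for the pointwise convergence itself.
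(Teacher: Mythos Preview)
Your proposal is correct and takes essentially the same route as the paper: both reduce the claim to martingale convergence via Claim~\ref{claim:sigma-algebras}, the paper differing only cosmetically by expressing the Radon--Nikodym derivatives as (a constant times) the conditional probabilities $\CondP{Z_1=g}{\mathcal{F}_r}$ and $\CondP{Z_1=g}{\mathcal{F}_\ell}$ rather than as conditional expectations of $f$. Your treatment of the monotonicity of the filtration (passing to a nested subsequence) is in fact more careful than the paper's, which simply invokes Claim~\ref{claim:sigma-algebras} without comment.
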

\begin{proof}
  Since
  \begin{align*}
    \CondP{Z_1=g}{\bnd_r(Z_1,Z_2,\ldots)=\pi_r(b)} = \P{Z_1=g}\frac{d\nu_{K_rg}}{d\nu_{K_r}}(b)
  \end{align*}
  and
  \begin{align*}
    \CondP{Z_1=g}{\bnd_\ell(Z_1,Z_2,\ldots)=b} = \P{Z_1=g}\frac{dg\nu_\ell}{d\nu_\ell}(b),
  \end{align*}
  it is enough to show that for $\nu_\ell$-almost every $b\in B_\ell$,
  \begin{align*}
    \lim_{r\to\infty}\CondP{Z_1=g}{\bnd_r(Z_1,Z_2,\ldots)=\pi_r(b)} =
    \CondP{Z_1=g}{\bnd_\ell(Z_1,Z_2,\ldots)=b}.
  \end{align*}
  This, however, is a consequence of Claim~\ref{claim:sigma-algebras},
  and the claim follows.
\end{proof}
\begin{claim}
  \label{clm:lim_kl}
  For a fixed $g \in G$,
  \begin{align*}
    \lim_{r\to\infty}D_{KL}\left(\nu_{K_rg}||\nu_{K_r}\right)=D_{KL}\left(g\nu_\ell||\nu_\ell\right).
  \end{align*}
\end{claim}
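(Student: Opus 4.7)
The plan is to obtain the limit by bounded convergence, leveraging the pointwise convergence of Radon-Nikodym derivatives that was just established together with the uniform bound from Lemma~\ref{lemma:mc-pb-rn-bound}.

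First, I would rewrite both KL divergences in the usual integral form:
\begin{align*}
  D_{KL}\!\left(\nu_{K_rg}\|\nu_{K_r}\right) = \int_{B_{K_r}} \log\frac{d\nu_{K_rg}}{d\nu_{K_r}}(x)\,d\nu_{K_rg}(x),
\end{align*}
and analogously $D_{KL}(g\nu_\ell\|\nu_\ell) = \int_{B_\ell}\log\frac{dg\nu_\ell}{d\nu_\ell}(b)\,dg\nu_\ell(b)$. Since $\pi_{r*}g\nu_\ell = \nu_{K_rg}$, a change of variables pushes the $r$-th integral up to $B_\ell$:
\begin{align*}
  D_{KL}\!\left(\nu_{K_rg}\|\nu_{K_r}\right) = \int_{B_\ell} \log\frac{d\nu_{K_rg}}{d\nu_{K_r}}\!\left(\pi_r(b)\right)\,dg\nu_\ell(b).
\end{align*}
Now both sides are integrals against the single reference measure $g\nu_\ell$, so it suffices to commute the limit with the integral.

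To justify commuting, I would combine two ingredients. Pointwise convergence of integrands: by Claim~\ref{thm:rn-converges}, the integrand converges $\nu_\ell$-a.e.\ to $\log\frac{dg\nu_\ell}{d\nu_\ell}(b)$, and because $g\nu_\ell$ is mutually absolutely continuous with $\nu_\ell$, the convergence also holds $g\nu_\ell$-a.e. Uniform boundedness: since by the reduction in Section~\ref{sec:preliminaries} we may assume $\mu$ is supported everywhere, both $\mu(g)$ and $\mu(g^{-1})$ are strictly positive, and Lemma~\ref{lemma:mc-pb-rn-bound} yields
\begin{align*}
  \mu(g^{-1}) \le \frac{d\nu_{K_rg}}{d\nu_{K_r}}(\pi_r(b)) \le \frac{1}{\mu(g)},
\end{align*}
so the logarithm is bounded in absolute value by the constant $\max\{|\log\mu(g)|,|\log\mu(g^{-1})|\}$, uniformly in $r$ and $b$. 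Bounded convergence then gives the desired equality.

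The only mildly nontrivial point is checking that the same reasoning applies in the limit, i.e.\ that $\mu(g^{-1}) \le \frac{dg\nu_\ell}{d\nu_\ell}(b) \le 1/\mu(g)$ as well — but this is either deduced by passing the uniform bound through the pointwise limit, or argued directly by applying the proof of Lemma~\ref{lemma:mc-pb-rn-bound} to the Markov chain whose Poisson boundary is $B_\ell$ (so in particular the integrand on the right is itself bounded, and the RHS $D_{KL}$ is finite). No other obstacle is anticipated; the whole argument is bounded convergence applied to the change-of-variables formula.
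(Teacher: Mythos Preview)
Your proposal is correct and follows essentially the same route as the paper: push the integral up to $B_\ell$ via $\pi_r$, invoke Claim~\ref{thm:rn-converges} for pointwise convergence, use Lemma~\ref{lemma:mc-pb-rn-bound} (with $\mu$ assumed fully supported) for the uniform bound, and conclude by dominated convergence. The only cosmetic difference is that you integrate against $g\nu_\ell$ (using $\pi_{r*}g\nu_\ell=\nu_{K_rg}$) whereas the paper integrates against $\nu_\ell$; your version is arguably cleaner and your remark on passing from $\nu_\ell$-a.e.\ to $g\nu_\ell$-a.e.\ via mutual absolute continuity is a detail the paper leaves implicit.
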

\begin{proof}
  Since $\nu_{K_rg}=\pi_{r*}(g\nu_\ell)$ we get that 
  \begin{align*}
    D_{KL}\left(\nu_{K_rg}||\nu_{K_r}\right) &=
    \int_{B_{K_r}}-\log\frac{d\nu_{K_r}}{d\nu_{K_rg}}(x)d\nu_{K_rg}(x)\\
    & = \int_{B_\ell}-\log\frac{d\nu_{K_r}}{d\nu_{K_rg}}(\pi_r(b))dg\nu_\ell(b).
  \end{align*}

  By Claim~\ref{thm:rn-converges}, the functions $f_r(b) =
  -\log\frac{d\nu_{K_r}}{d\nu_{K_rg}}(\pi_r(b))$ converge pointwise to
  $f(b)=-\log\frac{d\nu_\ell}{dg\nu_\ell}(b)$. Hence the claim will
  follow by the dominated convergence theorem, provided that we can
  show that the functions $f_r$ are uniformly bounded. By
  Lemma~\ref{lemma:mc-pb-rn-bound} we have that
  \begin{align}
    \label{eq:rn-bound}
    \mu\left(g^{-1}\right)\le\frac{d\nu_{K_rg}}{d\nu_{K_r}}(x)\le\frac{1}{\mu(g)}.
  \end{align}

  Since $\mu$ is without loss of generality supported everywhere (see
  Section~\ref{sec:stationary}),  $\mu(g) > 0$ and
  $\mu\left(g^{-1}\right)>0$, and so
  \begin{align*}
    -\log\mu\left(g^{-1}\right) \ge
    -\log\frac{d\nu_{K_rg}}{d\nu_{K_r}}(x) \ge -\log\frac{1}{\mu\left(g\right)}.
  \end{align*}
  Therefore the functions $f_r$ are uniformly bounded.
\end{proof}

\begin{proof}[Proof of Lemma~\ref{lem:close-to-full}]
  We prove the lemma for $n=1$, and note that for arbitrary $n$ the
  proof follows by the same argument, since
  $h_{\mu^n}(B_\ell,\nu_\ell) = n \cdot h_\ell(G,\mu)$.
  
  Note that by the monotonicity of Kullback-Leibler divergence,
  $D_{KL}\left(\nu_{K_rg}||\nu_{K_r}\right)\le
  D_{KL}\left(g\nu_\ell||\nu_\ell\right)$.  Note also that the
  finiteness of $h_\ell(G,\mu)$ implies that
  $D_{KL}\left(g\nu_\ell||\nu_\ell\right)$ is $\mu$-integrable, as
  function of $g$. Hence, by the dominated convergence theorem,
  \begin{align*}
    \lim_{r\to\infty}h_{\mu}(B_{K_r},\nu_{K_r})
    &= \lim_{r\to\infty}\sum_{g\in G}\mu(g)D_{KL}(\nu_{K_rg}||\nu_{K_r})\\
    & = \sum_{g\in G}\mu(g)\lim_{r\to\infty}D_{KL}(\nu_{K_rg}||\nu_{K_r})\\
    & = \sum_{g\in G}\mu(g)D_{KL}(g\nu_\ell||\nu_\ell)\\
    & = h_\ell(G,\mu),
  \end{align*}
  where the third equality follows from Claim~\ref{clm:lim_kl}.
\end{proof}

\subsection{Proof of Lemma~\ref{lem:close-to-base}}

Let $\{S_r\}_{r=1}^\infty$ be a sequence of subsets of $\Gamma$ with
$\lim_rS_r=\Gamma$, consider the sequence of subgroups $K_r=K_{S_r}$,
and consider the induced Markov chains on $K_{r}\backslash G$. As
$r\to\infty$, we are ``modding out by more and more lamps'', and so
the Markov chains resemble more and more closely the projected walk on
$\Gamma$ itself, which has zero entropy, since it has a trivial
Poisson boundary. Indeed, in this section we prove that the entropies
$h_\mu(B_{K_r},\nu_{K_r})$ converge to zero.

Recall (Section~\ref{sec:bowen-spaces}) that $P_K(Kg,Kh)$ is the
transition probability from $Kg$ to $Kh$ in the induced Markov chain
on $K \backslash G$.  Recall also that the projection $\pi : G \to \Gamma$
is defined by $\pi(f,\gamma) = \gamma$, and that we denote
$\overline{g} = \pi(g)$ and $\overline{\mu} = \pi_*\mu$.
\begin{claim}
  \label{thm:transition-probs}
  For all $g\in G$ it holds that
  \begin{align*}
    \lim_{r\to\infty}P_{K_r}(K_{r},K_{r}g) = \overline{\mu}\left(\overline{g}\right).
  \end{align*}
\end{claim}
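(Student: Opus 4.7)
The plan is to compute $P_{K_r}(K_r, K_r g)$ explicitly and then pass to the limit by dominated convergence.

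First, I would unpack the definition. Since the induced chain on $K_r \backslash G$ arises from right multiplication by the i.i.d.\ $\mu$-distributed increments, the one-step transition from $K_r$ to $K_r g$ is
\begin{align*}
P_{K_r}(K_r, K_r g) = \sum_{g' \in K_r g} \mu(g').
\end{align*}

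Next, I would identify the coset $K_r g$ concretely using the semidirect product structure. Write $g = (\phi, \gamma)$ with $\phi \in \finconf(L,\Gamma)$. Any element of $K_r$ has the form $(f, e_\Gamma)$ with $\supp f \subseteq S_r$, and the product rule yields $(f, e_\Gamma)(\phi, \gamma) = (f\phi, \gamma)$. So as $f$ ranges over $\finconf(L,S_r)$, the product $f\phi$ ranges exactly over the finitely supported configurations $\psi$ that agree with $\phi$ outside $S_r$ (given such a $\psi$, set $f = \psi\phi^{-1}$, which is supported in $S_r$ since $\psi = \phi$ on $S_r^c$). Consequently,
\begin{align*}
P_{K_r}(K_r, K_r g) = \sum_{\psi \in \finconf(L,\Gamma)} \mathbf{1}_{\{\psi|_{S_r^c} = \phi|_{S_r^c}\}}\, \mu\bigl((\psi, \gamma)\bigr),
\end{align*}
where $S_r^c = \Gamma \setminus S_r$.

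Finally, I would pass to the limit. The hypothesis $\lim_r S_r = \Gamma$ in the topology of convergence on finite sets means that for every finite $F \subset \Gamma$ we have $F \cap S_r^c = \emptyset$ for all sufficiently large $r$. Fix any $\psi \in \finconf(L,\Gamma)$; the set where $\psi$ and $\phi$ can disagree is contained in the finite set $\supp\psi \cup \supp\phi$, so eventually $\psi|_{S_r^c} = \phi|_{S_r^c}$ and the indicator equals $1$. The summands are dominated uniformly in $r$ by the $\psi$-summable function $\psi \mapsto \mu((\psi, \gamma))$, so the dominated convergence theorem gives
\begin{align*}
\lim_{r \to \infty} P_{K_r}(K_r, K_r g) = \sum_{\psi \in \finconf(L,\Gamma)} \mu\bigl((\psi, \gamma)\bigr) = \mu\bigl(\pi^{-1}(\gamma)\bigr) = \overline{\mu}(\overline{g}),
\end{align*}
as required. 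The claim is essentially a direct computation; there is no real obstacle beyond carefully unwinding the coset $K_r g$ via the semidirect product and justifying the interchange of limit and sum.
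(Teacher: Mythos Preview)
Your proof is correct and follows essentially the same route as the paper's own argument. The paper parametrizes the sum over the coset $K_r g$ by the $K_r$-element $f \in \finconf(L,S_r)$, writing $P_{K_r}(K_r,K_rg) = \sum_{f \in \finconf(L,S_r)} \mu\bigl((f,e_\Gamma)g\bigr)$, whereas you reparametrize by $\psi = f\phi$; after this change of variables the two computations are identical, and both pass to the limit via dominated convergence (the paper leaves this step implicit).
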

It follows directly that
\begin{align*}
  \lim_{r\to\infty}P_{K_r}^n(K_{r},K_{r}g) = \overline{\mu}^n\left(\overline{g}\right).
  \end{align*}
\begin{proof}
  Recall that $\finconf(L, S)$ is the set of finite lamp
  configurations supported on $S$. By definition,
  \begin{align*}
    P_{K_r}(K_r,K_rg) &=\sum_{k\in K_r}\mu\left(kg\right)\\
    &=  \sum_{f\in \finconf(L, S_r)}\mu\left((f, e_\Gamma)g\right)
  \end{align*}

  Observe that $K_\Gamma$ is a normal subgroup in $G$ and that
  $\Gamma=K_\Gamma \backslash G$. Hence
  \begin{align*}
    \overline{\mu}\left(\overline{g}\right) &=  \sum_{f\in
      \finconf(L, \Gamma)}\mu\left((f,e_\Gamma)g\right)\\
    &= \sum_{f\in \finconf(L, \Gamma)}\mu\left(g(f,e_\Gamma)\right).
  \end{align*}

  Now, since $\lim_rS_r=\Gamma$, it follows that $\lim_r\finconf(L,
  S_r)= \finconf(L, \Gamma)$ and hence
  \begin{align*}
    \lim_{r\to\infty}P_{K_{r}}\left(K_{r},K_{r}g\right)
    & =  \lim_{r\to\infty}\sum_{f\in \finconf(L,S_{r})}\mu\left(\left(f,e_\Gamma\right)g\right)\\
    & =  \sum_{f\in \finconf(L,\Gamma)}\mu\left(\left(f,e_\Gamma\right)g\right)\\
    & =  \overline{\mu}\left(\overline{g}\right).
  \end{align*}
\end{proof}

\begin{proof}[Proof of Lemma~\ref{lem:close-to-base}]
  For fixed $n,r\in\mathbb{N}$,
  \begin{align*}
    h_{\mu^n}\left(B_{K_{r}},\nu_{K_{r}}\right)
    &= \sum_{g\in G}\mu^n(g)\int_{B_{K_r}}-\log\frac{d\nu_{K_r}}{d\nu_{K_rg}}(b)d\nu_{K_rg}(b)\\
    & \le \sum_{g\in G}\mu^n(g)\int_{B_{K_r}}-\log P_{K_r}^n(K_r,K_rg)d\nu_{K_rg}(b)\\
    & = \sum_{g\in G}\mu^n(g)\cdot-\log P_{K_r}^n(K_r,K_rg),
  \end{align*}
  where the inequality is an application of
  Lemma~\ref{lemma:mc-pb-rn-bound}.

  By Claim~\ref{thm:transition-probs}, $\lim_rP_{K_r}^n(K_r,K_rg) =
  \overline{\mu}(\overline{g})$. By $0\le-\log
  P^n_{K_r}\left(K_{r},K_{r}g\right) \le -\log\mu^{n}\left(g\right)$ and
  the finiteness of $H\left(\mu\right)$ we can use the dominated
  convergence theorem to arrive at
  \begin{align*}
    \frac{1}{n}\lim_{r\to\infty}h_{\mu^n}\left(B_{K_r},\nu_{K_r}\right)
    & \le \frac{1}{n}\sum_{g\in G}\mu^{n}(g)\cdot-\log\overline{\mu}^{n}\left(\overline{g}\right)\\
    & = \frac{1}{n}\sum_{\gamma\in\Gamma}-\overline{\mu}^{n}(\gamma)\log\overline{\mu}^{n}(\gamma)
  \end{align*}
  Taking the limit as $n$ tends to infinity we get that
  \begin{align*}
    \lim_{n \to
      \infty}\frac{1}{n}\lim_{r\to\infty}h_{\mu^{n}}\left(B_{K_{r}},\nu_{K_{r}}\right)
    \le  \lim_{n \to \infty}\frac{1}{n} \Ent{\overline{\mu}^n} = 0,
  \end{align*}
  where the final equality is again a consequence of the fact that
  the $\overline{\mu}$ random walk on $\Gamma$ has a trivial Poisson
  boundary~\cite{kaimanovich1983random}.
\end{proof}

\subsection{Dense entropy realization for some lamplighter groups}
\label{sec:lamplighter-dense-realization}

The boundary $B_\ell$ is an extension of the limit configuration
boundary and a factor of the Poisson boundary. It follows that when
the limit configuration boundary is equal to the Poisson boundary than
so is $B_\ell$, and $h_\ell = h_{RW}$.

Thus, a direct consequence of
Theorem~\ref{thm:lamplighter-realization} is the following theorem.
\begin{theorem}
  \label{thm:lamplighter-dense-realization}
  Let $G = L \wr \Gamma$ with non-trivial $L$, let $\mu$ generate $G$,
  and let the $(G,\mu)$ limit configuration boundary equal its Poisson
  boundary. Then there exists a dense set $H \subseteq
  [0,h_{RW}(G,\mu)]$ such that for every $h \in H$ there exists a
  $(G,\mu)$ ergodic Bowen space $(X,\nu)$ with $h_\mu(X,\nu) = h$.
\end{theorem}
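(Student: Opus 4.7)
The plan is to deduce Theorem~\ref{thm:lamplighter-dense-realization} as a short corollary of Theorem~\ref{thm:lamplighter-realization} (equivalently, of Proposition~\ref{lem:lamplighter-uniform-realization}). The key observation is that under the present hypothesis, the upper bound $h_\lamps(G,\mu)$ appearing in the statement of Theorem~\ref{thm:lamplighter-realization} can be replaced by $h_{RW}(G,\mu)$.

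Concretely, I would first record the chain of $G$-factors
\[
  \text{limit configuration boundary} \;\longleftarrow\; (B_\ell,\nu_\ell) \;\longleftarrow\; \pb(G,\mu),
\]
where the first arrow is the construction of $B_\ell$ in Section~\ref{sec:b-ell} and the second expresses the fact, proved there, that $(B_\ell,\nu_\ell)$ is a $(G,\mu)$-boundary and therefore a factor of the Poisson boundary. This yields
\[
  h_\lamps(G,\mu)\;\le\;h_\ell(G,\mu)\;\le\;h_\mu(\pb(G,\mu))\;=\;h_{RW}(G,\mu),
\]
where the last equality is the Kaimanovich--Vershik formula. The hypothesis that the limit configuration boundary coincides with the Poisson boundary forces all three factors to be isomorphic, so all three entropies equal $h_{RW}(G,\mu)$.

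Next I would apply Theorem~\ref{thm:lamplighter-realization}: this produces a dense set $H\subseteq [0,h_\lamps(G,\mu)]$ of Furstenberg entropies realized by ergodic stationary spaces, and inspection of its proof via Proposition~\ref{lem:lamplighter-uniform-realization} shows that these realizing spaces are precisely the ergodic Bowen spaces $(B(\sub{G}),\nu_{\lambda_{p,m}})$. Combining this with the equality $h_\lamps(G,\mu)=h_{RW}(G,\mu)$ from the previous paragraph gives a dense set $H\subseteq[0,h_{RW}(G,\mu)]$ of entropies, each realized by an ergodic Bowen space, which is exactly the desired conclusion.

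The one point requiring care is that Theorem~\ref{thm:lamplighter-realization} presupposes that $\mu$ has finite entropy and that the projected walk $(\Gamma,\overline{\mu})$ has trivial Poisson boundary, whereas Theorem~\ref{thm:lamplighter-dense-realization} does not state these explicitly. The finite entropy hypothesis is harmless (without it the statement is either vacuous or must be interpreted with $h_{RW}=\infty$, and the Kaimanovich--Vershik identity is unavailable). Triviality of $\pb(\Gamma,\overline{\mu})$ should be viewed as implicit in the coincidence hypothesis: $\pb(\Gamma,\overline{\mu})$ is a factor of $\pb(G,\mu)$, and under the identification of $\pb(G,\mu)$ with the limit configuration boundary it would have to be measurable with respect to the stabilized lamp configuration, which depends $G$-equivariantly only on the lamp coordinates and not on any asymptotic direction of the walker. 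I expect this bookkeeping step — rather than any genuine analytic difficulty — to be the only real thing to watch.
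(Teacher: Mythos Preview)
Your approach is essentially identical to the paper's: the paper also observes that $(B_\ell,\nu_\ell)$ is sandwiched between the limit configuration boundary and the Poisson boundary, so the coincidence hypothesis forces $h_\ell=h_{RW}$, and then invokes Theorem~\ref{thm:lamplighter-realization} directly. In fact you are more explicit than the paper, which simply declares the result ``a direct consequence of Theorem~\ref{thm:lamplighter-realization}'' without spelling out the factor chain.

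Your final paragraph correctly flags a genuine looseness that the paper itself does not address: Theorem~\ref{thm:lamplighter-realization} assumes finite entropy of $\mu$ and triviality of $\pb(\Gamma,\overline{\mu})$, while the statement of Theorem~\ref{thm:lamplighter-dense-realization} omits both. Your sketch that triviality of $\pb(\Gamma,\overline{\mu})$ should follow from the coincidence hypothesis is on the right track---$\pb(\Gamma,\overline{\mu})$ is the space of ergodic components of the $\finconf(L,\Gamma)$-action on $\pb(G,\mu)$, and one would need to argue this action is ergodic on the limit configuration boundary---but this is not quite the one-line bookkeeping you suggest. In practice the paper sidesteps this by only applying the theorem to base groups $\Gamma=\Z^d$, where every generating measure has trivial Poisson boundary; so the hypotheses of Theorem~\ref{thm:lamplighter-realization} are tacitly in force.
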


The relation between the Poisson boundary and the limit configuration
boundary is a subject of active
research. Kaimanovich~\cite{kaimanovich2000poisson} shows that the
Poisson boundary coincides with the limit configuration boundary on
$((\Z/2\Z) \wr Z^d, \mu)$ when $\mu$ has a first moment and the
projected random walk on $\Z^d$ has a
drift. Erschler~\cite{erschler2011poisson} shows that these boundaries
are equal for $((\Z/2\Z) \wr \Z^d, \mu)$, when $d \geq 5$ and $\mu$
has with finite third moment. An additional equality result on
non-amenable base groups is given by Karlsson and
Woess~\cite{karlsson2007poisson}.

\section{No entropy gap for virtually free groups}
\label{sec:lifting}
In this section we prove our main result,
Theorem~\ref{thm:main-realization}, which states that when $G$ is
virtually free and $\mu$ has finite first moment then $(G,\mu)$ does
not have an entropy gap.

The general idea is to ``lift'' the no entropy gap result from
lamplighters to virtually free groups. First, we establish this result
for free groups in Section~\ref{sec:free-groups}, and then lift it to
finite index supergroups.

To relate the stationary actions of a group and a finite index
subgroup, we consider the hitting measure on the subgroup.  In
Section~\ref{sec:hitting-measures}, we show that measures with finite
first moment have hitting measures with finite first moment. 

Then, in Section~\ref{sub:Lifting-Bowen-spaces} we discuss a standard
construction which lifts IRS measures from finite index subgroups, and
hence also lifts the associated Bowen spaces.  For the case that the
finite index subgroup is normal, we relate, in
Section~\ref{sec:entropy-of-lifting}, the entropies these Bowen
spaces.  Finally, in Section~\ref{sec:proof-of-main-thm}, we bring
these ideas together to prove a no entropy gap result for virtually
free groups.

\subsection{From lamplighters to free groups}
\label{sec:free-groups}
In this section we show that as extensions of lamplighters, free
groups admit a result that is parallel to
Proposition~\ref{lem:lamplighter-uniform-realization}, the stronger version
of Theorem~\ref{thm:lamplighter-realization} that we proved above.

The following claim is standard.
\begin{claim}
  \label{lem:quotient-preserves-ffm}
  Let $G$ be finitely generated, and let $G\xrightarrow{\varphi}Q$ be
  a group homomorphism onto $Q$. If $\mu\in\mathcal{P}(G)$ has finite
  first moment then $\varphi_{*}\mu\in\mathcal{P}\left(Q\right)$ has
  finite first moment.
\end{claim}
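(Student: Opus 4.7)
The plan is to exploit the fact that a surjective group homomorphism is $1$-Lipschitz with respect to appropriately chosen word metrics. Let $\mS$ be a finite symmetric generating set for $G$, and set $\mS' = \varphi(\mS) \setminus \{e_Q\}$. Since $\varphi$ is surjective, $\mS'$ is a finite symmetric generating set for $Q$. For any $g \in G$, writing $g = s_1 \cdots s_n$ as a minimal-length word in $\mS$ (so $n = |g|_\mS$) gives $\varphi(g) = \varphi(s_1) \cdots \varphi(s_n)$, a word of length at most $n$ in $\mS'$ (dropping any letters equal to $e_Q$). Therefore
\begin{align*}
|\varphi(g)|_{\mS'} \leq |g|_\mS \quad \text{for all } g \in G.
\end{align*}

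Using the definition of the push-forward measure and this Lipschitz estimate,
\begin{align*}
\sum_{q \in Q} \varphi_*\mu(q)\, |q|_{\mS'}
= \sum_{g \in G} \mu(g)\, |\varphi(g)|_{\mS'}
\leq \sum_{g \in G} \mu(g)\, |g|_\mS < \infty,
\end{align*}
where the last inequality uses the hypothesis that $\mu$ has finite first moment with respect to $\mS$. Since, as noted earlier in the paper, having finite first moment on $Q$ is independent of the choice of finite symmetric generating set, this completes the argument. There is no real obstacle here; the only point to be careful about is ensuring $\mS'$ is genuinely generating and finite, which is immediate from surjectivity and finiteness of $\mS$.
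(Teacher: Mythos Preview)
Your argument is correct and is exactly the standard one; the paper itself does not supply a proof, declaring the claim to be standard. The only edge case worth noting is that $\mS'$ could be empty when $Q$ is trivial, but then the claim is vacuous.
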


\begin{lemma}
  \label{lem:quotient-preserves-entropy}
  Let $G$ be finitely generated, let $G\xrightarrow{\varphi}Q$ be a
  group homomorphism onto $Q$, and let $\mu\in\mathcal{P}(G)$. Let
  $(B(\sub{Q}),\nu_\lambda)$ be an ergodic $(Q,\varphi_*\mu)$ Bowen
  space. Then $(B(\sub{G}),\nu_{\varphi^{-1}\lambda})$ is an ergodic
  $(G,\mu)$ Bowen space, and furthermore
  \begin{align*}
    h_\mu(B(\sub{G}),\nu_{\varphi^{-1}\lambda}) = h_{\varphi_{*}\mu}(B(\sub{Q}),\nu_\lambda).    
  \end{align*}
\end{lemma}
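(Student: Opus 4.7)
The plan is to identify the Bowen space on $G$ built from the preimage IRS with the Bowen space on $Q$ built from $\lambda$, by observing that the induced Markov chains on $\varphi^{-1}(H) \backslash G$ and on $H \backslash Q$ are naturally isomorphic. First I would check that the pullback map $\Phi : \sub{Q} \to \sub{G}$ sending $H \mapsto \varphi^{-1}(H)$ is well-defined (the preimage of a subgroup is a subgroup containing $\ker\varphi$) and that it is $G$-equivariant, where $G$ acts on $\sub{Q}$ via $\varphi$ and by conjugation on $\sub{Q}$: indeed, $\varphi^{-1}(H)^g = g\varphi^{-1}(H)g^{-1} = \varphi^{-1}\bigl(\varphi(g)H\varphi(g)^{-1}\bigr) = \varphi^{-1}(H^{\varphi(g)})$. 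Hence $\varphi^{-1}\lambda := \Phi_*\lambda$ is an IRS on $G$, which is $G$-ergodic since $\lambda$ is $Q$-ergodic and $\varphi$ is surjective.

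Next I would identify the induced Markov chains. For $K = \varphi^{-1}(H)$, the map $Kg \mapsto H\varphi(g)$ is a well-defined bijection $K \backslash G \to H \backslash Q$ (since $\ker\varphi \subseteq K$), and the $\mu$-step transition probabilities on $K \backslash G$ pull back to the $\varphi_*\mu$-step transition probabilities on $H \backslash Q$:
\begin{align*}
 P_K(Kg, Kgh) \;=\; \sum_{k \in K} \mu(kh) \;=\; \sum_{q \in H\varphi(h)} \varphi_*\mu(q) \;=\; P_H(H\varphi(g), H\varphi(gh)).
\end{align*}
Consequently the Poisson boundaries of the two induced chains coincide: $(B_K, \nu_K) \cong (B_H, \nu_H)$, and this identification carries the translated measures $\nu_{Kg}$ to $\nu_{H\varphi(g)}$. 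In particular, since $g$ acts on the Bowen space by sending the fiber over $K$ to the fiber over $K^g$ together with the translation $\nu_K \mapsto \nu_{Kg}$, and the analogous statement holds for the $Q$-action on $B(\sub{Q})$ via $\varphi(g)$, the map $(K, x) \mapsto (\Phi^{-1}(K), x)$ intertwines the two $G$-actions (with $G$ acting on $B(\sub{Q})$ through $\varphi$). Ergodicity of $(B(\sub{G}), \nu_{\varphi^{-1}\lambda})$ then follows from ergodicity of the $(Q,\varphi_*\mu)$-space $(B(\sub{Q}), \nu_\lambda)$, because $\varphi$ is onto.

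Finally, I would compute the Furstenberg entropies fiber-by-fiber using the decomposition in Eq.~\ref{eq:bowen-space-entropy}. Fixing $H \in \sub{Q}$ and $K = \varphi^{-1}(H)$, the identification above gives
\begin{align*}
 h_\mu(B_K, \nu_K)
  &= \sum_{g \in G} \mu(g) \int_{B_K} -\log \frac{d\nu_K}{d\nu_{Kg}}(x)\, d\nu_{Kg}(x) \\
  &= \sum_{g \in G} \mu(g) \int_{B_H} -\log \frac{d\nu_H}{d\nu_{H\varphi(g)}}(y)\, d\nu_{H\varphi(g)}(y) \\
  &= \sum_{q \in Q} (\varphi_*\mu)(q) \int_{B_H} -\log \frac{d\nu_H}{d\nu_{Hq}}(y)\, d\nu_{Hq}(y) \;=\; h_{\varphi_*\mu}(B_H, \nu_H),
\end{align*}
where in the third line we group the summation by fibers of $\varphi$. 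Integrating over $H$ with respect to $\lambda$ and using Eq.~\ref{eq:bowen-space-entropy} on both sides yields
\begin{align*}
 h_\mu(B(\sub{G}), \nu_{\varphi^{-1}\lambda}) \;=\; \int_{\sub{G}} h_\mu(B_K, \nu_K)\, d\varphi^{-1}\lambda(K) \;=\; \int_{\sub{Q}} h_{\varphi_*\mu}(B_H, \nu_H)\, d\lambda(H) \;=\; h_{\varphi_*\mu}(B(\sub{Q}), \nu_\lambda),
\end{align*}
as desired.

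The only real subtlety is the intertwining of the $G$-actions on the two Bowen spaces: one must check that, under the identification $B_K \cong B_H$, the induced $G$-action on $B(\sub{G})$ (which sends the fiber over $K$ to the fiber over $K^g$ via a translation of the boundary map) matches the $G$-action on $B(\sub{Q})$ obtained by pulling back the $Q$-action through $\varphi$. Once this bookkeeping is done, both ergodicity and the entropy equality fall out formally from the fact that the induced Markov chains on $K \backslash G$ and $H \backslash Q$ are isomorphic as $\mu$- and $\varphi_*\mu$-chains respectively.
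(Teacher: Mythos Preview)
Your argument is correct and follows essentially the same route as the paper: both verify that $\varphi^{-1}:\sub{Q}\to\sub{G}$ is $G$-equivariant, identify the induced Markov chains on $\varphi^{-1}(H)\backslash G$ and $H\backslash Q$, and conclude that the two Bowen spaces are $G$-isomorphic, with the paper then invoking the general identity $h_\mu=h_{\varphi_*\mu}$ for pulled-back $(Q,\varphi_*\mu)$-stationary spaces while you verify it fiber-by-fiber via Eq.~\ref{eq:bowen-space-entropy}. One harmless slip: your displayed formula $P_K(Kg,Kgh)=\sum_{k\in K}\mu(kh)$ is only valid for $g=e$ or $K$ normal (in general the sum runs over $K^{g^{-1}}$), but the chain isomorphism $Kg\mapsto H\varphi(g)$ still intertwines transitions since $\varphi^{-1}\bigl(\varphi(g)^{-1}H\varphi(g')\bigr)=g^{-1}Kg'$, so the argument is unaffected.
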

\begin{proof}
  Note that $G$ acts naturally on $\sub{Q}$ through $\varphi$. Hence
  $\sub{Q}$ is a $G$-space.  Since $\varphi^{-1} : \sub{Q} \to
  \sub{G}$ is $G$-equivariant, then $(\sub{G}, \varphi^{-1}_*\lambda)$
  is a $G$-factor of $(\sub{Q},\lambda)$. Since the latter is
  invariant and ergodic, it follows that the former is too, and hence
  is an ergodic $G$ IRS measure. Furthermore, since $\varphi(\varphi^{-1}(K))
  = K$, the two spaces are $G$-isomorphic.

  The same can also be said for the spaces of induced random walks on
  $K \backslash Q$ and $\varphi^{-1}(K) \backslash G$, and therefore
  the induced Markov chains are also isomorphic. Finally, since, as a
  map between $(K \backslash Q)^\N \to (\varphi^{-1}(K) \backslash
  G)^\N$, $\varphi^{-1}$ is shift invariant, it follows that $(B_K,
  \nu_K)$ is $G$-isomorphic to
  $(B_{\varphi^{-1}(K)},\nu_{\varphi^{-1}(K)})$, and so the Bowen
  spaces $(B(\sub{Q}),\nu_{\varphi_*\lambda})$ and
  $(B(\sub{G}),\nu_{\varphi^{-1}\lambda})$ are $G$-isomorphic.
  
  To see the equality in entropies, note that in general, every
  $(Q,\varphi_*\mu)$-stationary space is also $(G,\mu)$-stationary,
  and
  \begin{align*}
    h_{\mu}(X,\nu) = h_{\varphi_*\mu}(X,\nu).
  \end{align*}
\end{proof}

Consider the canonical lamplighter $(\Z/2\Z) \wr \Z^3$. Since any
generating random walk on $\Z^3$ is transient (see., e.g., Proposition
3.20 in~\cite{woess2000random}), by
Kaimanovich~\cite{kaimanovich1991poisson}, for any finite first moment
$\mu$, it holds that $h_\ell((\Z/2\Z) \wr \Z^3,\mu) \ge
h_\lamps((\Z/2\Z) \wr \Z^3,\mu) >0$. Therefore,
Claim~\ref{lem:quotient-preserves-ffm} and
Lemma~\ref{lem:quotient-preserves-entropy}, together with
Proposition~\ref{lem:lamplighter-uniform-realization}, yield the
following proposition.
\begin{proposition}
  \label{lem:freegroup-uniform-realization}
  Let $G$ be a finitely generated extension of $(\Z/2\Z) \wr \Z^3$,
  with $\varphi : G \to (\Z/2\Z) \wr \Z^3$ the quotient map. Then
  there exists a family of $G$-ergodic invariant random subgroup
  measures $\{\lambda_{p,m}\,:\,p \in(0,1),m \in \N\}$ such that, for
  every generating measure $\mu \in \mathcal{P}(G)$ with finite first
  moment it holds that
  \begin{align*}
    \lim_{m \to \infty} h_\mu(B(\sub{G}),\nu_{\lambda_{p,m}}) =
    p \cdot h_\ell(\varphi G, \varphi_*\mu),
  \end{align*}
  where $h_\ell(\varphi G, \varphi_*\mu) > 0$.  
\end{proposition}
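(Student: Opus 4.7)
The plan is to reduce everything to Proposition~\ref{lem:lamplighter-uniform-realization} by pushing the measure $\mu$ forward to the lamplighter $H := (\Z/2\Z)\wr\Z^3$ and pulling the resulting IRS family back to $G$ via Lemma~\ref{lem:quotient-preserves-entropy}. A key feature of Proposition~\ref{lem:lamplighter-uniform-realization} is that the family $\{\lambda_{p,m}\}$ of $H$-ergodic IRSs it produces depends only on $H$, not on the measure, so the lifted family $\{\varphi^{-1}\lambda_{p,m}\}$ on $G$ inherits the same universality with respect to $\mu$.

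First I would verify the hypotheses of Proposition~\ref{lem:lamplighter-uniform-realization} for $(H, \varphi_*\mu)$. Since $\mu$ generates $G$ and $\varphi$ is onto, $\varphi_*\mu$ generates $H$; by Claim~\ref{lem:quotient-preserves-ffm} it has finite first moment, and hence finite entropy. Composing $\varphi$ with the natural projection $H\to\Z^3$ yields a generating walk on $\Z^3$, whose Poisson boundary is trivial by Kaimanovich--Vershik~\cite{kaimanovich1983random}, since $\Z^3$ is abelian. Proposition~\ref{lem:lamplighter-uniform-realization} then provides, for each $p\in(0,1)$ and $m\in\N$, an $H$-ergodic IRS $\lambda_{p,m}$ on $H$ satisfying
\begin{align*}
\lim_{m\to\infty} h_{\varphi_*\mu}\bigl(B(\sub{H}),\nu_{\lambda_{p,m}}\bigr) = p\cdot h_\ell(H,\varphi_*\mu).
\end{align*}

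Next, I would apply Lemma~\ref{lem:quotient-preserves-entropy} to each $\lambda_{p,m}$: its pullback $\varphi^{-1}\lambda_{p,m}$ is a $G$-ergodic IRS on $G$, and the corresponding Bowen space has Furstenberg entropy equal to that of the $(H,\varphi_*\mu)$ Bowen space over $\lambda_{p,m}$. Passing to the limit $m\to\infty$ transfers the displayed convergence from $H$ to $G$, yielding the identity asserted in the proposition.

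Finally, to establish $h_\ell(H,\varphi_*\mu)>0$, I would invoke Erschler's theorem: the projected walk on $\Z^3$ is a finite first moment generating walk, hence transient (cf.\ Proposition~3.20 in~\cite{woess2000random}), so by~\cite{erschler2011poisson} the limit configuration boundary has positive entropy, and Claim~\ref{thm:h-ell-h-lamps} gives $h_\ell \ge h_\lamps > 0$. Every step is a direct appeal to a previously established result, so I do not expect a real obstacle; the one point requiring minor care is ensuring that Erschler's finite first moment hypothesis holds on $H$ rather than on $G$, which is precisely the content of Claim~\ref{lem:quotient-preserves-ffm}.
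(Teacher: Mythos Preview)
Your proposal is correct and follows essentially the same route as the paper: push $\mu$ forward via $\varphi$, verify the hypotheses of Proposition~\ref{lem:lamplighter-uniform-realization} on $H=(\Z/2\Z)\wr\Z^3$ using Claim~\ref{lem:quotient-preserves-ffm} and the triviality of the Poisson boundary on $\Z^3$, then pull the resulting IRS family back using Lemma~\ref{lem:quotient-preserves-entropy}, with positivity of $h_\ell$ coming from Erschler's theorem together with transience on $\Z^3$. The paper presents this as a one-paragraph deduction immediately preceding the proposition; your write-up is simply a more detailed unpacking of the same chain of lemmas.
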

Since $(\Z/2\Z) \wr \Z^3$ can be generated as a group by a set of
four generators, this holds for $F_n$, with $n \geq 4$.

\subsection{Hitting measures and finite first moments}
\label{sec:hitting-measures}
\begin{lemma}
  \label{thm:theta-is-finite-first-moment}

  Let $G$ be a finitely generated group, and let $\Gamma\le G$ with
  $[G:\Gamma]<\infty$. If $\mu\in\mathcal{P}(G)$ has finite first
  moment, then the hitting measure $\theta\in \mathcal{P}(\Gamma)$
  also has finite first moment.
\end{lemma}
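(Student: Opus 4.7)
The plan is to bound $|Z_\tau|$ by the sum of the step lengths up to time $\tau$ and apply Wald's identity, exploiting the already-known fact that $\E{\tau}=[G:\Gamma]$.

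More precisely, fix a finite symmetric generating set $\mS$ of $G$. Because $\Gamma$ has finite index in $G$, the inclusion $\Gamma \hookrightarrow G$ is a quasi-isometry: $\Gamma$ admits a finite symmetric generating set $\mS'$, and there is a constant $C>0$ such that
\begin{align*}
  |\gamma|_{\mS'} \le C \cdot |\gamma|_{\mS} + C \quad\text{for all }\gamma\in \Gamma.
\end{align*}
Hence it suffices to show that $\E{|Z_\tau|_\mS}<\infty$, and then the finite first moment of $\theta$ follows since
\begin{align*}
  \sum_{\gamma\in\Gamma}\theta(\gamma)|\gamma|_{\mS'} = \E{|Z_\tau|_{\mS'}}
  \le C \cdot \E{|Z_\tau|_\mS} + C.
\end{align*}

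Next, write $Z_\tau = X_1 X_2 \cdots X_\tau$ where the $X_i$'s are i.i.d.\ with law $\mu$. By the triangle inequality for the word metric,
\begin{align*}
  |Z_\tau|_\mS \le \sum_{i=1}^{\tau}|X_i|_\mS.
\end{align*}
Now $\tau$ is a stopping time with respect to the filtration $\mathcal{G}_n=\sigma(X_1,\ldots,X_n)$, the random variables $|X_i|_\mS$ are i.i.d.\ and nonnegative with $\E{|X_1|_\mS}<\infty$ by the finite first moment hypothesis on $\mu$, and $\E{\tau}=[G:\Gamma]<\infty$ by the result recalled in Section~\ref{sec:subgroups}. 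Wald's identity therefore applies and yields
\begin{align*}
  \E{|Z_\tau|_\mS}\le \E{\sum_{i=1}^{\tau}|X_i|_\mS} = \E{\tau}\cdot \E{|X_1|_\mS} = [G:\Gamma]\cdot \E{|X_1|_\mS} < \infty,
\end{align*}
which completes the proof.

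There is no real obstacle here; the only small care-points are verifying that the inclusion $\Gamma \hookrightarrow G$ is a quasi-isometry (so that it does not matter whether we measure word length in $G$ or in $\Gamma$) and checking the hypotheses of Wald's identity, both of which are standard. The essential content is the earlier identity $\E{\tau}=[G:\Gamma]$; given that, the rest is a one-line Wald computation.
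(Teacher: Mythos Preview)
Your proof is correct and lands on exactly the same bound $\E{|Z_\tau|_\mS} \le [G:\Gamma]\cdot \E{|X_1|_\mS}$ that the paper obtains. The paper's route is a close cousin but packaged differently: rather than bounding $|Z_\tau|$ by $\sum_{i=1}^\tau |X_i|$ and invoking Wald, it forms the submartingale $M_n = nC_1 - |Z_n|$ (with $C_1 = \E{|X_1|_\mS}$), checks the bounded-increment hypothesis $\CondE{|M_{n+1}-M_n|}{Z_1,\dots,Z_n}\le 2C_1$, and applies the optional stopping theorem from Durrett to get $\E{M_\tau}\ge 0$. Both arguments rest on the same two ingredients---the triangle inequality for the word metric and the identity $\E{\tau}=[G:\Gamma]$---and yield the same numerical bound; your Wald formulation is simply the more direct of the two, since it skips the submartingale verification.
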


We prove this lemma in Appendix~\ref{sec:hitting-measures-proof}.

By Proposition~\ref{lem:freegroup-uniform-realization} and
Lemma~\ref{thm:theta-is-finite-first-moment} we conclude the
following.  Let $G$ be a group with $F_n$ as a finite index
subgroup. Let $\mu\in\mathcal{P}(G)$ be a finite first moment
generating measure, and consider its hitting measure
$\theta\in\mathcal{P}(F_n)$. Then $(F_n,\theta)$ has no entropy gap.
In the next sections, we use this fact to prove our no entropy gap
result for $(G,\mu)$.

\subsection{Lifting Bowen spaces from lattices}
\label{sub:Lifting-Bowen-spaces}

The following construction applies to a more general setting, where
$G$ is a locally compact group and $\Gamma$ is a lattice in $G$ (see,
e.g.,~\cite{stuck1994stabilizers}).  That is, there exists a
$G$-invariant measure $\eta \in \mathcal{P}(G/\Gamma)$.

Denote by $\irs\left(\Gamma\right)$ the set of all $\Gamma$ invariant
random subgroup measures.  Let $\lambda\in \irs\left(\Gamma\right)$. Then
$\lambda$ is $\Gamma$-invariant but not, in general,
$G$-invariant. Note, however, that if $g_1\Gamma=g_2\Gamma$ then there
exists a $\gamma\in\Gamma$ such that $g_1=g_2\gamma$. Hence
$g_1\lambda=g_2\gamma\lambda=g_2\lambda$. Therefore, the $G$-action on
$\lambda$ is constant on cosets of $\Gamma$, and the measure
$(g\Gamma)\lambda$ is well defined for every $g\Gamma \in G / \Gamma$.

Denote by $\eta*\lambda$ the measure
\begin{align*}
  \eta*\lambda = \int_{G/\Gamma}(g\Gamma)\lambda
  d\eta(g\Gamma).
\end{align*}
The following claim is straightforward.
\begin{claim}
  \label{clm:lifting-irs}
  If $\lambda \in \irs(\Gamma)$ then $\eta*\lambda \in \irs(G)$.
\end{claim}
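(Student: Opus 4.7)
The plan is to verify the two assertions of the claim separately: that $\eta*\lambda$ is a $G$-invariant probability measure on $\sub{G}$ (i.e., a valid IRS), and that it is ergodic. Since $\lambda$ is $\Gamma$-invariant under conjugation, the measure $g\lambda$ depends only on the coset $g\Gamma$, so the integrand in the definition of $\eta*\lambda$ is well-defined; as an average of probability measures it is itself a probability measure on $\sub{G}$.

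For $G$-invariance, I would pick $h \in G$ and compute
\begin{align*}
  h(\eta*\lambda) = \int_{G/\Gamma} (hg\Gamma)\lambda\, d\eta(g\Gamma),
\end{align*}
and then use the $G$-invariance of $\eta$ on $G/\Gamma$ (changing variables $g\Gamma \mapsto h^{-1}g\Gamma$) to recover $\eta*\lambda$.

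For ergodicity, I would take a $G$-invariant measurable set $A \subseteq \sub{G}$ (where $G$ acts by conjugation), and note that for any $g \in G$, $g^{-1}Ag = A$, so $(g\Gamma)\lambda(A) = \lambda(g^{-1}Ag) = \lambda(A)$. Therefore
\begin{align*}
  \eta*\lambda(A) = \int_{G/\Gamma}\lambda(A)\, d\eta(g\Gamma) = \lambda(A).
\end{align*}
Since $\lambda$ is supported on $\sub{\Gamma}$ and since $A \cap \sub{\Gamma}$ is $\Gamma$-invariant (being the intersection of a $G$-invariant set with a $\Gamma$-invariant set), the $\Gamma$-ergodicity of $\lambda$ yields $\lambda(A) \in \{0,1\}$, hence $\eta*\lambda(A) \in \{0,1\}$.

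There is no real obstacle here; the only subtlety is bookkeeping — one must consistently distinguish between the $\Gamma$-invariance of $\lambda$ (which makes the integrand well-defined on cosets) and the $G$-invariance that is gained after averaging against $\eta$. Both properties of $\eta*\lambda$ then follow from essentially a one-line computation, exploiting the $G$-invariance of $\eta$ for the first and the $G$-invariance of $A$ for the second.
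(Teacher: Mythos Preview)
Your proposal is correct. The paper does not actually supply a proof of this claim; it simply states that the claim is straightforward, so your argument is exactly the kind of routine verification the authors had in mind, and there is nothing further to compare.
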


Let $\Gamma$ be a finite index subgroup of $G$, and let $\theta$ be
the hitting measure on $\Gamma$ of the $\mu$ random walk on $G$. Let
$\lambda$ be a $\Gamma$ IRS measure, so that $(B(\sub{\Gamma}), \nu_\lambda)$
is a $(\Gamma,\theta)$ Bowen space. It follows that $(B(\sub{G}),
\nu_{\eta * \lambda})$ is a $(G, \mu)$ Bowen space.  Since $\Gamma$ is
finite index in $G$, every $(G,\mu)$-stationary space is also a
$(\Gamma,\theta)$-stationary space~\cite{furstenberg1971random}. In
particular, $(B(\sub{G}),\nu_{\eta * \lambda})$ is also a $(\Gamma,
\theta)$-stationary space. Furthermore, $(G,\mu)$ and
$(\Gamma,\theta)$ share the same Poisson boundary $(B,\nu)$, and so
there is no ambiguity in referring to the measure $\nu_{\eta
  *\lambda}$, when considering $(B(\sub{G}),\nu_{\eta * \lambda})$ as
either a $(G,\mu)$ Bowen space or a $(\Gamma,\theta)$ Bowen space.

Note that when $\Gamma$ is normal in $G$ then $\eta * \lambda$ is
supported on subgroups of $\Gamma$. In this case
$(B(\sub{\Gamma}),\nu_{\eta * \lambda})$ is both a $(G,\mu)$ and a
$(\Gamma,\theta)$ Bowen space. It may be the case that it is ergodic
with respect to the $G$ action, but not with respect to the $\Gamma$
action.

\subsection{The entropy of Bowen spaces lifted from finite index
  normal subgroups}
\label{sec:entropy-of-lifting}

We now return to consider discrete groups. In particular, let $G$ be a
discrete group with $\Gamma \lhd G$ a finite index normal
subgroup. Let $\mu$ be a generating measure on $G$, and let $\theta$
be the hitting measure on $\Gamma$.  Denote by $\theta_g$ the measure
on $\Gamma$ given by $\theta_g\left(\gamma\right) =
\theta(\gamma^g)$. Note that if $\theta$ has finite first moment then
so does $\theta_g$.  Note also that the entropy of a random variable
drawn from $\theta_g^n$ is independent of $g$, and so
$h_{RW}\left(\Gamma,\theta_g\right)$ is also independent of $g$. It
follows that the entropy of $\pb\left(\Gamma,\theta_g\right)$ is
independent of $g$.

Let $(B,\nu)$ be the Poisson boundary of $(\Gamma,\theta)$. It follows
from the definitions that $(B,g^{-1}\nu)$ is
$(\Gamma,\theta_g)$-stationary, and that furthermore
$h_{\theta_g}\left(B,g^{-1}\nu\right) = h_{\theta}(B,\nu) =
h_{RW}(\Gamma,\theta) = h_{RW}\left(\Gamma,\theta_g\right)$. Finally,
if $\lim_nZ_n\nu$ is a point mass, then $\lim_nZ_n^{g^{-1}}g^{-1}\nu =
\lim_ngZ_n\nu$ is also a point mass, and so $(B,g^{-1}\nu)$ is a
$\left(\Gamma,\theta_g\right)$-boundary. As a maximum entropy
boundary, it is the Poisson boundary of $\left(\Gamma,\theta_g\right)$.

Let $\lambda \in \irs(\Gamma)$, and let $(B(\sub{\Gamma}),
\nu_\lambda)$ be the associated $(\Gamma,\theta)$ Bowen space. Then
it follows from the discussion above that $(B(\sub{\Gamma}),
(g^{-1}\nu)_\lambda)$ is the associated $\left(\Gamma,\theta_g\right)$
Bowen space.

We are now ready to present the following result, which relates the
entropy of a lifted Bowen space $(B(\sub{\Gamma}), \nu_{\eta *
  \lambda})$, to the entropies of the original space, with respect to
the different conjugated measures $\theta_g$.

\begin{lemma}
  \label{lemma:lifting}
  Let $G$ be a discrete group with $\Gamma \lhd G$, $[G:\Gamma] <
  \infty$. Let $\mu$ be a generating measure on $G$, and let $\theta$
  be the hitting measure on $\Gamma$. 

  Let $(B(\sub{\Gamma}), \nu_\lambda)$ be a $\Gamma$ Bowen space. Then
  \begin{align*}
    h_\theta(B(\sub{\Gamma}), \nu_{\eta * \lambda}) = \frac{1}{[G:\Gamma]}
    \sum_{g\Gamma \in G/\Gamma}h_{\theta_g}\left(B(\sub{\Gamma}),\left(g^{-1}\nu\right)_\lambda\right)
  \end{align*}
  and
  \begin{align*}
    h_\mu(B(\sub{\Gamma}), \nu_{\eta * \lambda}) = \frac{1}{[G:\Gamma]^2}
    \sum_{g\Gamma \in G/\Gamma}h_{\theta_g}\left(B(\sub{\Gamma}),\left(g^{-1}\nu\right)_\lambda\right).
  \end{align*}
  
\end{lemma}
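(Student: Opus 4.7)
The plan is to first establish the $h_\theta$ identity by unfolding the definition of $\eta*\lambda$, and then derive the $h_\mu$ identity via Eq.~\ref{eq:abramov}. Since $G/\Gamma$ is finite and $\eta$ is $G$-invariant, $\eta$ is the uniform probability measure on $G/\Gamma$, so
\[
  \eta*\lambda = \frac{1}{[G:\Gamma]}\sum_{g\Gamma\in G/\Gamma} g\lambda.
\]
Applying Eq.~\ref{eq:bowen-space-entropy} to $(B(\sub{\Gamma}),\nu_{\eta*\lambda})$ regarded as a $(\Gamma,\theta)$ Bowen space, substituting this decomposition, and performing the change of variable $K\mapsto gKg^{-1}$ inside each summand yields
\[
  h_\theta\bigl(B(\sub{\Gamma}),\nu_{\eta*\lambda}\bigr) = \frac{1}{[G:\Gamma]}\sum_{g\Gamma}\int_{\sub{\Gamma}} h_\theta\bigl(B_{gKg^{-1}}, \nu_{gKg^{-1}}\bigr) d\lambda(K),
\]
where on the right the fiber entropy is computed with respect to the walk $\theta$ on $\Gamma$.

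The crux is to identify this inner fiber entropy with $h_{\theta_g}(B_K,\nu_K)$, now the fiber entropy for the conjugated walk $\theta_g$ on the unmoved subgroup $K$. For this, observe that the inner automorphism $c_g(\gamma)=g\gamma g^{-1}$ of $\Gamma$ pushes $\theta$ forward to $\theta_g$ and sends $K$ to $gKg^{-1}$; it therefore descends to a measurable bijection of coset spaces $K\backslash\Gamma \to gKg^{-1}\backslash\Gamma$ that intertwines the $\theta_g$-induced Markov chain on the former with the $\theta$-induced chain on the latter. This isomorphism of Markov chains lifts to an isomorphism of their Poisson boundaries and preserves the Radon--Nikodym derivatives appearing in Eq.~\ref{eq:fiber-ent}, so the two fiber entropies coincide. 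A second application of Eq.~\ref{eq:bowen-space-entropy}, this time to the $(\Gamma,\theta_g)$ Bowen space associated with the IRS $\lambda$ whose Poisson boundary measure is $(g^{-1}\nu)_\lambda$ (using the identification of $(B,g^{-1}\nu)$ with $\pb(\Gamma,\theta_g)$ from the discussion preceding the lemma), then reassembles $\int h_{\theta_g}(B_K,\nu_K)d\lambda(K)$ into $h_{\theta_g}(B(\sub{\Gamma}),(g^{-1}\nu)_\lambda)$, yielding the first identity.

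The $h_\mu$ identity is then immediate: $(B(\sub{\Gamma}),\nu_{\eta*\lambda})$ is a $(G,\mu)$-stationary space by Claim~\ref{clm:lifting-irs}, so Eq.~\ref{eq:abramov} gives $h_\theta = [G:\Gamma]\cdot h_\mu$, and dividing the first identity by $[G:\Gamma]$ produces the second. I expect the main obstacle to lie in the conjugation bookkeeping, namely confirming rigorously that $c_g$ provides a measure-theoretic isomorphism of the induced Markov chains (and hence of their Poisson boundaries) that is compatible with the Radon--Nikodym-derivative definition of the fiber entropy; once this is settled, the rest is routine algebraic manipulation.
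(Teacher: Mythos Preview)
Your strategy is correct and mirrors the paper's: decompose $\eta*\lambda$ as a uniform average over cosets, change variables by conjugation in each summand, reassemble, and then pass from $h_\theta$ to $h_\mu$ via Eq.~\ref{eq:abramov}. The one substantive difference is the formula you use to compute the Bowen space entropy. You work with the Furstenberg/fiber-entropy formula Eq.~\ref{eq:bowen-space-entropy}, which forces you to argue that $c_g$ induces a measure-theoretic isomorphism of the induced Markov chains and hence of their Poisson boundaries, compatible with the Radon--Nikodym derivatives in Eq.~\ref{eq:fiber-ent}. The paper instead uses Bowen's Shannon-entropy formula Eq.~\ref{eq:bowen-ent}, for which the conjugation step collapses to the one-line identity
\[
  \Ent{K^gZ_n} = \Ent{gKg^{-1}Z_n} = \Ent{Kg^{-1}Z_ng} = \Ent{KZ_n^{g^{-1}}},
\]
with $Z_n^{g^{-1}}$ a $\theta_g$ random walk. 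This bypasses exactly the ``main obstacle'' you anticipated: no boundary-level isomorphism needs to be checked, only that Shannon entropy is invariant under bijections of the state space. Your route is perfectly valid, just heavier.

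One small bookkeeping slip: $c_g$ pushes $\theta_g$ forward to $\theta$, not $\theta$ to $\theta_g$ (since $(c_g)_*\theta_g(\gamma)=\theta_g(g^{-1}\gamma g)=\theta(\gamma)$). Your subsequent clause about which chain is intertwined with which is nonetheless stated correctly, so the argument goes through.
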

\begin{proof}
  $(B(\sub{\Gamma}), \nu_{\eta * \lambda})$ is both a $(G, \mu)$ and a
  $(\Gamma, \theta)$ Bowen space. Its $\theta$-entropy is given by
  Eq.~\ref{eq:bowen-ent} as
  \begin{align*}
    h_\theta(B(\sub{\Gamma}), \nu_{\eta * \lambda}) = \lim_{n \to
      \infty}\frac{1}{n}\int_{\sub{\Gamma}} \Ent{KZ_n}d(\eta * \lambda)(K),
  \end{align*}
  where $(Z_1,Z_2,\ldots)$ is here a $\theta$ random walk on $\Gamma$.
  We can now rewrite this as
  \begin{align*}
    h_\theta(B(\sub{\Gamma}), \nu_{\eta * \lambda}) &= \lim_{n \to
      \infty}\frac{1}{n}\frac{1}{[G:\Gamma]}\sum_{g\Gamma \in G / \Gamma}\int_{\sub{\Gamma}}
    \Ent{KZ_n}d(g\lambda)(K) \\
    &= \lim_{n \to
      \infty}\frac{1}{n}\frac{1}{[G:\Gamma]}\sum_{g\Gamma \in G/\Gamma}\int_{\sub{\Gamma}}
    \Ent{K^gZ_n}d\lambda(K).
  \end{align*}
  Note that
  \begin{align*}
   \Ent{K^gZ_n} = \Ent{gKg^{-1}Z_n} = \Ent{Kg^{-1}Z_ng} = \Ent{KZ_n^{g^{-1}}},
  \end{align*}
  and so
  \begin{align}
    \label{eq:ent-decompo}
    h_\theta(B(\sub{\Gamma}), \nu_{\eta * \lambda})
    &= \lim_{n \to
      \infty}\frac{1}{n}\frac{1}{[G:\Gamma]}\sum_{g\Gamma \in G/\Gamma}\int_{\sub{\Gamma}}
    \Ent{KZ_n^{g^{-1}}}d\lambda(K).
  \end{align}
  By another application of Eq.~\ref{eq:bowen-ent} we have that
  \begin{align*}
    h_{\theta_g}\left(B(\sub{\Gamma}), (g^{-1}\nu)_\lambda\right) &= \lim_{n \to
      \infty}\frac{1}{n}\int_{\sub{\Gamma}}
    \Ent{KZ_n^{g^{-1}}}d\lambda(K).
  \end{align*}
  Applying this to Eq.~\ref{eq:ent-decompo} yields
  \begin{align*}
    h_\theta(B(\sub{\Gamma}), \nu_{\eta * \lambda}) 
    &= \frac{1}{[G:\Gamma]}\sum_{g\Gamma \in G/\Gamma}h_{\theta_g}\left(B(\sub{\Gamma}),\left(g^{-1}\nu\right)_\lambda\right).
  \end{align*}
  Finally, we apply Eq.~\ref{eq:abramov}, which states that the ratio
  between the $(\Gamma,\theta)$ entropy and the $(G,\mu)$ entropy is
  $[G:\Gamma$]. This yields
  \begin{align*}
    h_\mu(B(\sub{\Gamma}), \nu_{\eta * \lambda}) 
    &= \frac{1}{[G:\Gamma]^2}\sum_{g\Gamma \in G/\Gamma}h_{\theta_g}\left(B(\sub{\Gamma}),\left(g^{-1}\nu\right)_\lambda\right).
  \end{align*}
\end{proof}

\subsection{No entropy gap for virtually free groups}
\label{sec:proof-of-main-thm}

\begin{proof}[Proof of Theorem~\ref{thm:main-realization}]
  Let $G$ be a finitely generated discrete group, and let $G$ have a
  free group of rank $n$ as a finite index subgroup. If $n=1$ then $G$
  is virtually $\Z$ and it follows that $\pb(G,\mu)$ is trivial, for
  any $\mu$. In particular, $G$ has no entropy gap.
  
  Consider then the case that $n \geq 2$.

  We claim that there exists a finite index subgroup $\Gamma$ in $G$
  that is a free group of rank $\geq 4$, and is furthermore normal in
  $G$: If $G$ has $F_2$ or $F_3$ as a finite index subgroup then it
  must also have a higher rank free group $F_n$ as a finite index
  subgroup.  Now, let $\Gamma$ be the normal core of $F_n$ in
  $G$. Then $\Gamma$ is a finite index subgroup of $G$, and, as a
  finite index subgroup of $F_n$, it is also free, and of rank $\geq
  n$. Denote by $\varphi$ a surjective homomorphism from $\Gamma$ to
  $(\Z/2\Z) \wr \Z^3$.
  
  Let $\mu$ be a generating probability measure on $G$, and let
  $\theta$ denote the hitting measure on $F_n$.  Since $\mu$ has
  finite first moment by the claim hypothesis, it follows from
  Lemma~\ref{thm:theta-is-finite-first-moment} that the conjugated
  measure $\theta_g$ has finite first moment, for any $g$.

  Let $\{\lambda_{p,m}\}_{m=1}^\infty$ be a sequence of invariant
  random subgroups of $\Gamma$, such that for any generating
  probability measure $\zeta$ on $\Gamma$ with finite first moment it
  holds that
  $\lim_mh_\zeta\left(B(\sub{\Gamma}),\nu_{\lambda_{p,m}}\right) =
  p \cdot h_\ell(\varphi \Gamma,\varphi_*\zeta) > 0$, as guaranteed by
  Proposition~\ref{lem:freegroup-uniform-realization}. Then by
  Lemma~\ref{lemma:lifting} above
  \begin{align*}
    h_\mu(B(\sub{\Gamma}), \nu_{\eta * \lambda_{p,m}}) 
    &= \frac{1}{[G:\Gamma]^2}\sum_{g\Gamma \in G/\Gamma}h_{\theta_g}\left(B(\sub{\Gamma}),\left(g^{-1}\nu\right)_{\lambda_{p,m}}\right).
  \end{align*}

  Taking the limits of both sides yields
  \begin{align*}
    \lim_{m \to \infty}h_\mu(B(\sub{\Gamma}), \nu_{\eta * \lambda_{p,m}}) 
    &= \frac{p}{[G:\Gamma]^2}\sum_{g\Gamma \in G/\Gamma}h_\ell(\varphi\Gamma,\varphi_*\theta_g).    
  \end{align*}
  Note that by considering only the addend for which $g\Gamma=\Gamma$,
  it follows that
  \begin{align*}
    \lim_{m \to \infty}h_\mu(B(\sub{\Gamma}),
    \nu_{\eta * \lambda_{p,m}}) &\geq
    \frac{p}{[G:\Gamma]^2}h_\ell(\varphi \Gamma, \varphi_*\theta),
  \end{align*}
  and in particular for $m$ large enough the entropy is strictly
  positive.
  
  On the other hand, $h_\ell(\varphi\Gamma,\varphi_*\theta_{g_i}) \leq
  h_{RW}(\Gamma,\theta_{g_i}) = h_{RW}(\Gamma,\theta)$, and so
  \begin{align*}
    \lim_{m \to \infty}h_\theta(B(\sub{\Gamma}), \nu_{\eta *
      \lambda_{p,m}}) &\leq \frac{p}{[G:\Gamma]}h_{RW}(\Gamma,
    \theta),
  \end{align*}
  For every $\eps>0$ there exists a $0 \leq p \leq 1$ such that
  $\frac{p}{[G:\Gamma]}h_{RW}(\Gamma, \theta) <
  \eps$. Therefore, for large enough $m$, we get that
  \begin{align*}
    0 < h_\mu(B(\sub{\Gamma}), \nu_{\eta * \lambda_{p,m}}) < \eps.
  \end{align*}
  Hence for each $\eps>0$ there exists an ergodic $(G, \mu)$
  stationary space with positive entropy that is less than
  $\eps$. We conclude that $(G,\mu)$ has no entropy gap for any
  finite first moment measure $\mu$.
  
\end{proof}

\appendix

\section{Long range percolations on amenable groups}
\label{sec:long-range-percolation}
\begin{proof}[Proof of Lemma~\ref{lemma:long-range-percolation}]

  Let $\{F_m\}_{m=1}^\infty$ be a F{\o}lner sequence in $\Gamma$. For
  each $0 \leq p \leq 1$ and $m \in \N$ we construct a corresponding
  percolation measure $\lambda_{p,m}$ as follows. Let
  $q=(1-p)^{1/|F_m|}$, and let $\alpha$ be the ergodic i.i.d.\
  percolation measure on $\Gamma$ with parameter $q$, so that under
  $\alpha$ each element $\gamma$ is open w.p.\ $q$. Let $\gamma$ be
  open under $\lambda_{p,m}$ if and only if all of the elements in
  $\gamma F_m$ were open under $\alpha$. Let $Q \sim \alpha$ and $R
  \sim \lambda_{p,m}$, so that $\gamma \in R$ if only if $\gamma F_m
  \subseteq Q$.
  
  $\lambda_{p,m}$ is clearly $\Gamma$-invariant. It is ergodic, since
  it is a factor of $\alpha$. Furthermore,
  \begin{align*}
    \P{\mbox{$\gamma$ is open}} = \P{\gamma \in R} = \P{\gamma F_m
      \subseteq Q} = q^{|\gamma F_m|} = q^{|F_m|} = 1-p.
  \end{align*}

  Let $S$ be a finite subset of $\Gamma$, and let $\gamma_0,\gamma \in
  S$. We would like to show that for any $\eps$ there exists an $m$
  large enough for which it holds that the probability that one is
  open and the other not is at most $\eps/|S|$. This, by the union
  bound, will establish the claim. Assume without loss of generality
  that $\gamma_0=e$.

  Since $S$ is finite, for each $\delta$, there exists $m$ large
  enough such that $|F_m \symdiff \gamma F_m| < \delta |F_m|$ for all
  $\gamma \in S$, by the definition of a F{\o}lner sequence. Choose
  $m$ large enough so that $\delta < \eps/|S|$ and also $\delta <
  \frac{\log( 1-\eps/|S|)}{\log (1-p)}$, or $(1-p)^\delta > 1-\eps/|S|$.
  
  Consider first the case that $e$ is open in $\lambda_{p,m}$. Then
  \begin{align*}
    \CondP{\gamma \in R}{e \in R} &= \CondP{\gamma F_m \subseteq Q}{F_m \subseteq
      Q} \\
    &= \CondP{F_m \cap \gamma F_m \subseteq Q, \gamma F_m \setminus F_m \subseteq
      Q}{F_m \subseteq Q}\\
    &= \CondP{\gamma F_m \setminus F_m \subseteq Q}{F_m \subseteq Q}. 
  \end{align*}
  Since $\alpha$ is i.i.d.,
  \begin{align*}
    = q^{|\gamma F_m \setminus
      F_m|} >
    (1-p)^\delta > 1-\eps/|S|.
  \end{align*}

  Consider now the case that $e$ is closed in $\lambda_{p,m}$. Then
  there exists an element $h \in F_m \setminus Q$. Hence, by
  $\Gamma$-invariance, with probability greater than $|F_m \cap \gamma
  F_m|/|F_m|$, this $h$ belongs to $(F_m \cap \gamma F_m) \setminus
  Q$, and in particular to $\gamma F_m \setminus Q$. By definition,
  this implies that $\gamma$ is also closed in $\lambda_{p,m}$. Hence
  \begin{align*}
    \CondP{\gamma \not \in R}{e \not \in R} > \frac{|F_m \cap
      \gamma F_m|}{|F_m|} > 1-\delta > 1-\eps/|S|.
  \end{align*}
\end{proof}

\section{Hitting measures and finite first moments}
\label{sec:hitting-measures-proof}
\begin{lemma*}[\ref{thm:theta-is-finite-first-moment}]
  Let $G$ be a finitely generated group, and let $\Gamma\le G$ with
  $[G:\Gamma]<\infty$. If $\mu\in\mathcal{P}(G)$ has finite first
  moment, then the hitting measure $\theta\in \mathcal{P}(\Gamma)$
  also has finite first moment.
\end{lemma*}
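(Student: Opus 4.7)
The plan is to use Wald's identity together with the fact, cited from~\cite{hartman2012abramov}, that $\E{\tau} = [G:\Gamma] < \infty$.

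Fix a finite symmetric generating set $\mS$ for $G$, and let $|\cdot|_\mS$ denote the associated word length. Let $(X_n)_{n \ge 1}$ be the i.i.d. $\mu$-distributed increments, so that $Z_n = X_1 \cdots X_n$, and let $\tau = \min\{n : Z_n \in \Gamma\}$. The first step is to use subadditivity of the word length to write
\begin{align*}
    |Z_\tau|_\mS \;=\; |X_1 \cdots X_\tau|_\mS \;\le\; \sum_{i=1}^{\tau} |X_i|_\mS.
\end{align*}
Taking expectations, the task reduces to bounding $\E{\sum_{i=1}^\tau |X_i|_\mS}$.

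The random variables $|X_i|_\mS$ are i.i.d., nonnegative, with $\E{|X_1|_\mS} < \infty$ by the finite first moment hypothesis on $\mu$. Moreover, $\tau$ is a stopping time with respect to the natural filtration $\sigma(X_1, \ldots, X_n)$, since the event $\{\tau \le n\} = \{X_1 \cdots X_k \in \Gamma \text{ for some } k \le n\}$ is determined by $X_1, \ldots, X_n$. From \cite{hartman2012abramov} we have $\E{\tau} = [G:\Gamma] < \infty$. Hence Wald's identity applies and yields
\begin{align*}
    \E{\sum_{i=1}^{\tau} |X_i|_\mS} \;=\; \E{\tau} \cdot \E{|X_1|_\mS} \;=\; [G:\Gamma] \cdot \E{|X_1|_\mS} \;<\; \infty.
\end{align*}
Combining with the first display gives $\sum_{\gamma \in \Gamma} \theta(\gamma)\, |\gamma|_\mS = \E{|Z_\tau|_\mS} < \infty$.

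To finish, one must note that the hitting measure $\theta$ lives on $\Gamma$, and ``finite first moment'' for $\theta$ should be phrased using a word length on $\Gamma$ itself, coming from a finite generating set $\mS_\Gamma$ of $\Gamma$ (which exists because finite-index subgroups of finitely generated groups are finitely generated). However, the restriction of $|\cdot|_\mS$ to $\Gamma$ and the intrinsic $\Gamma$-word length $|\cdot|_{\mS_\Gamma}$ are bi-Lipschitz equivalent: one direction is immediate since each generator in $\mS_\Gamma$ has bounded $\mS$-length, and the other direction follows from a standard Schreier-transversal argument using the finite coset space $G/\Gamma$. Thus finite first moment with respect to $|\cdot|_\mS$ transfers to finite first moment with respect to $|\cdot|_{\mS_\Gamma}$, completing the proof. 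No step presents a real obstacle; the only thing to verify carefully is the applicability of Wald's identity (which needs the stopping time property and $\E{\tau} < \infty$, both already in hand) and the metric equivalence at the end.
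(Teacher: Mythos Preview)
Your proof is correct and follows essentially the same approach as the paper: both use subadditivity of the word length together with $\E{\tau}=[G:\Gamma]<\infty$ to bound $\E{|Z_\tau|_\mS}$ by $[G:\Gamma]\cdot\E{|X_1|_\mS}$, and both reduce to the $G$-word metric via the bi-Lipschitz equivalence coming from finite index. The only cosmetic difference is packaging: you invoke Wald's identity directly on the i.i.d.\ sequence $(|X_i|_\mS)$, whereas the paper sets up the submartingale $M_n=nC_1-|Z_n|$ and applies an optional stopping theorem, arriving at the identical bound.
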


By definition, $\theta$ is of finite first moment if
\begin{align}
  \label{eq:theta-finite-moment}
  \sum_{\gamma\in\Gamma}\theta(\gamma)|\gamma|_\mS < \infty
\end{align}
where $\mS$ is some finite symmetric generating set of $\Gamma$. Since
finite index subgroups are quasi-isometric to the group, it is enough
to check the condition in Eq.~\ref{eq:theta-finite-moment} for a word
length metric that is induced by a word length metric of $G$, or,
equivalently, for $\mS$ a finite symmetric generating set of $G$.

Consider the $\mu$ random walk $(Z_1, Z_2, \ldots)$ on $G$.  Fix
$\mS$, a finite symmetric generating set of $G$, and denote
$|g|=|g|_\mS$. Let $L_{n}=|Z_{n}|$.  The first moment of $\mu$ can
be written as $\E{L_1}$, and so $C_1=\E{L_1}<\infty$.

Denote by $\tau$ be the $\Gamma$-hitting time of the $\mu$ random
walk, and recall (Section~\ref{sec:subgroups}) that $\theta$, the hitting
measure, is the law of $Z_\tau$. Then the first moment of the hitting
measure $\theta$ is $\E{L_\tau}$. We therefore need to show that
$\E{L_\tau}<\infty$.

Let $M_n=nC_1-L_{n}$. We want to apply the optional stopping time
theorem on $M_n$. For that we prove the following claim.
\begin{claim}
  \label{claim:submartingale}
  $M_n$ is a submartingale w.r.t.\ the filtration
  $\sigma(Z_{1},\dots,Z_{n})$, and
  \begin{align*}
   \CondE{|M_{n+1}-M_n|}{Z_1,\dots,Z_n} \le 2C_1. 
  \end{align*}
\end{claim}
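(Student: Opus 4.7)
The plan is to unpack the definitions and apply the triangle inequality for word length to the increment $Z_{n+1} = Z_n X_{n+1}$, where $X_{n+1}$ is an independent $\mu$-distributed random variable independent of $\sigma(Z_1,\dots,Z_n)$. The key observation is that $|X_{n+1}|$ controls both the one-step fluctuation of $L_n$ and the one-step fluctuation of $M_n$.

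First I would verify the submartingale property. By the triangle inequality on the Cayley graph of $G$ with respect to $\mS$,
\begin{align*}
  L_{n+1} = |Z_n X_{n+1}| \leq |Z_n| + |X_{n+1}| = L_n + |X_{n+1}|.
\end{align*}
Since $X_{n+1}$ is independent of $\sigma(Z_1,\dots,Z_n)$ and has law $\mu$, taking conditional expectations yields
\begin{align*}
  \CondE{L_{n+1}}{Z_1,\dots,Z_n} \leq L_n + \E{|X_{n+1}|} = L_n + C_1.
\end{align*}
Consequently,
\begin{align*}
  \CondE{M_{n+1}}{Z_1,\dots,Z_n} = (n+1)C_1 - \CondE{L_{n+1}}{Z_1,\dots,Z_n} \geq nC_1 - L_n = M_n,
\end{align*}
so $M_n$ is a submartingale.

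Next I would bound the conditional $L^1$-increments. The reverse triangle inequality gives
\begin{align*}
  |L_{n+1} - L_n| = \bigl||Z_n X_{n+1}| - |Z_n|\bigr| \leq |X_{n+1}|,
\end{align*}
and hence
\begin{align*}
  |M_{n+1} - M_n| = |C_1 - (L_{n+1} - L_n)| \leq C_1 + |X_{n+1}|.
\end{align*}
Taking conditional expectation and again using independence of $X_{n+1}$ from $\sigma(Z_1,\dots,Z_n)$,
\begin{align*}
  \CondE{|M_{n+1} - M_n|}{Z_1,\dots,Z_n} \leq C_1 + \E{|X_{n+1}|} = 2C_1,
\end{align*}
which is the desired bound.

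There is no serious obstacle here: both parts reduce to the triangle inequality for the word length and the fact that $X_{n+1}$ is $\mu$-distributed and independent of the past, so that $\E{|X_{n+1}|} = C_1 < \infty$. The only point worth noting is that we should fix the generating set $\mS$ of $G$ at the outset (as is done before the statement of the claim), since the quasi-isometry remark ensures that finite first moment is well defined regardless of this choice.
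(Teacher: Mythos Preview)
Your proof is correct and follows essentially the same route as the paper: both use the triangle inequality $|Z_nX_{n+1}| \le |Z_n| + |X_{n+1}|$ for the submartingale step and the reverse triangle inequality for the increment bound, then integrate against $\mu$ (you phrase this via independence of $X_{n+1}$, the paper writes out the sum over $g$). The only cosmetic difference is that the paper explicitly invokes the symmetry of $\mS$ to justify $\bigl||Z_ng|-|Z_n|\bigr| \le |g|$, which you use implicitly.
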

\begin{proof}
  By symmetry and the triangle inequality we have that $|gh| \leq
  |g|+|h|$.  Now,
  \begin{align*}
    \CondE{L_{n+1}}{Z_1,\ldots,Z_n} &= \sum_{g \in G}\mu(g)|Z_ng|\\
    &\leq \sum_{g \in G}\mu(g)(|Z_n|+|g|) \\
    &= L_n + C_1,
  \end{align*}
  and so
  \begin{align*}
    \CondE{M_{n+1}}{Z_n} \geq M_n.
  \end{align*}
  Therefore $M_{n}$ is indeed a submartingale. To prove the bound,
  note that
  \begin{align*}
    \CondE{|M_{n+1}-M_n|}{Z_1,\ldots,Z_n} &=
    \CondE{|L_n - L_{n+1}+C_1|}{Z_1,\ldots,Z_n}\\
    &\leq C_1+\CondE{|L_{n+1}-L_n|}{Z_n}.
  \end{align*}
  By the triangle inequality and the symmetry of $\mS$ it follows that
  \begin{align*}    
    &= C_1+\sum_{g \in G}\mu(g)\left||Z_ng|-|Z_n|\right|\\
    &\leq C_1+\sum_{g \in G}\mu(g)|g|\\
    &= 2C_1.\\
  \end{align*}
\end{proof}

\begin{proof}[Proof of Lemma~\ref{thm:theta-is-finite-first-moment}]
  In general, the index of $\Gamma$ in $G$ is equal to the expected
  hitting time~\cite{hartman2012abramov}, and so $\E{\tau} =
  [G:\Gamma] < \infty$. It follows by Theorem (7.5)
  in~\cite{durrett1996probability} that because $M_n$ is a
  submartingale satisfying the condition of
  Claim~\ref{claim:submartingale}, then
  \begin{align*}
    \E{M_\tau} \geq \E{M_1}=0.
  \end{align*}
  Hence
  \begin{align*}
    \E{\tau C_1 - L_\tau} \geq 0
  \end{align*}
  and since $\E{\tau} = [G:\Gamma]$ then
  \begin{align*}
    \E{L_\tau} \leq [G:\Gamma] C_1 < \infty.
  \end{align*}
\end{proof}

\bibliography{realization}
\end{document}